\definecolor{green3}{rgb}{0,0.6,0}
\newcommand{\R}{\mathbb R}
\newcommand{\C}{\mathbb{C}}
\newcommand{\Z}{\mathbb{Z}}
\newcommand{\N}{\mathbb{N}}
\newcommand{\Q}{\mathbb{Q}}
\newcommand{\conv}{{\rm{conv}}}
\newcommand{\mult}{{\rm{mult}}}
\newcommand{\cA}{\mathcal{A}}
\newcommand{\cB}{\mathcal{B}}
\newcommand{\bff}{\mathbf{f}}
\newcommand{\bfg}{\mathbf{g}}
\newcommand{\bfh}{\mathbf{h}}
\newtheorem{proposition}{Proposition}
\newtheorem{theorem}[proposition]{Theorem}
\newtheorem{remark}[proposition]{Remark}
\newtheorem{corollary}[proposition]{Corollary}
\newtheorem{lemma}[proposition]{Lemma}
\newtheorem{example}{Example}
\newenvironment{proof}{
\trivlist \item[\hskip \labelsep\mbox{\it Proof:
}]}{\hfill\mbox{$\square$}
\endtrivlist}
\title{On the multiplicity of isolated roots of sparse polynomial systems\footnote{Partially supported by the following Argentinian grants: PIP 11220130100527CO  CONICET (2014-2016) and UBACYT 2017, 20020160100039BA.}}
\author{Mar\'\i a Isabel Herrero$^{\sharp,\diamond}$, Gabriela Jeronimo$^{\sharp,\dag,\diamond}$, Juan
Sabia$^{\dag,\diamond}$\\[5mm]
{\small $\sharp$ Departamento de Matem\'atica, Facultad de
Ciencias Exactas y
Naturales,} \\[-1mm] {\small Universidad de Buenos Aires, Ciudad
Universitaria, (1428) Buenos Aires, Argentina}\\[2mm]
{\small $\dag$ Departamento de Ciencias Exactas, Ciclo B\'asico
Com\'un,}\\[-1mm]
{\small Universidad de Buenos Aires, Ciudad Universitaria, (1428)
Buenos Aires, Argentina}\\[2mm]
{\small $\diamond$ IMAS, UBA-CONICET, Buenos Aires, Argentina }}
\begin{document}

\date{}
\maketitle

\begin{abstract}
We give formulas for the multiplicity of any affine isolated zero of a generic polynomial system of $n$ equations in $n$ unknowns with prescribed sets of monomials.
First, we consider sets of supports such that the origin is an isolated root of the corresponding generic system and prove formulas for its multiplicity. Then, we apply these formulas to solve the problem in the general case, by showing that the multiplicity of an arbitrary affine isolated zero of a generic system with given supports equals the multiplicity of the origin as a common zero of a generic system with an associated family of supports.

The formulas obtained are in the spirit of the classical Bernstein's theorem, in the sense that they depend on the combinatorial structure of the system, namely, geometric numerical invariants associated to the supports, such as mixed volumes of convex sets and, alternatively, mixed integrals of convex functions.

\bigskip

\textbf{Keywords:} Sparse polynomial systems, Multiplicity of
zeros, Newton polytopes, Mixed volumes and mixed integrals

\textbf{Mathematics Subject Classification:} 13H15, 14Q99, 14C17
\end{abstract}

\section{Introduction}
\label{intro}

The connections between the set of solutions of a polynomial
system and the geometry of the supports of the polynomials
involved have been studied in the literature, starting with the
foundational  work of Bernstein \cite{Ber75}, Kushnirenko
\cite{Kus76} and Khovanskii \cite{Kho78}. They proved that the
number of isolated solutions in $(\C^*)^n$ (where $\C^*:=\C
\setminus \{0\}$) of a system with $n$ polynomial equations in $n$
unknowns is bounded from above by the mixed volume of their
support sets. Afterwards, combinatorial invariants of the same
type also allowed to obtain bounds for the number of isolated
solutions of the system in the affine space $\C^n$ (see, for
example, \cite{Roj94}, \cite{RW96}, \cite{LW96}, \cite{HS97} and
\cite{EV99}). In \cite{PS08b}, another refinement of Bernstein's
bound was given by introducing mixed integrals of concave
functions to estimate the number of isolated solutions in
$\C\times (\C^*)^{n-1}$. Concerning algorithmic complexity,
counting the number of isolated roots is known to be
$\#\mathbf{P}$-complete already for binomial systems \cite{CD07}.
The complexity of counting irreducible components of algebraic
varieties is studied in \cite{BS09}.

Even though the common zeroes of sparse polynomial systems in
$(\C^*)^n$ are generically simple, the isolated roots on
coordinate hyperplanes may generically have high multiplicity. The
aim of this paper is to prove formulas for the multiplicity of the
isolated affine zeroes of generic sparse polynomial systems in
terms of the geometry of their supports. This dependence is
already present in the seminal work of Kushnirenko \cite{Kou76},
where the Milnor number of the singularity at the origin of a
hypersurface is studied.

Several authors have used geometric tools, including convex sets, volumes and covolumes, to solve related problems. Geometric invariants of this type are considered in \cite{Tei88}  to determine  multiplicities of monomial ideals in local rings. In \cite[Chapter 5]{GKZ94}, the multiplicity of a singular point on a toric variety is given as a normalized volume. A particular case of this result is recovered in \cite{CD06}, where the multiplicity of the origin as an isolated zero of a generic unmixed polynomial system is computed under the assumption that each polynomial contains a pure power of each variable.  A generalization of this result to  the mixed case under the same assumption can be found in \cite{KK14}, where the multiplicity of the origin is expressed in terms of mixed covolumes. Recently, in \cite{Mondal16} a formula for the intersection multiplicity at the origin of the hypersurfaces defined in $\C^n$ by $n$ generic polynomials with fixed Newton diagrams is proved.

In this paper, we obtain formulas for the multiplicities of all the affine isolated zeros of a generic polynomial system of $n$ polynomials in $n$ variables with given supports in terms of mixed volumes and, alternatively, in terms of mixed integrals of convex functions associated to the supports of the polynomials involved (see Theorem \ref{teo:MultXiTODO} in Section \ref{sec:multaffine} below).

First, we consider the case of the origin as an isolated zero of a generic system where each polynomial contains a pure power of each variable (see Theorem \ref{teo:MIcasoTocandoEjes} in Section \ref{section:toca ejes}). A formula for the multiplicity of the origin under this particular hypothesis has already been obtained in \cite{KK14} in terms of different invariants. Then, we analyze the case of generic systems with arbitrary supports such that the origin is an isolated zero  (see Proposition \ref{prop:Mgrande} and Corollary \ref{cor:mult0MI} in Section \ref{subsec:generalcase}).

Finally, in order to deal with arbitrary affine isolated zeros, the result in \cite[Proposition 6]{HJS13} enables us to determine all sets $I\subset \{1,\dots, n\}$ such that a generic system with the given supports has isolated zeros whose vanishing coordinates are indexed by $I$.
For such an isolated zero, we prove that its multiplicity equals the multiplicity of the origin as an isolated zero of an associated generic sparse system of $\#I$ polynomials in $\#I$ variables whose supports can be explicitly defined from the input supports and the set $I$ (see Theorem \ref{teo:multF=multG} in Section \ref{sec:multaffine}). Thus, a formula for the multiplicity of an arbitrary affine zero of the system follows from our previous result concerning the multiplicity of the origin.

Our formulas for the multiplicity of the origin can be seen as a generalization of those in \cite{KK14}, in the sense that the only hypotheses on the supports we make are the necessary ones, proved in \cite[Proposition 6]{HJS13}, so that the origin is an isolated zero of a generic system with the given supports.  An earlier approach from \cite{Mondal16} to compute the multiplicity of the origin under no further assumptions on the supports leads to a formula which, unlike ours, is not symmetric in the input polynomials, as already stated by the author. Furthermore, in this paper we give formulas for the multiplicity of arbitrary isolated affine zeroes of a generic sparse system.

The paper is organized as follows: Section \ref{sec:prelim}  recalls the definitions and basic properties of mixed volumes and mixed integrals, and describes the algorithmic approach to compute multiplicities of isolated zeros of polynomial systems by means of basic linear algebra given in \cite{DZ05}, which we use as a tool. In Section \ref{sec:mult0}, formulas for the multiplicity of the origin are obtained, first for systems where each polynomial contains a pure power of each variable and then, in the general case. Finally, Section \ref{sec:multother} is devoted to computing the multiplicity of an arbitrary affine isolated zero of a generic system.

\section{Preliminaries} \label{sec:prelim}

\subsection{Mixed volume and stable mixed volume}\label{subsec:SMV}

Let $\cA_1,\dots, \cA_n$ be finite subsets of $(\Z_{\ge 0})^n$.  A \emph{sparse polynomial system supported
on $\cA = (\cA_1,\dots, \cA_n)$} is given by polynomials
$$f_j =
\sum_{a\in \cA_j} c_{j,a} x^a$$ in the variables $x = (x_1,\dots, x_n)$, with $c_{j,a} \in \C \setminus \{0\}$
for each $ a \in \cA_j$ and $1\le  j \le n$.

We denote by $MV_n(\cA) = MV_n(\cA_1,\dots, \cA_n)$ the \emph{mixed volume} of
the convex hulls of $\cA_1,\ldots,\cA_n$ in $\R^{n}$, which is defined as
$$MV_n(\cA)= \sum_{J\subset \{ 1,\dots, n\}} (-1)^{n-\#J}\, Vol_{n} \Big(\sum_{j\in J} \text{conv}(\cA_j)\Big)$$
 (see, for example, \cite[Chapter 7]{CLO}). The mixed volume of $\cA$ is an upper
bound for the number of isolated roots in $(\C^*)^n$ of a sparse
system supported on $\cA$  (see \cite{Ber75}).

The \emph{stable mixed volume} of $\cA=(\cA_1,\dots, \cA_n)$, denoted by
$SM_n(\cA) = SM_n(\cA_1,\dots, \cA_n)$, is introduced in \cite{HS97} to estimate the number of isolated roots in $\C^n$ of a sparse polynomial system supported on
$\cA$ and is defined as follows. Let $\cA^{0} = (\cA_1^{0}, \dots, \cA_n^{0})$ be the family with
$\cA_j^{0} := \cA_j \cup \{0\}$ for every $1\le j \le n$, and let  $\omega^{0} =
(\omega^{0}_1, \dots, \omega^{0}_n)$ be the lifting function for $\cA^{0}$
defined by $\omega^{0}_j(q) = 0$ if $q \in \cA_j$ and
$\omega^{0}_j(0)= 1$ if $0 \notin \cA_j$.
Consider the polytope $Q^0$ in $\R^{n+1}$ obtained by taking the Minkowski (pointwise) sum of the convex hulls of the graphs of $\omega_1^0, \dots, \omega_n^0$. The projection of the lower facets of $Q^0$ (that is, the $n$-dimensional faces with inner normal vector with a positive last coordinate) induces a subdivision of $\cA^0$. A cell $C= (C_1,\dots, C_n)$, with $C_j \subset \cA_j^0$ for every $1\le j \le n$, of this subdivision is said to be \emph{stable} if it corresponds to a facet of $Q^0$ having an inner normal vector with all non-negative coordinates.
The stable mixed volume $SM_n(\cA_1,\dots, \cA_n)$ is  the sum of the mixed volumes of all the stable cells in the subdivision of $\cA^{0}$.

Note that $\cA= (\cA_1,\dots, \cA_n)$ is a stable cell in the defined subdivision of $\cA^0$, namely, the cell with associated inner normal vector $(0,\dots, 0,1)$; therefore, we have that
$$MV_n(\cA_1, \dots, \cA_n)\le SM_n(\cA_1, \dots, \cA_n)\le
MV_n(\cA_1\cup\{0\}, \dots, \cA_n\cup\{0\}).$$

\subsection{Mixed integrals for concave and convex functions}
\label{section:general}

Let $P_1,\dots, P_n$ be polytopes in $\R^{n-1}$, and, for $1\le j \le n$, let $\sigma_j: P_j\to \R$ be a concave function and $\rho_j: P_j \to \R$ a convex function.
Following \cite{PS08a}, we can define concave (respectively convex)
functions as:
$$\begin{array}{l}\sigma_i\boxplus\sigma_j: P_i+P_j \to \R, \\
\sigma_i\boxplus\sigma_j(x)=\max\{ \sigma_i(y)+\sigma_j(z)  :  y
\in P_i,\, z \in P_j,\, y+z=x\}\end{array}
$$
and
$$\begin{array}{l} \rho_i\boxplus'\rho_j: P_i+P_j \to \R, \\
\rho_i\boxplus'\rho_j(x)=\min\{ \rho_i(y)+\rho_j(z)  :  y
\in P_i,\, z \in P_j,\, y+z=x\}.\end{array}$$
Note that $\rho_i\boxplus'\rho_j=
-(-\rho_i)\boxplus(-\rho_j)$.

In the same way, for every non-empty subset $J\subset\{ 1,\dots, n\}$, we can define $$\boxplus_{j\in J}\sigma_j: \sum_{j\in J} P_j  \to \R \quad \hbox{ and }\quad \boxplus'_{j\in J}\rho_j: \sum_{j\in J} P_j  \to \R.$$
The mixed integrals of $\sigma_1, \dots, \sigma_n$ (respectively, $\rho_1, \dots, \rho_n$)
are defined as:
$$MI_n(\sigma_1, \dots, \sigma_n)= \sum_{k=1}^n(-1)^{n-k}
\sum_{J \subset \{1, \dots, n\} \atop \#J=k} \int_{\sum_{j \in
J}P_j}\boxplus_{j\in J} \sigma_j(x)dx, $$
$$MI_n'(\rho_1, \dots, \rho_n)= \sum_{k=1}^n(-1)^{n-k}
\sum_{J \subset \{1, \dots, n\} \atop \#J=k} \int_{\sum_{j \in
J}P_j}\boxplus'_{j\in J}\rho_j(x)dx.$$

For a polytope $P\subset \R^{n-1}$, a convex function $\rho : P \to \R$ and a concave function $\sigma: P \to \R$ such that $\rho(x) \le \sigma(x)$ for every $x\in P$, we denote
$$P_{\rho, \sigma} = \mbox{conv}(\{(x, \rho(x)) : x\in P\} \cup \{(x, \sigma(x)) : x\in P\}).$$

Given a polytope $Q\subset \R^n$, if $\pi:\R^n \rightarrow \R^{n-1}$ is the projection to the first $n-1$ coordinates, we may define a concave function
$\sigma_Q:\pi(Q) \rightarrow \R$ and a convex function
$\rho_Q:\pi(Q) \rightarrow \R$  as:
$$ \sigma_Q(x)=\max\{x_n \in \R : (x,x_n) \in Q\} \ \hbox{ and } \
\rho_Q(x)=\min\{x_n \in \R : (x,x_n) \in Q\}.$$

\begin{remark} \label{rem:def x piso y techo} The functions $\sigma_Q$ and $\rho_Q$ defined above parameterize the lower and
upper envelopes of $Q$ respectively. Moreover, $\pi(Q)_{\rho_Q, \sigma_Q} = Q$.
\end{remark}

Let $Q_1,\dots, Q_n$ be polytopes in $\R^n$. For $1\le j \le n$, let $\sigma_j = \sigma_{Q_j}$ and $\rho_j = \rho_{Q_j}$. Let $J\subset \{1,\dots, n\}$, $J \ne \emptyset$. Then, $\boxplus_{j\in J} \sigma_j:\sum_{j\in J}\pi(Q_j) \to \R $ and $\boxplus'_{j\in J} \rho_j: \sum_{j\in J}\pi(Q_j) \to \R$ parameterize the upper and lower envelopes of $\sum_{j\in J} Q_j$ respectively.

\subsection{Multiplicity matrices}\label{sec:DZ}

In order to compute multiplicities of isolated zeros of polynomial systems, we will follow the algorithmic approach from \cite{DZ05} based on duality theory, which we briefly recall in this section.

Let $\bff = (f_1,\dots, f_n)$ be a system of polynomials in $\C[x_1,\dots, x_n]$. Denote $\mathcal{I}$ the ideal of $\C[x]=\C[x_1,\dots, x_n]$ generated by $f_1, \dots, f_n$.

For an isolated zero $\zeta\in \C^n$  of the system $\bff$, we denote $\mult_\zeta(\bff)$ its multiplicity, defined as the dimension (as a $\C$-vector space) of the local ring $\C[x]_{\mathfrak{m}_\zeta}/\mathcal{I}\C[x]_{\mathfrak{m}_\zeta}$, where $\mathfrak{m}_\zeta = (x_1 - \zeta_1,\dots, x_n- \zeta_n)$ is the maximal ideal associated with $\zeta$ (see, for instance, \cite[Chapter 4, Definition (2.1)]{CLO}).

Let $\mathcal{D}_\zeta(\mathcal{I})$ the dual space of the ideal $\mathcal{I}$ at $\zeta$; namely, the vector space
$$\mathcal{D}_\zeta(\mathcal{I})= \Big\{c=\sum_{\alpha \in (\Z_{\ge 0})^n}c_\alpha\, \partial_\alpha[\zeta] \mid  c(f)=0 \mbox{ for all } f \in \mathcal{I}\Big\},$$ where, for every $\alpha =(\alpha_1,\dots, \alpha_n)\in (\Z_{\ge 0})^n$,  $c_\alpha\in \C$,
\begin{equation}\label{eq:deralpha}
\partial_\alpha = \dfrac{1}{\alpha_1! \dots \alpha_n!} \dfrac{\partial^{|\alpha|}}{\partial x_1^{\alpha_1}\dots, x_n^{\alpha_n}},
\end{equation}
and
$$\partial_\alpha[\zeta] : \C[x] \to \C, \quad \partial_\alpha[\zeta](f) = (\partial_\alpha f)(\zeta).$$
The dimension of $\mathcal{D}_\zeta(\mathcal{I})$ equals the multiplicity of $\zeta$ as a zero of $\mathcal{I}$ (see \cite{Macaulay1916}, \cite{Stetter04}).

For
every $k \ge 0$, consider the subspace
$$\mathcal{D}_\zeta^k(\mathcal{I})=\Big\{c =\sum_{\alpha \in (\Z_{\ge 0})^n, \ |\alpha| \le k }c_\alpha\, \partial_\alpha [\zeta] \mid  c(f)=0 \mbox{ for all } f \in \mathcal{I}\Big\}$$
of all functionals in $\mathcal{D}_\zeta(\mathcal{I})$ with differential order bounded by $k$.
Since $\zeta$ is an isolated common zero of $\mathcal{I}$, there exists $k_0 \in
\Z_{\ge 0}$ such that $\mathcal{D}_\zeta(\mathcal{I})=\mathcal{D}_\zeta^{k_0}(\mathcal{I}) =
\mathcal{D}_\zeta^k(\mathcal{I})$ for all $k\ge k_0$ and $\dim (\mathcal{D}_\zeta^k(\mathcal{I}))<\dim(\mathcal{D}_\zeta^{k+1}(\mathcal{I}))$ for every $0 \le k < k_0$ (see \cite[Lemma 1]{DZ05}).

Following \cite[Section 4]{DZ05}, the dimension of the vector spaces $\mathcal{D}_\zeta^k(\mathcal{I})$ can be
computed by means of the \emph{multiplicity matrices}, defined as follows. For $k=0$, set $S_0(\bff, \zeta) =[f_1(\xi) \cdots f_n(\xi)]^t = 0\in \C^{n\times 1}$. Take $\prec$ a graded monomial ordering. For $k\ge 1$, consider the sets
$\mathbb{I}_k= \{\alpha \in
(\Z_{\ge0})^n \mid |\alpha| \le k\}$ ordered by $\prec$, and $\mathbb{I}_{k-1}\times\{1, \dots, n\}$ with the ordering $(\beta,j)\prec (\beta',j')$ if $ \beta\prec \beta'$ or
$\beta=\beta'$ and $j<j'$.
Let $S_k(\bff, \zeta)$ be the $
\binom{k-1+n}{k-1}n \times \binom{k+n}{k}$ matrix whose columns
are indexed by $\mathbb{I}_k$ (corresponding to the differential functionals $\partial_\alpha$ for
$\alpha \in \mathbb{I}_k$) and whose rows are indexed by  $(\beta,j)\in \mathbb{I}_{k-1}\times\{1, \dots, n\}$ (corresponding to the polynomials $(x-\zeta)^\beta f_j$) such that the entry at the intersection of the row indexed by $(\beta, j)$ and the column indexed by $\alpha$ is
$$(S_k(\bff, \zeta))_{(\beta, j), \alpha} = \partial_\alpha((x-\zeta)^\beta f_j)(\zeta).$$
(Here, $(x-\zeta)^\beta = (x_1 - \zeta_1)^{\beta_1}\cdots (x_n-\zeta_n)^{\beta_n}$.)
Then, the dimension of
$\mathcal{D}_\zeta^k(\mathcal{I})$ equals the dimension of the nullspace of $S_k(\bff, \zeta)$
(see \cite[Theorems 1 and 2]{DZ05}).
As a consequence:

\begin{proposition} With the previous assumptions and notation,  if $$k_0= \min\{ k\in \Z_{\ge 0} \mid \dim(\ker(S_k(\bff, \zeta))) = \dim(\ker(S_{k+1}(\bff, \zeta)))\},$$ the multiplicity of $\zeta$ as an isolated zero of $\bff$ is $\mult_\zeta(\bff) = \dim(\ker(S_{k}(\bff, \zeta)))$ for any $k\ge k_0$.
\end{proposition}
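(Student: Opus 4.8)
The plan is to assemble the two ingredients recalled just before the statement. On one hand, by \cite[Theorems 1 and 2]{DZ05} one has $\dim(\ker(S_k(\bff, \zeta))) = \dim(\mathcal{D}_\zeta^k(\mathcal{I}))$ for every $k\ge 0$; on the other hand, by \cite[Lemma 1]{DZ05} there is an integer $k_*\in\Z_{\ge 0}$ such that the ascending chain $\mathcal{D}_\zeta^0(\mathcal{I}) \subseteq \mathcal{D}_\zeta^1(\mathcal{I}) \subseteq \cdots$ satisfies $\dim(\mathcal{D}_\zeta^k(\mathcal{I})) < \dim(\mathcal{D}_\zeta^{k+1}(\mathcal{I}))$ for every $0\le k < k_*$, and $\mathcal{D}_\zeta^k(\mathcal{I}) = \mathcal{D}_\zeta(\mathcal{I})$ for every $k\ge k_*$; recall also that $\dim(\mathcal{D}_\zeta(\mathcal{I})) = \mult_\zeta(\bff)$ by the duality statement of Macaulay. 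The whole argument then consists in showing that the integer $k_0$ defined in the statement coincides with $k_*$, and reading off the multiplicity from there.

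First I would set $d_k := \dim(\ker(S_k(\bff, \zeta))) = \dim(\mathcal{D}_\zeta^k(\mathcal{I}))$. Since any functional of differential order at most $k$ also has order at most $k+1$, the inclusion $\mathcal{D}_\zeta^k(\mathcal{I}) \subseteq \mathcal{D}_\zeta^{k+1}(\mathcal{I})$ holds, so $d_k \le d_{k+1}$ for all $k$. Combining this with \cite[Lemma 1]{DZ05} as quoted above, the sequence $(d_k)_{k\ge 0}$ is strictly increasing for $0\le k < k_*$ and constant (equal to $\dim(\mathcal{D}_\zeta(\mathcal{I}))$) for $k\ge k_*$.

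Next I would identify $k_0$ with $k_*$: for every $k<k_*$ we have $d_k < d_{k+1}$, hence $d_k\ne d_{k+1}$, so no such $k$ belongs to the set $\{k\in\Z_{\ge 0} \mid d_k = d_{k+1}\}$; while $d_{k_*} = d_{k_*+1}$ shows $k_*$ does belong to it. Therefore $k_0 = \min\{k\in\Z_{\ge 0} \mid d_k = d_{k+1}\} = k_*$. Finally, for every $k\ge k_0 = k_*$ one has $\mathcal{D}_\zeta^k(\mathcal{I}) = \mathcal{D}_\zeta(\mathcal{I})$, so
$$\dim(\ker(S_k(\bff, \zeta))) = d_k = \dim(\mathcal{D}_\zeta^k(\mathcal{I})) = \dim(\mathcal{D}_\zeta(\mathcal{I})) = \mult_\zeta(\bff),$$
which is the asserted equality.

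I do not expect any real obstacle here: the statement is essentially a bookkeeping reformulation of \cite[Lemma 1]{DZ05} and \cite[Theorems 1 and 2]{DZ05}. The only point that deserves an explicit sentence is that a single coincidence $d_{k_0} = d_{k_0+1}$ already forces the chain to be stationary from $k_0$ on (so that $d_k = \mult_\zeta(\bff)$ for all $k\ge k_0$, not merely for $k=k_0$); this is exactly what the strict-monotonicity-below-$k_*$ part of \cite[Lemma 1]{DZ05} guarantees, since it pins down $k_*$ as the unique index where the first repetition occurs.
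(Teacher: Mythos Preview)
Your proposal is correct and follows exactly the approach the paper intends: in the paper the proposition is stated without proof, simply as ``a consequence'' of the facts recalled immediately before it (namely that $\dim(\ker(S_k(\bff,\zeta)))=\dim(\mathcal{D}_\zeta^k(\mathcal{I}))$ by \cite[Theorems 1 and 2]{DZ05} and the stabilization property of \cite[Lemma 1]{DZ05}). Your argument is precisely the straightforward unpacking of that consequence, including the identification $k_0=k_*$.
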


\section{Multiplicity of the origin} \label{sec:mult0}

Consider a family $\cA=(\cA_1, \dots, \cA_n)$ of finite sets in
$(\Z_{\ge 0})^n$ such that $0 \not\in \cA_j $ for all $ 1 \le j \le n$.
Under this assumption, $0\in \C^n$ is a common zero of any sparse system of polynomials $f_1,\dots,
f_n\in \C[x_1, \dots, x_n]$ supported on $\cA$.

We are interested in the case when $0$ is an \emph{isolated} common zero of the system.
By \cite[Proposition 6]{HJS13}, for a \emph{generic} family of polynomials $\mathbf{f} = f_1,\dots,
f_n\in \C[x_1, \dots, x_n]$ supported on $\cA$, we have that $0$ is an isolated point of $V(\mathbf{f})$ if and only if
$ \#I+\#J_I\ge n $ for all $I \subset \{1, \dots,
n\},$ where $J_I$ is the set of subindexes of all polynomials
that do not vanish when we evaluate $x_i=0$ for all $i \in I$.

Every $\mathbf{c} = (\mathbf{c}_1, \dots, \mathbf{c}_n)\in \C^{\#\cA_1}\times \dots \times \C^{\#\cA_n}$ defines a system $\bff_{\mathbf{c}}$ of polynomials with coefficients $\mathbf{c}$ supported on a family of subsets of $\cA_1,\dots, \cA_n$. If $0$ is an isolated zero of $\bff_{\mathbf{c}}$, we define $\mult_{\cA}(\mathbf{c}) :=\mult_0(\bff_{\mathbf{c}})\in \Z_{>0}$.

\begin{lemma}\label{lem:multgen} Under the previous assumptions and notation, let $\mu_\cA$ be the minimum of the function $\mult_{\cA}$. Then,
$\{ \mathbf{c}\in \C^{\#\cA_1}\times \dots \times \C^{\#\cA_n} \mid \mult_{\cA}(\mathbf{c})= \mu_\cA\}$ contains a non-empty Zariski open set of $\C^{\#\cA_1}\times \dots \times \C^{\#\cA_n}$.
\end{lemma}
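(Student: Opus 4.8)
The plan is to translate the statement into the Zariski semicontinuity of the rank of a \emph{fixed} multiplicity matrix. Write $N=\#\cA_1+\dots+\#\cA_n$ for the dimension of the coefficient space $\C^N$, which is irreducible, and for $\mathbf c\in\C^N$ let $\mathcal I_{\mathbf c}\subset\C[x]$ be the ideal generated by $\bff_{\mathbf c}$. Recall from the discussion preceding the statement that, by \cite[Proposition 6]{HJS13} --- under the standing hypotheses, which place us in the case where $0$ is an isolated zero of a generic system supported on $\cA$ --- there is a non-empty Zariski open set $\cU_0\subseteq\C^N$ such that $0$ is an isolated point of $V(\bff_{\mathbf c})$ for every $\mathbf c\in\cU_0$. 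In particular $\mult_\cA$ is a function taking values in $\Z_{>0}$ whose domain contains $\cU_0$, so $\mu_\cA=\min\mult_\cA$ exists.

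The first step I would carry out is to fix the differential order uniformly. For $\mathbf c\in\cU_0$ a B\'ezout-type estimate bounds the sum of the multiplicities of the isolated zeros of $\bff_{\mathbf c}$ in $\C^n$, hence $\mult_\cA(\mathbf c)=\mult_0(\bff_{\mathbf c})\le M$ with $M$ independent of $\mathbf c$ (one may take $M=\prod_{j=1}^n\max\{|a|:a\in\cA_j\}$). By \cite[Lemma 1]{DZ05}, the strict chain $1=\dim\mathcal D^0_0(\mathcal I_{\mathbf c})<\dim\mathcal D^1_0(\mathcal I_{\mathbf c})<\dots<\dim\mathcal D^{k_0(\mathbf c)}_0(\mathcal I_{\mathbf c})=\mult_\cA(\mathbf c)$ then forces $k_0(\mathbf c)<M$ for every $\mathbf c\in\cU_0$. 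Fix $K=M$. By the description of multiplicity via the kernels of the multiplicity matrices recalled in Section~\ref{sec:DZ} (i.e.\ \cite[Theorems 1 and 2]{DZ05}), we obtain $\mult_\cA(\mathbf c)=\dim\ker S_K(\bff_{\mathbf c},0)$ for every $\mathbf c\in\cU_0$, while for an arbitrary $\mathbf c'$ in the domain of $\mult_\cA$ one always has $\dim\ker S_K(\bff_{\mathbf c'},0)=\dim\mathcal D^K_0(\mathcal I_{\mathbf c'})\le\dim\mathcal D_0(\mathcal I_{\mathbf c'})=\mult_\cA(\mathbf c')$.

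The second step is the semicontinuity argument. Since each entry $\partial_\alpha\big((x^\beta f_j)\big)(0)$ of $S_K(\bff_{\mathbf c},0)$ is a polynomial in $\mathbf c$ (in fact a coordinate of $\mathbf c$, or $0$), the function $\mathbf c\mapsto\operatorname{rank}S_K(\bff_{\mathbf c},0)$ is lower semicontinuous on $\C^N$ for the Zariski topology; it attains its maximum $r^\ast$ on a non-empty Zariski open set $W\subseteq\C^N$, so $m_K:=\binom{K+n}{n}-r^\ast$ satisfies $\dim\ker S_K(\bff_{\mathbf c},0)\ge m_K$ for all $\mathbf c\in\C^N$, with equality exactly on $W$. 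As $\C^N$ is irreducible, $W\cap\cU_0$ is a non-empty Zariski open set; on it $\mult_\cA(\mathbf c)=\dim\ker S_K(\bff_{\mathbf c},0)=m_K$, and by the last inequality of the previous step $\mult_\cA(\mathbf c')\ge m_K$ for every $\mathbf c'$ in the domain of $\mult_\cA$. Hence $\mu_\cA=m_K$ and $\{\mathbf c\in\C^N:\mult_\cA(\mathbf c)=\mu_\cA\}\supseteq W\cap\cU_0$, which is a non-empty Zariski open set.

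I expect the genuine obstacle to be exactly the passage from a single matrix $S_k$ to the multiplicity: a priori the stabilization index $k_0$ varies with $\mathbf c$, so lower semicontinuity of $\dim\ker S_k$ for one fixed $k$ is not by itself enough. What makes everything work is the uniform bound $k_0(\mathbf c)<M$ on the Zariski dense set $\cU_0$, obtained from a uniform bound on $\mult_\cA$; one should check carefully that such a bound is available under the sole hypothesis $0\notin\cA_j$ (the B\'ezout number above does the job, and one could alternatively use an intrinsic bound such as the stable mixed volume $SM_n(\cA)$). A secondary point, harmless once noticed, is that $\mu_\cA$ is the minimum over the full domain of $\mult_\cA$, which may contain coefficient vectors outside $\cU_0$ (with some $c_{j,a}=0$ but $0$ still isolated); the inequality $\mult_\cA(\mathbf c')\ge m_K$ covers these cases.
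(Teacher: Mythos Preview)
Your argument is correct and is precisely a detailed elaboration of the paper's one-line proof, which merely says the result ``is straightforward, for example, from the computation of multiplicities by using multiplicity matrices.'' You supply exactly the details the paper omits: the uniform bound on the stabilization index via a B\'ezout-type estimate, and the lower semicontinuity of the rank of the fixed matrix $S_K$ in the coefficients.
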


\begin{proof}
It is straightforward, for example, from the computation of multiplicities by using multiplicity matrices (see Section \ref{sec:DZ}).
\end{proof}

In this sense, we may speak of $\mu_\cA$ as the multiplicity  of $0$ as an isolated root of a \emph{generic} sparse system supported on $\cA$. Explicit conditions on the coefficients satisfying $\mult_{\cA}(\mathbf{c})= \mu_\cA$ are given in \cite[Theorem 4.12]{Mondal16}.

\bigskip

Therefore, in this section, we will focus on the computation of the multiplicity of the origin
as a common zero of a generic polynomial system supported on $\cA =(\cA_1,\dots, \cA_n)$, under the following assumptions:
\begin{itemize}
\item[(H1)] $0\notin \cA_j \subset (\Z_{\ge 0})^n$ for every $1\le j \le n$;
\item[(H2)] for all $I \subset \{1,\dots, n\}$, if $J_I:= \{ j\in \{1,\dots, n\} \mid \exists a\in \cA_j : a_i = 0 \, \forall i\in I\}$, then $ \#I+\#J_I\ge n $.
\end{itemize}

Moreover, in \cite[Proposition 5]{HJS13}, these conditions are proved to be equivalent to the fact that,  for a generic system $\mathbf{f}$ supported on $\cA$ and vanishing at $0\in \C^n$, the variety $V(\mathbf{f})$ consists only of isolated points in $\C^n$.

Under these assumptions, by \cite[Theorem 2]{HS97}, the number of common zeros of $\bff$ in $\C^n$ counted with multiplicities is the stable mixed volume
$SM_n(\cA)$.
In particular, since the number of common zeros of the system in $(\C^*)^n$ is the mixed volume $MV_n(\cA)$ (see \cite{Ber75}), we have that
\begin{equation} \label{eq:cotamult}
\mult_0(\bff) \le SM_n(\cA) - MV_n(\cA) \le MV_n(\cA^0) - MV_n(\cA),
\end{equation}
where $\cA^0 = (\cA_1\cup \{0\},\dots, \cA_n \cup \{ 0 \})$.

\subsection{A particular case}\label{section:toca ejes}

The first case we are going to consider is when the following stronger assumption on $\cA$ holds:

\begin{itemize}
\item[(H3)] For every $ 1
\le i, j \le n$, there exists $\mu_{ij} \in \N$ such that $\mu_{ij}e_i \in \cA_j$, where $e_i $ is the $i$th vector of the canonical basis of $\Q^n$.
\end{itemize}

Note that assumption (H3) implies that assumption (H2) holds.

\medskip

Under condition (H3), in \cite[Theorem 7.6]{KK14} the multiplicity of the origin as an isolated common zero
of a generic  polynomial system supported on $\cA$ is computed in terms of covolumes of coconvex bodies associated
to $\cA$.
Here, we will first re-obtain this result by  proving a formula using mixed volumes of convex polytopes and then, we will reformulate this formula in terms of mixed integrals of convex functions.

\medskip

We start by comparing stable mixed volumes with mixed volumes in our particular setting.

\begin{lemma} \label{lem:SMvsMV0} With the previous notation, if assumptions (H1) and (H3) hold, we have that $$SM_n(\cA_1, \dots,
\cA_n)=MV_n(\cA_1^0, \dots, \cA_n^0).$$
\end{lemma}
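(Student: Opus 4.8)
The goal is to show that, under (H1) and (H3), the stable mixed volume $SM_n(\cA)$ equals the full mixed volume $MV_n(\cA^0)$. Since the chain of inequalities recalled in Section~\ref{subsec:SMV} already gives $SM_n(\cA)\le MV_n(\cA^0)$, the whole content is the reverse inequality $SM_n(\cA)\ge MV_n(\cA^0)$. My plan is to prove this by analyzing the mixed subdivision of $\cA^0$ induced by the lifting $\omega^0$ and showing that, thanks to (H3), \emph{every} cell of that subdivision is stable; then the stable mixed volume, being the sum of the mixed volumes of the stable cells, is the sum over \emph{all} cells, which by the additivity of mixed volume along a coherent mixed subdivision equals $MV_n(\cA^0)$.

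\textbf{Key steps.} First I would recall that $\omega^0$ lifts every point of $\cA_j$ to height $0$ and the extra point $0$ (when $0\notin\cA_j$) to height $1$, so the lower hull of $Q^0=\sum_j\mathrm{conv}(\mathrm{graph}(\omega^0_j))$ projects to a coherent mixed subdivision of $\cA^0=(\cA_1^0,\dots,\cA_n^0)$, and $MV_n(\cA^0)=\sum_{C}MV_n(C)$ over the cells $C$ of this subdivision (standard additivity of mixed volumes over a mixed subdivision; this is exactly how the stable mixed volume is set up in \cite{HS97}). Second, I would show each lower facet of $Q^0$ has an inner normal vector $\nu=(\nu_1,\dots,\nu_n,\nu_{n+1})$ with $\nu_{n+1}>0$ and \emph{all $\nu_i\ge 0$}: suppose some $\nu_i<0$. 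The facet corresponds to a cell $C=(C_1,\dots,C_n)$ with $C_j\subset\cA_j^0$, and each $C_j$ minimizes the linear functional $\langle\nu,(a,\omega^0_j(a))\rangle$ over $\cA_j^0$. By (H3) there is a pure power $\mu_{ij}e_i\in\cA_j$, lifted to height $0$; its value under $\nu$ is $\nu_i\mu_{ij}<0$, which is strictly smaller than the value $0$ at any lifted point of $\cA_j$ with $i$-th coordinate $0$ and strictly smaller than the value $\nu_{n+1}>0$ at the lifted origin — more care is needed, but the point is that a negative $\nu_i$ forces the minimizing face of the $j$-th summand to sit at large $i$-th coordinate, and doing this \emph{simultaneously for all $j$ and all such $i$} together with $\nu_{n+1}>0$ contradicts the face being $n$-dimensional / a genuine lower facet, or more simply forces the cell to be lower-dimensional. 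I would make this precise by arguing directly that if $\nu_i<0$ for some $i$, then along the edge of $Q^0$ in direction $e_i$ (available because each $\mathrm{conv}(\cA_j^0)$ extends in direction $e_i$ by (H3)) one can strictly decrease $\langle\nu,\cdot\rangle$ while staying in the lower hull, contradicting that $\nu$ supports a facet. Hence $\nu_i\ge0$ for all $i$, so every cell is stable. Third, combining with the additivity from step one yields $SM_n(\cA)=\sum_{C\text{ stable}}MV_n(C)=\sum_{C}MV_n(C)=MV_n(\cA^0)$, completing the proof together with the known upper bound.

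\textbf{Main obstacle.} The delicate point is step two: cleanly ruling out a negative coordinate in the inner normal of a lower facet. One has to handle the interaction between the two lifting heights ($0$ on $\cA_j$, $1$ on the added origin) and use (H3) for the \emph{right} index $i$ and \emph{every} $j$; the danger is that for a fixed $\nu$ with $\nu_i<0$ the minimizing faces $C_j$ might still be lower-dimensional in a way that makes the "facet" argument subtle. I expect the slick way around this is to note that (H3) guarantees, for each $i$, that $\mathrm{conv}(\cA_j^0)$ has positive extent in the $x_i$-direction for every $j$, hence $Q^0$ has positive extent in $x_i$; an inner normal $\nu$ of a lower facet with $\nu_i<0$ would then let us move in the $+e_i$ direction within the lower hull strictly decreasing $\langle\nu,\cdot\rangle$, which is impossible for a supporting hyperplane of a facet. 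Making "lower hull" versus "the graph of the piecewise-linear lifting function" precise here — i.e. that moving in $+e_i$ keeps us on or below the lower hull because the lifting is $0$ on all of $\cA_j$ — is the one spot that requires genuine care rather than bookkeeping.
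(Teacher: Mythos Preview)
Your overall strategy is exactly the paper's: show that every cell of the subdivision of $\cA^0$ induced by $\omega^0$ is stable, then use additivity of mixed volumes along the subdivision. The gap is entirely in your step two, where your arguments for $\nu_i\ge 0$ remain heuristic and, in the alternative version, incorrect.

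Your ``move in the $+e_i$ direction within the lower hull'' argument does not work: if $\nu$ is an inner normal to a facet $F$ of $Q^0$, then $F$ is precisely the locus where $\langle\nu,\cdot\rangle$ attains its minimum on $Q^0$; moving in the $+e_i$ direction from a point of $F$ with $\nu_i<0$ would indeed decrease the functional, but it also takes you \emph{out} of $Q^0$ (otherwise $F$ would not be where the minimum is attained). So there is nothing to contradict. Your first attempt (``$\nu_i<0$ forces the minimizing face to sit at large $i$-th coordinate, contradicting full dimension'') is also not a proof as written: the minimizing face $C_j$ need not sit at large $i$-th coordinate, and there is no dimensional obstruction in sight.

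The paper's argument for this step is short and hinges on one observation you did not isolate. If the cell $C=(C_1,\dots,C_n)$ is not the trivial cell $(\cA_1,\dots,\cA_n)$ (whose inner normal is $(0,\dots,0,1)$, so stability is immediate), then necessarily $0\in C_{j_0}$ for some $j_0$: otherwise every $C_j\subset\cA_j$, all lifted to height $0$, and the corresponding face of $Q^0$ would be a proper subface of the height-$0$ facet, hence not itself a facet. Once $0\in C_{j_0}$, the rest is a one-liner. Normalizing the inner normal as $(\eta_1,\dots,\eta_n,1)$, the minimum of $\langle(\eta,1),(q,\omega^0_{j_0}(q))\rangle$ over $\cA_{j_0}^0$ is attained at the lifted origin $(0,1)$, where it equals $1$; by (H3) the point $(\mu_{ij_0}e_i,0)$ lies in the lifted $\cA_{j_0}^0$, so $\eta_i\mu_{ij_0}\ge 1$ and hence $\eta_i>0$ for every $i$.

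Your first attempt was in fact close: you correctly observed that if $\nu_i<0$ then the lifted origin cannot minimize the functional in \emph{any} summand, since its value $\nu_{n+1}>0$ exceeds $\nu_i\mu_{ij}<0$ at $\mu_{ij}e_i$. The missing piece was precisely the complementary observation above---that every non-trivial cell \emph{must} have the origin in some $C_{j_0}$---which turns your remark into an immediate contradiction.
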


\begin{proof} It suffices to prove that every cell in the subdivision of $\cA^0= (\cA_1^0, \dots, \cA_n^0)$ induced by the lifting function introduced in Section \ref{subsec:SMV} is stable.

Consider a cell $C=(C_1,\dots,C_n)$ of the stated subdivision different from
$(\cA_1,\dots, \cA_n)$ (for which the result is trivial), and let
$\eta=(\eta_1, \dots, \eta_n,1)$ be its associated inner normal vector. We have to show that $\eta_i\ge 0$ for every $1\le i \le n$.

For every $1 \le j \le n$, there exists $a_{C_j}\in \R$ such that
$a_{C_j}= \eta .\, (q, \omega_j^0(q))$ for all $ q \in {C}_j$
and $a_{C_j}\le  \eta .\, (q, \omega_j^0(q))$ for all $ q \in
\cA_j^0$. As the cell $C$ is not $(\cA_1,\dots, \cA_n)$, there
exists $j_0$ such that $0\in C_{j_0}$ and $0\notin \cA_{j_0}$;
then, $a_{C_{j_0}}=\eta .\, (0,1) =1$. Since, by assumption
(H3), for all $1 \le i \le n$, there exists $\mu_{ij_0} \in \N$
such that $\mu_{i j_0} e_i \in \cA_{j_0}^0$, then,
$1=a_{C_{j_0}}\le \eta\, .\, \mu_{ij_0}(e_i,0)=\eta_i\mu_{ij_0}
$. The result follows from the fact that $\mu_{ij_0}>0$ for all
$1\le i\le n$.
\end{proof}

Now, we can state our first formula for the multiplicity of the origin.

\begin{proposition}\label{prop:mult0}  Let $\bff = (f_1, \dots, f_n)$ be a generic polynomial system in $\C[x_1, \dots, x_n]$  supported on a family  $\cA= (\cA_1, \dots, \cA_n)$ of finite sets of $(\Z_{\ge 0})^n$ satisfying assumptions (H1) and (H3). Then, the origin is an isolated common zero of $\bff$ and
$$\mult_0 (\bff) = MV_n(\cA^{0})- MV_n(\cA).$$
\end{proposition}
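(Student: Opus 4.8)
The plan is to combine the upper bound from \eqref{eq:cotamult} with Lemma \ref{lem:SMvsMV0} to get the inequality $\mult_0(\bff) \le MV_n(\cA^0) - MV_n(\cA)$, and then to establish the reverse inequality by a careful bookkeeping of the roots of $\bff$ in $\C^n$ versus $(\C^*)^n$. First I would observe that assumption (H3) implies (H2), so the origin is indeed an isolated common zero of a generic $\bff$ supported on $\cA$ by \cite[Proposition 6]{HJS13}; moreover, since (H1) and (H2) hold, \cite[Theorem 2]{HS97} gives that the total number of zeros in $\C^n$ counted with multiplicity equals $SM_n(\cA)$, which by Lemma \ref{lem:SMvsMV0} equals $MV_n(\cA^0)$. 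On the other hand, for a generic choice of coefficients Bernstein's theorem gives exactly $MV_n(\cA)$ roots in $(\C^*)^n$, each of them simple. Hence the total multiplicity accumulated on the coordinate hyperplanes is $MV_n(\cA^0) - MV_n(\cA)$.

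The remaining issue — and this is where the argument needs care — is that a priori this leftover multiplicity could be distributed among several isolated roots lying on coordinate hyperplanes, not all of it at the origin; so the displayed formula requires showing that, for a generic system supported on $\cA$ satisfying (H3), the origin is the \emph{only} zero of $\bff$ outside $(\C^*)^n$. The key point will be assumption (H3): because each $f_j$ contains a pure power $x_i^{\mu_{ij}}$ of every variable $x_i$, if we set some subset of coordinates $\{x_i : i \in I\}$ equal to $0$ and ask for a common zero with the remaining coordinates in $(\C^*)^{[n]\setminus I}$, the restricted system still contains the monomials $x_i^{\mu_{ij}}$ for $i \in [n]\setminus I$ in each equation, and genericity of the coefficients forces this restricted system to have no solution with all remaining coordinates nonzero when $I \ne \emptyset$ — intuitively, the $n$ equations become "overdetermined" on the stratum. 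I would make this precise by a dimension count: for each nonempty $I$, the set of coefficient vectors $\mathbf{c}$ for which $\bff_{\mathbf{c}}$ has a zero on the stratum $\{x_i = 0, i\in I\} \cap (\C^*)^{[n]\setminus I}$ is a proper subvariety, because the number of equations exceeds the dimension of the torus $(\C^*)^{[n]\setminus I}$ once the pure-power monomials are taken into account — here one uses (H3) together with the Bernstein-type genericity to see the incidence variety projects to a proper closed subset of coefficient space. Intersecting finitely many such Zariski-open conditions over all nonempty $I \subsetneq [n]$, together with the open set from Lemma \ref{lem:multgen} and the genericity set for Bernstein's count, yields a nonempty Zariski-open set of coefficients on which the only non-torus zero is the origin.

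Alternatively, and perhaps more cleanly, I would avoid the stratum-by-stratum analysis by invoking \cite[Theorem 2]{HS97} more directly: under (H1) and (H2), that theorem counts with multiplicity all zeros in $\C^n$, and one can combine it with the deformation/homotopy picture underlying the stable mixed volume — the stable cells other than $\cA$ itself correspond precisely to the limit behavior at the coordinate hyperplanes. Under (H3), Lemma \ref{lem:SMvsMV0} says \emph{every} cell is stable, and I would argue that the zeros tracked by the stable cells all collapse to the origin because the pure-power monomials pin every vanishing-coordinate limit to $0$ rather than to another boundary point. Either way, the final step is purely arithmetic: $\mult_0(\bff) = \big(\text{total multiplicity in }\C^n\big) - \big(\text{multiplicity in }(\C^*)^n\big) = SM_n(\cA) - MV_n(\cA) = MV_n(\cA^0) - MV_n(\cA)$, using Lemma \ref{lem:SMvsMV0}. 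The main obstacle is thus the genericity claim that no isolated zero other than the origin escapes to the coordinate hyperplanes; everything else is immediate from the cited results and \eqref{eq:cotamult}.
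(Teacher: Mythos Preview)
Your proposal is correct and follows essentially the same route as the paper: use Bernstein for the torus count, \cite{HS97} together with Lemma~\ref{lem:SMvsMV0} for the total affine count, and subtract. The one step you flag as the ``main obstacle''---that under (H3) the origin is the \emph{only} zero outside $(\C^*)^n$---is dispatched in the paper in a single sentence, and your overdetermined/dimension-count justification is exactly the right way to unpack it (equivalently: (H3) forces $J_I=\{1,\dots,n\}$ for every $I$, so no proper stratum $O_I$ satisfies condition~(A1)); the paper also cites \cite{Oka97} for the simplicity of the torus roots, which you take for granted from genericity.
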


\begin{proof}
Assumption (H1) implies that the origin is a common zero of the polynomials $\bff$.
In addition, by assumption (H3), the only common zero of $\bff$ not in $(\C^*)^n$
is the origin. Then, all the common zeros of $\bff$ in $\C^n$ are isolated and so, the number of these common zeros is $SM_n(\cA)$ (see \cite{HS97}).
Finally, since the number of common zeros of $\bff$ in $(\C^*)^n$ is $MV_n(\cA)$ (see \cite{Ber75}) and all these zeros have multiplicity $1$ (see \cite{Oka97}), we deduce that $MV_n(\cA) + \mult_0(\bff) = SM_n(\cA)$. Thus, the result follows from Lemma \ref{lem:SMvsMV0}.
\end{proof}

\begin{example} \label{ex1} Consider the generic polynomial system $\bff = (f_1,f_2, f_3) $ with
$$\begin{array}{rcl}
f_1 &=&  c_{11} x_1  +  c_{12} x_2 + c_{13} x_2^2 + c_{14} x_1^2 x_2 x_3  + c_{15} x_3^7\\
f_2 &=& c_{21}  x_1^2 + c_{22} x_1^3 + c_{23} x_1^2 x_2 + c_{24} x_3^3 + c_{25} x_2^7\\
f_3 &=& c_{31} x_1  + c_{32}  x_1x_2 + c_{33} x_3^2+ c_{34} x_2 x_3^3  + c_{35} x_2^7\\
\end{array}$$
with support family $\cA = (\cA_1,\cA_2, \cA_3)$, where
$$\begin{array}{rcl}
\cA_1 & =  &\{ (1,0,0), (0,1,0), (0,2,0), (2,1,1), (0,0,7)\} \\
\cA_2 & =  &\{ (2,0,0), (3,0,0), (2,1,0), (0,0,3), (0,7,0)\} \\
\cA_3 & =  &\{ (1,0,0), (1,1,0), (0,0,2), (0,1,3), (0,7,0)\}
\end{array}$$
satisfying assumptions (H1) and (H3). Then, Proposition \ref{prop:mult0} states that $0$ is an isolated common root of $\bff$ with multiplicity
$$\mult_0(\bff) = MV_3(\cA^0) - MV_3(\cA) = 147-144 =3.$$
\end{example}

\medskip

In order to restate the formula in the previous proposition by means of a mixed integral of suitable convex functions, we first introduce further notation and prove some auxiliary results.

For $1\le j \le n$, let $Q_j=\conv(\cA_j)$ and $\Delta_j=\conv\{0, \lambda_{1j}e_1,
\dots, \lambda_{nj}e_n\}$, where
\begin{equation}\label{eq:lambdaij}
\lambda_{ij}=\mbox{min}\{\mu\in \N \mid \mu e_i \in Q_j\} \quad \hbox { for } 1\le i \le n.
\end{equation}

Let $\pi:\R^n \rightarrow \R^{n-1}$ be the projection to the first $n-1$ coordinates.
As in Section \ref{section:general}, let
$\sigma_j:\pi(Q_j) \rightarrow \R$ denote the concave function that
parameterizes the upper envelope of $Q_j$ and $\rho_j:\pi(Q_j)
\rightarrow \R$ the convex function that parameterizes its
lower envelope. Since $\pi(\Delta_j) \subset \pi(Q_j)$, we may consider
\begin{equation} \label{eq:restricciones}
\overline{\sigma}_j=
\sigma_j|_{\pi(\Delta_j)} \quad \hbox{ and } \quad \overline{\rho}_j=
\rho_j|_{\pi(\Delta_j)},
\end{equation}
the restrictions of these functions to $\pi(\Delta_j)$.

For a non-empty set $J \subset \{1,\dots, n\}$, we denote
$$\Delta_J := \sum_{j\in J} \Delta_j, \qquad Q_J := \sum_{j\in J} Q_j .$$

\begin{lemma} \label{lem:normal<0} Let $J \subset \{1, \dots, n\}$ be a non-empty set. Then,
every facet of $\Delta_J$ that is not contained in a hyperplane $\{x_i=0\}$, for $1\le i \le n$, has an inner normal vector with all negative coordinates.
We will call these facets the \emph{non-trivial} facets of $\Delta_J$.
\end{lemma}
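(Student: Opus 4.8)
The plan is to argue directly from the definition of $\Delta_J$ as the Minkowski sum $\sum_{j\in J}\Delta_j$, where each $\Delta_j = \conv\{0, \lambda_{1j}e_1,\dots,\lambda_{nj}e_n\}$ is a simplex with one vertex at the origin and the remaining $n$ vertices on the positive coordinate axes, all $\lambda_{ij}>0$ by \eqref{eq:lambdaij}. First I would record that a facet $F$ of the Minkowski sum $\Delta_J$ corresponds to a normal vector $v\in\R^n$ such that, for each $j\in J$, the face $\Delta_j^v := \{p\in\Delta_j : \langle v,p\rangle \text{ minimal}\}$ has the property that $\sum_{j\in J}\Delta_j^v$ has dimension $n-1$; moreover the inner normal of $F$ can be taken to be $v$. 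So it suffices to understand, for a fixed candidate inner normal $v=(v_1,\dots,v_n)$, which faces $\Delta_j^v$ look like, and then to show that if some coordinate $v_i \ge 0$ then the corresponding facet of $\Delta_J$ lies in a coordinate hyperplane $\{x_i=0\}$.

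The key computation is the following: for the simplex $\Delta_j$ with vertices $0, \lambda_{1j}e_1,\dots,\lambda_{nj}e_n$ and a linear functional $v$, the value $\langle v, \lambda_{ij}e_i\rangle = \lambda_{ij}v_i$, while $\langle v,0\rangle = 0$. Hence the minimum of $\langle v,\cdot\rangle$ over $\Delta_j$ is $\min\{0, \min_i \lambda_{ij}v_i\}$. If all $v_i < 0$, the minimum is $\min_i \lambda_{ij}v_i < 0$ and is attained only among the axis vertices — the origin is never on $\Delta_j^v$; in that case one checks that the minimizing faces across $j\in J$ can be chosen so their Minkowski sum is $(n-1)$-dimensional, giving genuine facets, and these are exactly the non-trivial ones. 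Conversely, suppose $v$ is the inner normal of a facet $F$ of $\Delta_J$ and $v_{i_0}\ge 0$ for some index $i_0$. I claim $F\subset\{x_{i_0}=0\}$. Indeed, for each $j$, since $\lambda_{i_0 j}v_{i_0}\ge 0 = \langle v,0\rangle$, the vertex $\lambda_{i_0 j}e_{i_0}$ is \emph{not} a strict minimizer beating $0$; more precisely every point $p\in\Delta_j^v$ minimizing $\langle v,\cdot\rangle$ can be written as a convex combination of minimizing vertices of $\Delta_j$, and $\lambda_{i_0 j}e_{i_0}$ is a minimizing vertex only if $\lambda_{i_0 j}v_{i_0} = \min\{0,\min_i\lambda_{ij}v_i\}$, which forces $v_{i_0}=0$ and then still the $i_0$-coordinate of any such $p$ need not vanish. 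This is the delicate point, so let me handle the two subcases: if $v_{i_0}>0$, then $\lambda_{i_0 j}v_{i_0}>0\ge \min_i\lambda_{ij}v_i$ cannot be minimal unless — no, it simply is not among the minimizers, so the $i_0$-th coordinate of every vertex of $\Delta_j^v$ is $0$, whence $x_{i_0}=0$ on all of $\Delta_j^v$, hence on $F=\sum_j\Delta_j^v$; if $v_{i_0}=0$, then $\lambda_{i_0 j}v_{i_0}=0$, so the $i_0$-direction is "flat," and $\Delta_j^v$ is the face of $\Delta_j$ cut out by $v$ restricted to the remaining coordinates, and one argues that $\sum_j \Delta_j^v$ is then contained in a coordinate hyperplane $\{x_i=0\}$ for some other index $i$ with $v_i<0$ (using that $F$ is a genuine facet, so $v\ne 0$, so some coordinate of $v$ is nonzero, and every nonzero coordinate of $v$ that is $\ge 0$ forces its coordinate hyperplane).

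Assembling these, I would prove the contrapositive of the statement: if a facet $F$ of $\Delta_J$ is \emph{not} contained in any $\{x_i=0\}$, then no coordinate of its inner normal $v$ is $\ge 0$, i.e. all coordinates are strictly negative. The main obstacle I anticipate is the bookkeeping in the $v_{i_0}=0$ subcase — one has to be careful that a zero coordinate of the normal does not by itself put the facet in a coordinate hyperplane, but combined with the fact that a facet normal is nonzero and the simplex structure is "axis-aligned," the nonzero coordinates take over. A clean way to package this is: write $v = v^+ + v^-$ with $v^+$ the nonnegative part; show $\Delta_j^v \subset \bigcap_{i:\,v_i > 0}\{x_i = 0\}$ for every $j$, hence $F \subset \bigcap_{i:\,v_i>0}\{x_i=0\}$; since $F$ avoids all coordinate hyperplanes, there is no $i$ with $v_i>0$; and since $v\ne 0$ there must be some $i$ with $v_i<0$ — but we need \emph{all} coordinates negative, so additionally rule out $v_i=0$ by a dimension count: if $v_i=0$ for some $i$, then $v$ lies in the linear span of $\{e_j^* : j\ne i\}$ and $\Delta_j^v\supseteq \Delta_j\cap\{x_i = t\}$-type slices forcing $F$ to contain an edge in the $e_i$ direction issuing from $\sum_j 0$; a short argument shows such an $F$ is then a facet only if it is contained in $\{x_i = 0\}$ after all — contradicting the hypothesis. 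That last dimension argument, or an equivalent one using that $\Delta_j$ has exactly $n$ non-origin vertices and $F$ must involve $n-1$ linearly independent edge directions, is where the real content sits, and I would write it out carefully.
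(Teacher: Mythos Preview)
Your contrapositive strategy is sound and the case $v_{i_0}>0$ is handled correctly: indeed $\lambda_{i_0 j}e_{i_0}$ is never a minimizer, so $\Delta_j^v\subset\{x_{i_0}=0\}$ for all $j$ and $F\subset\{x_{i_0}=0\}$. The gap is entirely in the case $v_{i_0}=0$, where both of your proposed arguments fail. Your first attempt, that $F$ lands in $\{x_i=0\}$ for some \emph{other} index $i$ with $v_i<0$, is backwards: for such $i$ the vertex $\lambda_{ij}e_i$ may well minimize $\langle v,\cdot\rangle$ on $\Delta_j$, so there is no reason for $F$ to lie in $\{x_i=0\}$. Your second attempt, the ``dimension count'' claiming that $\Delta_j^v$ contains an edge in the $e_{i_0}$-direction issuing from $0$, is simply false. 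Once you have established that no $v_i>0$, if $v_{i_0}=0$ then (since $v\ne 0$) some $v_k<0$, so $\min_i\lambda_{ij}v_i\le\lambda_{kj}v_k<0$; hence neither $0$ nor $\lambda_{i_0 j}e_{i_0}$ (both with value $0$ under $\langle v,\cdot\rangle$) lies in $\Delta_j^v$, and there is no such edge. But this very observation finishes the proof with no dimension argument at all: since $\lambda_{i_0 j}e_{i_0}$ is the only vertex of $\Delta_j$ with nonzero $i_0$-th coordinate, $\Delta_j^v\subset\{x_{i_0}=0\}$ for every $j$, whence $F\subset\{x_{i_0}=0\}$, contradicting non-triviality. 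So your case analysis can be salvaged, and in fact the $v_{i_0}=0$ case works exactly like the $v_{i_0}>0$ case once you note that the minimum over $\Delta_j$ is strictly negative.

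For comparison, the paper avoids the sign case-split by arguing directly. Given a non-trivial facet $F=\sum_{j\in J}F_j$ with inner normal $\eta$: for each $i$, since $F\not\subset\{x_i=0\}$, some $F_{j_i}$ contains a point with nonzero $i$-th coordinate, forcing $\lambda_{ij_i}e_i\in F_{j_i}$; comparing with $0\in\Delta_{j_i}$ gives $\eta_i\lambda_{ij_i}\le\eta\cdot 0=0$, so $\eta_i\le 0$. Then one shows $0\notin F_j$ for every $j$ (otherwise $\eta\cdot\lambda_{kj}e_k\ge 0$ for all $k$ would force $\eta=0$), and since $0\notin F_{j_i}$ the previous inequality is strict: $\eta_i<0$. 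This is the same content as the corrected version of your argument, packaged without the contrapositive.
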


\begin{proof} If $J=\{j\}$ for some $1 \le j \le n$, the result is straightforward
because the only facet satisfying the required conditions is
$F=\conv\{\lambda_{1j}e_1, \dots, \lambda_{nj}e_n\}$, and $\lambda_{ij} \in \N$ for every $1\le i \le n$.

Let $F$ be a non-trivial facet of $\Delta_J$ and $\eta=(\eta_1, \dots, \eta_n)$ an inner normal vector of $F$.  Then, $F=\sum_{j\in J}F_j$, where $F_j$ is a face of $\Delta_j$ with
inner normal vector $\eta$. For every $1 \le i \le n$, since $F$ is not contained in the hyperplane $\{x_i=0\}$,
there exists $j_i\in J$ such that $\lambda_{ij_i}e_i\in F_{j_i}$;
then,
\begin{equation}\label{eq:eta}
0= \eta.\, 0 \ge \eta.\, \lambda_{ij_i}e_i =\eta_i
\lambda_{ij_i}
\end{equation}
and, so $\eta_i\le 0$.

If $0\in F_{j}$ for some $j\in J$, then $\eta.\, q \ge \eta\cdot
0=0$ for every $q \in \Delta_{j}$; in particular, $\eta_k
\lambda_{kj} = \eta.\, \lambda_{kj} e_k \ge \eta.\, 0 =0$ for
every $1\le k \le n$. This implies that $\eta = 0$, a
contradiction. Then, $0\notin F_j$ for every $j\in J$, the
inequalities in (\ref{eq:eta}) are strict and, therefore, $\eta_i
<0$ for every $1\le i \le n$.
\end{proof}

\begin{lemma} \label{lem:cero en borde} Let $J \subset \{1, \dots, n\}$ be a non-empty set.
 Then, for every point $x$ in a
non-trivial facet of $\pi(\Delta_J)$ we have that
$(\boxplus'_{j\in J}\rho_j)(x)=0$.
\end{lemma}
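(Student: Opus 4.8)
The plan is to unwind the definition of $\boxplus'_{j\in J}\rho_j$ and use the geometric description of $\Delta_J$ provided by Lemma \ref{lem:normal<0}. Recall that $\Delta_j = \conv\{0,\lambda_{1j}e_1,\dots,\lambda_{nj}e_n\}$, so $\pi(\Delta_j) = \conv\{0,\lambda_{1j}e_1,\dots,\lambda_{n-1,j}e_{n-1}\}\subset\R^{n-1}$ (here the vertex $\lambda_{nj}e_n$ projects to $0$), and $\rho_j = \rho_{Q_j}$ parameterizes the lower envelope of $Q_j$, with $\overline\rho_j$ its restriction to $\pi(\Delta_j)$. A key preliminary observation is that $\rho_j$ vanishes at every vertex of $\pi(\Delta_j)$: indeed $0\in\cA_j$ is impossible by (H1), but the points $\lambda_{ij}e_i$ for $1\le i\le n$ lie in $Q_j$ with last coordinate $0$, hence on the lower envelope (the support is in $(\Z_{\ge0})^n$), so $\rho_j(\pi(\lambda_{ij}e_i))\le 0$; conversely $\rho_j\ge 0$ on $\pi(\Delta_j)\subset\pi(Q_j)$ since all of $Q_j$ has nonnegative last coordinate. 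Actually I must be careful about the vertex $0\in\R^{n-1}$: it is $\pi(\lambda_{nj}e_n)$, and again $\rho_j(0)\ge 0$ while the point $\lambda_{nj}e_n\in Q_j$ shows $\rho_j(0)\le 0$, so $\rho_j$ vanishes at $0$ too. Hence $\overline\rho_j\ge 0$ on $\pi(\Delta_j)$ and $\overline\rho_j = 0$ at all vertices of $\pi(\Delta_j)$.

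Next I would use the fact (Section \ref{section:general}) that $\boxplus'_{j\in J}\rho_j$ parameterizes the lower envelope of $Q_J = \sum_{j\in J}Q_j$ over $\pi(Q_J)=\sum_{j\in J}\pi(Q_j)$. Fix $x$ in a non-trivial facet $\pi(F)$ of $\pi(\Delta_J)$, where $F$ is a non-trivial facet of $\Delta_J$ with inner normal $\eta = (\eta_1,\dots,\eta_n)$ having all coordinates negative, by Lemma \ref{lem:normal<0}; normalize so that the affine span of $F$ is $\{z\in\R^n : \eta\cdot z = 0\}$ (this uses that $0\in\Delta_J$ lies on $F$ — which I should check: by the proof of Lemma \ref{lem:normal<0}, $0\notin F_j$ for each $j$, so in fact $0\notin F$; instead the correct normalization is $\eta\cdot z = c$ on $F$ for the appropriate constant $c = \eta\cdot 0 \cdot$ — no. Let me re-set this: write the facet as $\{z : \eta\cdot z = c_J\}\cap\Delta_J$; since $\Delta_J\subset\{\eta\cdot z\ge c_J\}$ and $0\in\Delta_J$ we get $c_J\le 0$.) The point is that a point $x\in\pi(F)$ can be written $x = \sum_{j\in J} y_j$ with $y_j$ in the face $F_j$ of $\pi(\Delta_j)$, and along $\Delta_j$ the linear form $\eta$ is minimized exactly on $\{\lambda_{1j}e_1,\dots,\lambda_{nj}e_n\}$-faces — concretely, each $F_j$ is a face of $\pi(\Delta_j)$ that avoids no $e_i$-direction collectively.

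Now I bound $(\boxplus'_{j\in J}\rho_j)(x)$ from both sides. Since each $\Delta_j\subset Q_j$ hence $\pi(\Delta_j)\subset\pi(Q_j)$ and $\overline\rho_j = \rho_j|_{\pi(\Delta_j)}$, writing $x = \sum_{j\in J} y_j$ with $y_j$ a vertex of $F_j$ (every point of the facet is a convex combination of vertices, and vertices of $F_j$ are among the $\pi(\lambda_{ij}e_i)$, where $\rho_j$ vanishes), we get $(\boxplus'_{j\in J}\rho_j)(x)\le\sum_{j\in J}\rho_j(y_j) = 0$; by convexity of each $\rho_j$ (so $\boxplus'$ stays $\le 0$ on the whole facet, not just at vertices) this gives the upper bound $\le 0$ on all of $\pi(F)$. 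For the lower bound, consider the hyperplane $H\subset\R^n$ through the origin with normal $(\eta,1)\in\R^{n+1}$ — wait, I want a supporting hyperplane for $Q_J$ from below. Observe that $Q_J\subset(\Z_{\ge0})^n$'s convex hull lives in nonnegative coordinates, and I claim the affine function $\ell(z) = -\langle\eta,z\rangle$ (in the first $n-1$ variables, suitably extended) is a lower support: for each $j$, $\rho_j\ge 0$ on all of $\pi(Q_j)$, hence $\boxplus'_{j\in J}\rho_j\ge 0$ everywhere on $\pi(Q_J)$. Combining, $(\boxplus'_{j\in J}\rho_j)(x) = 0$ for $x\in\pi(F)$.

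Actually the two bounds already suffice: $(\boxplus'_{j\in J}\rho_j)(x)\le 0$ from the facet argument, and $(\boxplus'_{j\in J}\rho_j)(x)\ge 0$ from $\rho_j\ge 0$ coordinatewise and the definition of $\boxplus'$ as a min of sums. So the heart of the proof is the upper bound, which rests on: (i) $\rho_j$ vanishes at every vertex $\pi(\lambda_{ij}e_i)$ of $\pi(\Delta_j)$ (an (H1)-plus-positivity-of-coordinates argument), and (ii) every point of a non-trivial facet $\pi(F)$ of $\pi(\Delta_J)$ decomposes as $\sum_{j\in J} y_j$ with each $y_j$ lying on a face $F_j$ of $\pi(\Delta_j)$ whose vertices are vertices of $\pi(\Delta_j)$, i.e., of the form $\pi(\lambda_{ij}e_i)$ — this follows from the Minkowski-sum structure of faces ($F = \sum_j F_j$) together with Lemma \ref{lem:normal<0}, which forces the relevant faces not to be the "apex at $0$" faces. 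The main obstacle I anticipate is handling the projection $\pi$ carefully: the vertex $\lambda_{nj}e_n$ projects onto $0\in\R^{n-1}$, which is also the image of the apex, so one must argue that on a non-trivial facet the selected face $F_j$ of $\pi(\Delta_j)$ still consists of (projections of) the lattice points $\lambda_{ij}e_i$ rather than genuinely using the apex $0$ of $\Delta_j$ in an essential way; the proof of Lemma \ref{lem:normal<0} (where it is shown $0\notin F_j$) is exactly what rescues this.

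\begin{proof}
First, we claim that $\overline\rho_j \ge 0$ on $\pi(\Delta_j)$ for each $1\le j\le n$, and $\overline\rho_j$ vanishes at every vertex of $\pi(\Delta_j)$. Indeed, $\cA_j\subset (\Z_{\ge 0})^n$, so every point of $Q_j$ has nonnegative last coordinate; hence $\rho_j\ge 0$ on $\pi(Q_j)$, and since $\pi(\Delta_j)\subset\pi(Q_j)$ we get $\overline\rho_j\ge 0$. On the other hand, for each $1\le i\le n$ we have $\lambda_{ij}e_i\in Q_j$ (by \eqref{eq:lambdaij}) and this point has last coordinate $0$, so $\rho_j(\pi(\lambda_{ij}e_i))\le 0$, whence $\rho_j(\pi(\lambda_{ij}e_i)) = 0$. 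As the vertices of $\pi(\Delta_j) = \conv\{0,\lambda_{1j}e_1,\dots,\lambda_{n-1,j}e_{n-1}\}$ are among the points $\pi(\lambda_{ij}e_i)$, $1\le i\le n$ (with $\pi(\lambda_{nj}e_n)=0$), the claim follows.

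Now fix a non-empty $J\subset\{1,\dots,n\}$, a non-trivial facet $F$ of $\Delta_J$, and a point $x$ in the corresponding facet $\pi(F)$ of $\pi(\Delta_J)$. Let $\eta = (\eta_1,\dots,\eta_n)$ be an inner normal vector of $F$; by Lemma \ref{lem:normal<0}, $\eta_i < 0$ for all $1\le i\le n$. Write $F = \sum_{j\in J} F_j$, where $F_j$ is the face of $\Delta_j$ on which $\eta$ is minimized. By the argument in the proof of Lemma \ref{lem:normal<0}, $0\notin F_j$ for every $j\in J$; hence every vertex of $F_j$ is of the form $\lambda_{ij}e_i$ for some $1\le i\le n$, and consequently every vertex of the face $\pi(F_j)$ of $\pi(\Delta_j)$ is a vertex of $\pi(\Delta_j)$.

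Since $x\in\pi(F) = \sum_{j\in J}\pi(F_j)$, we may write $x = \sum_{j\in J} y_j$ with $y_j\in\pi(F_j)$. Using the definition of $\boxplus'_{j\in J}\rho_j$ together with $\pi(\Delta_j)\subset\pi(Q_j)$ and $\rho_j|_{\pi(\Delta_j)} = \overline\rho_j$, we obtain
$$(\boxplus'_{j\in J}\rho_j)(x) \le \sum_{j\in J}\overline\rho_j(y_j).$$
Each $\overline\rho_j$ is convex on $\pi(\Delta_j)$ and vanishes at all vertices of the face $\pi(F_j)$ (these being vertices of $\pi(\Delta_j)$, by the previous paragraph and the first claim), together with $\overline\rho_j\ge 0$; therefore $\overline\rho_j \equiv 0$ on $\pi(F_j)$, and in particular $\overline\rho_j(y_j) = 0$. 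Hence $(\boxplus'_{j\in J}\rho_j)(x)\le 0$.

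For the reverse inequality, recall that $\boxplus'_{j\in J}\rho_j$ parameterizes the lower envelope of $Q_J = \sum_{j\in J} Q_j$, whose points all have nonnegative last coordinate since each $Q_j\subset\conv((\Z_{\ge 0})^n)$; equivalently, from $\rho_j\ge 0$ on $\pi(Q_j)$ for every $j$ and the definition of $\boxplus'$ as a minimum of sums $\sum_{j\in J}\rho_j(z_j)$, we get $(\boxplus'_{j\in J}\rho_j)(x)\ge 0$. Combining both inequalities yields $(\boxplus'_{j\in J}\rho_j)(x) = 0$.
\end{proof}
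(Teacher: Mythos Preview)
Your argument has the right two ingredients --- the lower bound $(\boxplus'_{j\in J}\rho_j)(x)\ge 0$ from nonnegativity of the last coordinate, and the upper bound from a decomposition $x=\sum_j y_j$ with $\rho_j(y_j)=0$ --- and these are exactly what the paper uses. However, there is a genuine gap in your framing of the upper bound.

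You begin by fixing a non-trivial facet $F$ of $\Delta_J\subset\R^n$ and then speak of ``the corresponding facet $\pi(F)$ of $\pi(\Delta_J)$''. But $\pi(F)$ is not in general a facet of $\pi(\Delta_J)$: already for $J=\{j\}$ the unique non-trivial facet of $\Delta_j$ is $\conv\{\lambda_{1j}e_1,\dots,\lambda_{nj}e_n\}$, whose projection is all of $\pi(\Delta_j)$, not its non-trivial facet $\conv\{\lambda_{1j}e_1,\dots,\lambda_{n-1,j}e_{n-1}\}$. More to the point, the lemma asks you to treat an arbitrary non-trivial facet of $\pi(\Delta_J)$, and you never establish that every such facet arises as (or lies inside) $\pi(F)$ for some non-trivial facet $F$ of $\Delta_J$. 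So as written, the argument does not cover what the statement requires.

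The fix is immediate and is what the paper does: work directly in $\R^{n-1}$. The polytopes $\pi(\Delta_j)=\conv\{0,\lambda_{1j}e_1,\dots,\lambda_{n-1,j}e_{n-1}\}$ are simplices of exactly the same shape as the $\Delta_j$, one dimension lower, so the argument of Lemma~\ref{lem:normal<0} applies verbatim to $\pi(\Delta_J)$. Hence a non-trivial facet $F'$ of $\pi(\Delta_J)$ decomposes as $F'=\sum_{j\in J}F'_j$ with $F'_j$ a face of $\pi(\Delta_j)$ not containing $0$; then $F'_j\subset\conv\{\lambda_{1j}e_1,\dots,\lambda_{n-1,j}e_{n-1}\}$, and your first paragraph already gives $\rho_j\equiv 0$ there. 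This removes the detour through $\R^n$ entirely.
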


\begin{proof}
If $J=\{j\}$, we have $x \in \conv\{\lambda_{1j}\pi(e_{1}), \dots,\lambda_{n-1,j}\pi(e_{n-1}) \}$, that is, $x = \sum_{i=1}^{n-1}t_i \lambda_{ij}\pi(e_{i})$ for $t_i\ge 0$ with $\sum_{i=1}^{n-1}t_i = 1$. Then, since $\rho_j$ is convex, $0 \le \rho_j(x)\le \sum_{i=1}^{n-1}t_i \rho_j(\lambda_{ij}\pi(e_{i}))=0.$

\smallskip
If $\#J >1$, let $x$ be in a nontrivial facet $F$ of $\pi(\Delta_J)$. We have that $F= \sum_{j\in J} F_j$, with  $F_j$ a face of $\pi(\Delta_j)$ such that $0\notin F_j$; then,  $x=\sum_{j\in J}p_j$ with $p_j\in F_j$.  Hence, $\rho_j(p_j)=0$ and, by the definition of
$\boxplus'_{j\in J}\rho_j$, it follows that $0\le (\boxplus'_{j\in J}\rho_j)(x)\le \sum_{j\in J}\rho_j(p_j)=0.$
\end{proof}

\begin{lemma} \label{lem:suma pisos} For every non-empty
subset $J$ of $\{1, \dots, n\}$, the convex function
$\boxplus'_{j\in J}\overline{\rho}_j$
defined over $\pi(\Delta_J)$ parameterizes the lower envelope of $Q_J$ over the points of $\pi(\Delta_J)$.
\end{lemma}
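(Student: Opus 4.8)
The plan is to reduce the statement to a comparison of lower envelopes of Minkowski sums, exploiting that the operation $\boxplus'$ commutes with Minkowski sum of lower envelopes (as recorded at the end of Section~\ref{section:general}), and then to use the previous two lemmas to control what happens over the non-trivial facets of $\pi(\Delta_J)$. Concretely, recall that $\overline{\rho}_j = \rho_j|_{\pi(\Delta_j)}$ and that $\rho_j$ parameterizes the lower envelope of $Q_j$. So the convex function $\boxplus'_{j\in J}\rho_j$ parameterizes the lower envelope of $Q_J = \sum_{j\in J}Q_j$ over all of $\sum_{j\in J}\pi(Q_j)$. Restricting each summand to $\pi(\Delta_j)$ first and then taking $\boxplus'$ amounts to taking an infimal convolution over a smaller feasible region, so in general $\boxplus'_{j\in J}\overline\rho_j(x) \ge (\boxplus'_{j\in J}\rho_j)(x)$ for $x \in \pi(\Delta_J)$; the content of the lemma is that equality holds, i.e. the restriction does not lose anything over $\pi(\Delta_J)$.

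First I would fix $x \in \pi(\Delta_J)$ and show the nontrivial inequality $\boxplus'_{j\in J}\overline\rho_j(x) \le (\boxplus'_{j\in J}\rho_j)(x)$. Pick $y_j \in \pi(Q_j)$ with $\sum_{j\in J} y_j = x$ and $\sum_{j\in J}\rho_j(y_j) = (\boxplus'_{j\in J}\rho_j)(x)$, attaining the minimum in the definition of the infimal convolution over the $\pi(Q_j)$. The goal is to replace this decomposition by one with $y_j \in \pi(\Delta_j)$ without increasing the total value. Since $\pi(\Delta_j) = \conv\{0, \lambda_{1j}\pi(e_1),\dots,\lambda_{n-1,j}\pi(e_{n-1})\}$ and $\rho_j(0)=0$, $\rho_j(\lambda_{ij}\pi(e_i))=0$ for $1\le i\le n-1$, the function $\rho_j$ is nonpositive and at most $0$ on $\pi(\Delta_j)$; the key geometric fact I will use is that $\pi(\Delta_J)$ is exactly the region of $\sum_j\pi(Q_j)$ lying "below" (in the $x_n$-direction of the ambient $Q_j$'s) — more precisely, the projection of the union of the non-trivial lower faces — so that any $x\in\pi(\Delta_J)$ admits a decomposition through the $\pi(\Delta_j)$. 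To make this rigorous, I would invoke Lemma~\ref{lem:normal<0}: the lower envelope of $Q_J$ over $\pi(\Delta_J)$ is supported by faces with inner normal $(\eta,1)$ where $\eta$ has all negative (or, on trivial facets, nonpositive) coordinates, and such faces are Minkowski sums of faces of the individual $Q_j$ that are themselves lower faces with the same normal; each such face of $Q_j$ projects into $\pi(\Delta_j)$ because $\Delta_j\subset Q_j$ and $\Delta_j$ already realizes all such lower-normal directions via its vertices $0,\lambda_{ij}e_i$. Thus the optimal decomposition of $x$ realizing $(\boxplus'_{j\in J}\rho_j)(x)$ can be taken with each $y_j$ in (the projection of) such a lower face of $Q_j$, hence in $\pi(\Delta_j)$, giving $\boxplus'_{j\in J}\overline\rho_j(x)\le \sum_j\rho_j(y_j) = (\boxplus'_{j\in J}\rho_j)(x)$.

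The reverse inequality is immediate from monotonicity of the infimal convolution under shrinking the feasible set, so equality holds and $\boxplus'_{j\in J}\overline\rho_j$ agrees on $\pi(\Delta_J)$ with the function parameterizing the lower envelope of $Q_J$. Finally I would check that $\pi(\Delta_J)$ is contained in the domain $\sum_j\pi(Q_j)$ of the latter (clear, since $\Delta_j\subset Q_j$) and that $\boxplus'_{j\in J}\overline\rho_j$ is genuinely convex on $\pi(\Delta_J)$ (it is an infimal convolution of convex functions restricted to a convex polytope, hence convex), so that it is legitimate to say it "parameterizes the lower envelope of $Q_J$ over the points of $\pi(\Delta_J)$." The main obstacle I anticipate is the precise justification that an optimal decomposition of a point $x\in\pi(\Delta_J)$ can be chosen with all pieces in the $\pi(\Delta_j)$: this requires carefully tracking that the supporting lower faces of $Q_J$ above $\pi(\Delta_J)$ decompose as sums of lower faces of the $Q_j$ whose projections lie in $\pi(\Delta_j)$, which is where Lemma~\ref{lem:normal<0} (all-negative inner normals on non-trivial facets) does the real work; the behaviour over the trivial facets $\{x_i=0\}$, handled via Lemma~\ref{lem:cero en borde}, will need a short separate remark.
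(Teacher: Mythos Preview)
Your reduction to the inequality $\boxplus'_{j\in J}\overline\rho_j(x)\le(\boxplus'_{j\in J}\rho_j)(x)$ for $x\in\pi(\Delta_J)$ is the right formulation, but the argument you give for it does not go through. The central claim, that the lower envelope of $Q_J$ over $\pi(\Delta_J)$ is supported by facets with inner normal $(\eta,1)$ having all negative coordinates, is not what Lemma~\ref{lem:normal<0} says: that lemma concerns the simplex $\Delta_J$, not $Q_J$, and the two polytopes have entirely different facet normals. In fact the claim is false already in the paper's own Example~\ref{ex2}: the lower edge of $Q_1$ joining $(0,4)$ to $(1,1)$ lies over $[0,1]\subset\pi(\Delta_1)$ and has inner normal $(3,1)$, with $\eta=3>0$. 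Two subsidiary assertions also fail: $\Delta_j\subset Q_j$ is false since $0\notin Q_j$ by hypothesis~(H1), and even a lower face of $Q_j$ with genuinely negative $\eta$ need not project into $\pi(\Delta_j)$ (take $\cA_j=\{(2,0),(0,4),(5,1)\}$: the lower edge $(2,0)$--$(5,1)$ has inner normal proportional to $(-1,3)$ yet projects to $[2,5]\not\subset[0,2]=\pi(\Delta_j)$).

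So the mechanism you propose for pushing an optimal decomposition $x=\sum y_j$ into the $\pi(\Delta_j)$'s does not work; there is no reason the summands of a lower facet of $Q_J$ sit over the small simplices. The paper's proof avoids this by a different, more hands-on route: it argues recursively on a splitting $J=J_1\cup J_2$ and, for a given $x\in\pi(\Delta_J)$, starts from an optimal decomposition $x=y'+z'$ over the full $\pi(Q_{J_k})$'s and continuously deforms it toward one with $y'\in\pi(\Delta_{J_1})$, $z'\in\pi(\Delta_{J_2})$, using Lemma~\ref{lem:cero en borde} (vanishing of $\rho_{J_k}$ on non-trivial facets of $\pi(\Delta_{J_k})$) together with convexity to show the objective does not increase along the deformation. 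Lemma~\ref{lem:normal<0} enters only to guarantee that certain half-spaces bounding $\pi(\Delta_{J_k})$ separate points correctly during this deformation. If you want to salvage a normal-fan argument, you would need to relate the lower faces of $Q_J$ over $\pi(\Delta_J)$ to those of $Q_J^0=\sum_j Q_j^0$ via Lemma~\ref{lem:piso ext}; but that is essentially what the later proof of Theorem~\ref{teo:MIcasoTocandoEjes} does, and it relies on the present lemma rather than the other way around.
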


\begin{proof} For every $J\subset \{1, \dots, n\}$, we denote
$$P_J:= \sum_{j\in J} \pi(Q_j), \quad D_J:=\sum_{j\in J}\pi(\Delta_j)$$
and
$\overline{\boxplus'_{j\in J}\rho_j} $ to the restriction of $\boxplus'_{j\in J}\rho_j: P_J \to \R$ to $D_J \subset P_J$. With this notation, we have to prove that
\begin{equation}\label{eq:boxplus}
\boxplus'_{j\in J}\overline{\rho}_j =\overline{\boxplus'_{j\in J}\rho_j}.
\end{equation}

Before proceeding, we will state three basic results that will be applied throughout the proof.
We use the notation $$\rho_J := \boxplus'_{j\in J}\rho_j.$$

\medskip

\noindent \textsc{Claim I.}  If $p_1$ lies in a non-trivial facet of $D_J$ and $p_2 \in P_J$, then for every $x$ lying on the line segment $p_1 p_2$, we have $\rho_J(x) \le \rho_J (p_2)$: as $x = (1-t)p_1 + t p_2$ for $0\le t \le 1$, $\rho_J$ is convex and $\rho_J\equiv 0$ on the non-trivial facets of $D_J$, $\rho_J(x) \le (1-t)\rho_J(p_1) + t \,\rho_J(p_2) = t \, \rho_J(p_2)$.

\smallskip
\noindent \textsc{Claim II.} If $p_1 \in D_J$ and $p_2 \notin D_J$, then for every $x\ne p_2$ lying on the line segment $p_1 p_2$, since $D_J$ is a convex set, $d(p_1, D_J) < d(p_2, D_J)$, where $d(\cdot, D_J)$ is the distance to $D_J$.

\smallskip
\noindent \textsc{Claim III.} If $p_1 \in D_J$ and $p_2\in (\R_{\ge0})^{n-1} \setminus D_J$ there exists $t\in (0, 1]$ such that $tp_1 +(1-t) p_2 $ lies in a non-trivial facet of $D_J$.

\medskip

The proof will be done recursively.  For a fixed non-empty set $J\subset \{1, \dots, n\}$,   let $J_1, J_2$ be disjoints sets such that $J = J_1\cup
J_2$ and assume that identity (\ref{eq:boxplus}) holds for each of them. We will prove that if $\overline{\rho}_{J_k}:=\boxplus'_{j\in J_k}\overline{\rho}_j$, for $k=1,2$, then
$\overline{\rho}_{J_1}\boxplus'\overline{\rho}_{J_2}=
\overline{\rho_{J_1}\boxplus'\rho_{J_2}}$.

\medskip

Let $x \in D_J$. Then, there exist $y_0\in
D_{J_1}$  and $z_0 \in D_{J_2}$ such that $x=y_0+z_0$. Let $y'\in
P_{J_1}$  and $z' \in P_{J_2}$ be such that $x = y' + z'$ and
$\overline{\rho_{J_1}\boxplus' {\rho_{J_2}}} (x) = \rho_{J_1}(y') +\rho_{J_2}(z')$.
If $y'\in
D_{J_1}$  and $z' \in D_{J_2}$ the result follows.

We first show that there exist $y'$ and $z'$ as before satisfying that  $y' \in D_{J_1}$ or $z' \in D_{J_2}$. For every $0 \le t \le  1$,  if $y_t = (1-t)y_0+ ty'$ and $z_t=(1-t)z_0 + tz'$,  then $x = y_t + z_t$. If $y' \notin D_{J_1}$ and $z' \notin D_{J_2}$,
 there exist $ 0 < t_1, t_2 \le 1$ such that
$y_{t_1} $ and
$z_{t_2}$ lie in  non-trivial facets of $D_{J_1}$ and $D_{J_2}$ respectively.
Consider $t_0=\min\{t_1,t_2\}$; then
$x = y_{t_0}+ z_{t_0}$ and, by Claim I, $\overline{\rho_{J_1}\boxplus {\rho_{J_2}}} (x) = \rho_{J_1}(y_{t_0}) +\rho_{J_2}(z_{t_0})$.

Now, without loss of generality, assume that $z' \in D_{J_2}$.  Consider the compact set
$$C_x = \{ y \in P_{J_1} \mid  x-y \in D_{J_2}
\mbox{ and }  \overline{\rho_{J_1}\boxplus' {\rho_{J_2}}} (x)  = \rho_{J_1}(y) + {\rho_{J_2}}(x-y)\}.$$
We will prove that $C_x\cap D_{J_1} \ne \emptyset$. If not, let $y \in C_x$ be such that
$d(C_x, D_{J_1}) = d(y, D_{J_1}) > 0$.

First, assume that $z:=x-y $ does not lie in a non-trivial facet of $D_{J_2}$. This implies that $z + w \in D_{J_2}$ for every $w$ with sufficiently small non-negative coordinates.
Let $0<\epsilon <1$ such that $(1-\epsilon)y \notin D_{J_1}$ and that $z+\epsilon y \in D_{J_2}$. Claims III and I imply that $\rho_{J_1}((1-\epsilon)y) \le \rho_{J_1}(y)$ and that
$\rho_{J_2}(z +\epsilon y) \le \rho_{J_2}(z)$ and, therefore, $\rho_{J_1}\boxplus' {\rho_{J_2}}(x) = \rho_{J_1}((1-\epsilon)y) + \rho_{J_2}(z +\epsilon y)$. As, by Claim II,
$ d((1-\epsilon)y,D_{J_1})<d(y, D_{J_1}) $ we have a contradiction.

Assume now that $z:=x-y $ lies in non-trivial facets of $D_{J_2}$.

Recall that $x = y_0 + z_0$ with $y_0 \in D_{J_1}$, $z_0 \in D_{J_2}$. If $z$ and $z_0$ lie in the same non-trivial facet of $D_{J_2}$, then the line segment $zz_0$ is contained in this facet. On the other hand, there exists $0\le t \le 1$ such that $(1-t)y_0+ ty$ lies in a non-trivial facet of  $D_{J_1}$. Therefore, $x= ((1-t)y_0+ ty) + ((1-t)z_0 + tz)$,   $\rho_{J_1}\boxplus' {\rho_{J_2}} (x)  = \rho_{J_1}((1-t)y_0+ ty) + {\rho_{J_2}}((1-t)z_0 + tz) = 0 $ and so, $(1-t)y_0+ ty \in C_x\cap D_{J_1} $, which is a contradiction.

If $z_0$ does not lie in any of the non-trivial facets of
$D_{J_2}$ containing $z$, let  $\eta^1, \dots, \eta^k$ be inner
normal vectors to these facets and consider the hyperplanes
parallel to them and  containing $y$, which are defined by the
equations  $\eta^\ell.\, (Y-y) = 0$ for $1 \le \ell \le k$.  As
$\eta^\ell.\, y + \eta^\ell.\, z = \eta^\ell.\, y_0 +
\eta^\ell.\, z_0$ and $\eta^\ell.\, z < \eta^\ell.\, z_0$,
then $\eta^\ell.\, y_0 < \eta^\ell.\, y$. In addition, since
all the coordinates of $\eta^\ell$ are negative (see Lemma
\ref{lem:normal<0}) and $y \in (\R_{\ge 0})^{n-1}$, then
$\eta^\ell.\, y<0$. Therefore, the hyperplane  $\eta^\ell\cdot
(Y-y) = 0$ intersects the line segment $0y_0$ in a point
$\lambda_\ell y_0$ with $0\le \lambda_\ell \le 1$. If $\lambda=
\max\{\lambda_\ell \ / \ 1 \le \ell \le k\}$, consider $y_t=
(1-t)y+ t\lambda y_0$ and $z_t = x -y_t$ for $0 \le t \le 1$. For
$t$ sufficiently small, we will show that $z_t \in D_{J_2}$, that
$\rho_{J_1}\boxplus' {\rho_{J_2}} (x)  = \rho_{J_1}(y_t) +
{\rho_{J_2}}(z_t)$ and that $d(y_t, D_{J_1}) < d(y, D_{J_1})$,
which leads to a contradiction.

For $1 \le \ell \le k$, as $\lambda\ge \lambda_\ell$,
$\eta^\ell.\, (y-\lambda y_0) \ge 0$; then  $\eta^\ell\cdot
(y-y_t) \ge 0$ and so $\eta^\ell.\, z_t = \eta^\ell.\, z +
\eta^\ell.\, (y - y_t) \ge \eta^\ell.\, z$. If $z$ lies in a
trivial facet of $D_{J_2}$, that is, $z_i =0$ for some $1\le i \le
n$, then $y_i = x_i$; as $(y_{0} )_i \le x_i$, we have that
$(z_t)_i = t (y_i - \lambda (y_{0})_i) \ge 0$. Taking $t$
sufficiently small, $z_t$ satisfies all the remaining inequalities
defining $D_{J_2}$ and so, $z_t \in D_{J_2}$. Moreover, since
$y\notin D_{J_1}$, for $t$ sufficiently small, $y_t \notin
D_{J_1}$. Then, by Claim I, $\rho_{J_1}(y_t) \le \rho_{J_1}(y)$.
On the other hand,  $z_t$ lies in the same non-trivial facet of
$D_{J_2}$ as $z$, namely, the facet defined by $\eta^{\ell_0}
\cdot(Z- z) = 0$ for $\ell_0$ such that $\lambda =
\lambda_{\ell_0}$ and, therefore, $\rho_{J_2}(z_t) = 0$. We
conclude that $ \rho_{J_1}(y_t) + {\rho_{J_2}}(z_t) =
\rho_{J_1}\boxplus' {\rho_{J_2}} (x)$. Finally, the inequality
$d(y_t, D_{J_1})< d(y, D_{J_1})$ holds by Claim II.
\end{proof}

For every $1\le j \le n$, let $Q_j^0=\conv(\cA_j\cup\{0\})$ and
$\sigma_j^0, \rho_j^0$ the functions that parameterize its upper
and lower envelopes respectively. Assumption (H3) ensures that
$\pi(Q_j^0) = \pi(Q_j)$.

\begin{lemma} \label{lem:piso ext} For every $1\le j \le n$, $\rho_j^0(x)= \begin{cases} 0  & \mbox{if } x\in \pi(\Delta_j) \\ \rho_j(x) & \mbox{if }
x\not\in \pi(\Delta_j) \end{cases}$  and $\sigma_j^0 =\sigma_j$.
\end{lemma}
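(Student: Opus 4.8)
The plan is to prove the two identities separately; in both, the mechanism is that adjoining the origin can only lower the lower envelope and raise the upper one, and that (H3) — through the memberships $\lambda_{ij}e_i\in Q_j\subseteq Q_j^0$, with $\lambda_{ij}\ge 1$ — confines these changes to the simplex $\pi(\Delta_j)$. I would record at the outset that $0\in\pi(Q_j)=\pi(Q_j^0)$ (it is the projection of $\lambda_{nj}e_n$) and $\sigma_j(0)\ge\lambda_{nj}>0$.

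\textbf{Upper envelope.} Since $Q_j\subseteq Q_j^0$, one has $\sigma_j\le\sigma_j^0$ on $\pi(Q_j)$. For the reverse inequality, given $x\in\pi(Q_j)$ and $(x,\sigma_j^0(x))\in Q_j^0$, I would write this point as a convex combination of $0$ and a point of $Q_j$ (if it equals $0$ there is nothing to prove), say $(x,\sigma_j^0(x))=\tfrac1\mu q$ with $\mu\ge 1$ and $q=(q',q_n)\in Q_j$; then $\mu x\in\pi(Q_j)$ and $\sigma_j^0(x)=q_n/\mu\le\sigma_j(\mu x)/\mu$. Since $\sigma_j$ is concave on $[0,\mu x]\subseteq\pi(Q_j)$ and $\sigma_j(0)\ge 0$, $\sigma_j(x)\ge\tfrac1\mu\sigma_j(\mu x)+(1-\tfrac1\mu)\sigma_j(0)\ge\tfrac1\mu\sigma_j(\mu x)\ge\sigma_j^0(x)$, whence $\sigma_j^0=\sigma_j$.

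\textbf{Lower envelope.} From $Q_j\subseteq Q_j^0\subseteq(\R_{\ge 0})^n$ one gets $0\le\rho_j^0\le\rho_j$ on $\pi(Q_j)$. If $x\in\pi(\Delta_j)$, then $x$ is a convex combination of $0\in\R^{n-1}$ and the points $\lambda_{ij}e_i$ ($1\le i\le n-1$); lifting this combination to $\R^n$ with $0,\lambda_{1j}e_1,\dots,\lambda_{n-1,j}e_{n-1}\in Q_j^0$ shows $(x,0)\in Q_j^0$, hence $\rho_j^0(x)=0$. It remains to show $\rho_j^0(x)=\rho_j(x)$ for $x\in\pi(Q_j)\setminus\pi(\Delta_j)$; I may assume this set is non-empty, which (via (H3)) forces $Q_j$ to be $n$-dimensional, since if $\dim Q_j=n-1$ then $Q_j$ lies in the hyperplane through the points $\mu_{ij}e_i$ and one checks directly that $\pi(Q_j)\subseteq\pi(\Delta_j)$. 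So I would pick a supporting hyperplane $H=\{y:\eta\cdot y=c\}$ of $Q_j$ at $(x,\rho_j(x))$ with $\eta_n>0$, so that $c=\min_{Q_j}\eta$ and $c=\sum_{i=1}^{n-1}\eta_i x_i+\eta_n\rho_j(x)$.

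The crucial step will be to show $c\le 0$. If not, then $\lambda_{ij}e_i\in Q_j$ gives $\eta_i\lambda_{ij}=\eta\cdot(\lambda_{ij}e_i)\ge c>0$, so $\eta_i>0$ and $1/\lambda_{ij}\le\eta_i/c$ for $1\le i\le n-1$; using $x_i\ge 0$ and $\eta_n\rho_j(x)\ge 0$, $\sum_{i=1}^{n-1}x_i/\lambda_{ij}\le\frac1c\sum_{i=1}^{n-1}\eta_i x_i=\frac1c(c-\eta_n\rho_j(x))\le 1$, so $x\in\pi(\Delta_j)$ — a contradiction. Hence $c\le 0=\eta\cdot 0$, so $H$ supports $Q_j^0=\conv(Q_j\cup\{0\})$ as well, realizing the same minimum $c$; then $\eta\cdot(z,\rho_j^0(z))\ge c$ for every $z\in\pi(Q_j^0)$ forces $\rho_j^0(z)\ge(c-\sum_{i<n}\eta_i z_i)/\eta_n$, and at $z=x$, where $(x,\rho_j(x))\in H$, this gives $\rho_j^0(x)\ge\rho_j(x)$; combined with $\rho_j^0\le\rho_j$ this yields $\rho_j^0(x)=\rho_j(x)$. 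The heart of the matter is exactly this inequality $c\le 0$: it amounts to the statement that the lower faces of $Q_j$ cut away by adjoining the origin are precisely those projecting into $\pi(\Delta_j)$, and it is there that the precise definition of $\lambda_{ij}$ — and the membership $\lambda_{ij}e_i\in Q_j$ — is used in an essential way; the only other point to double-check is the existence of the supporting hyperplane with $\eta_n>0$, which is harmless in view of (H3).
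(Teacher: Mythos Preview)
Your proof is correct. The treatment of $\sigma_j^0=\sigma_j$ and of $\rho_j^0(x)=0$ for $x\in\pi(\Delta_j)$ is essentially the same as the paper's (you phrase the upper-envelope step via concavity of $\sigma_j$, the paper via the explicit convex combination $tq+(1-t)\lambda_{nj}e_n$, but these are equivalent).

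The case $x\in\pi(Q_j)\setminus\pi(\Delta_j)$ is handled by a genuinely different argument. The paper works \emph{primally}: it writes $(x,\rho_j^0(x))=tq$ with $q\in Q_j$, observes that the ray from $0$ through $(x,\rho_j^0(x))$ hits the non-trivial facet $\conv\{\lambda_{1j}e_1,\dots,\lambda_{nj}e_n\}\subset Q_j$ at some point $q'=t'(x,\rho_j^0(x))$ with $0<t'<1$, and then reads off $(x,\rho_j^0(x))\in[q',q]\subset Q_j$, whence $\rho_j(x)\le\rho_j^0(x)$. You work \emph{dually}: you take a lower supporting hyperplane of $Q_j$ at $(x,\rho_j(x))$ and show its level $c$ is $\le 0$ (the key computation $\sum x_i/\lambda_{ij}\le 1$ is exactly the characterization of $\pi(\Delta_j)$), so the same hyperplane supports $Q_j^0$ and forces $\rho_j^0(x)\ge\rho_j(x)$. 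The paper's route is shorter and avoids the side issues you flag (existence of a supporting hyperplane with $\eta_n>0$, the dimension of $Q_j$); your route makes transparent \emph{why} the change in the lower envelope is confined to $\pi(\Delta_j)$, via the inequality $c\le 0$, and connects nicely to the normal-fan picture. Both are clean; the paper's is a touch more self-contained.
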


\begin{proof}
Since $Q_j \subset Q_j^0$, then $\rho_j^0(x) \le \rho_j (x)$ and $\sigma_j(x) \le
\sigma_j^0(x)$ for every $x\in \pi(Q_j^0)$.

If $x\in \pi(\Delta_j)$, there exists $x_n \ge 0$ such that $(x,x_n)\in \Delta_j$. Then,
$(x, x_n)=\sum_{i=1}^nt_i\lambda_{ij}e_i$, where
$\sum_{i=1}^nt_i=1$ and $t_i \ge 0$ for every $ 1 \le i \le n$.
Taking $y=\sum_{i=1}^{n-1}t_i\lambda_{ij}e_i$, we have that $\pi(y)=x$,
$(y)_n=0$ and $y \in Q_j^0$. Hence, $\rho_j^0(x)=0.$

Consider now $x\in \pi(Q_j)\backslash \pi(\Delta_j)$. Take
$(x,\rho_j^0(x)) \in Q_j^0 = \mbox{conv}(Q_j \cup \{ 0 \})$. Then,
$(x,\rho_j^0(x))=t q$, with $q\in Q_j$ and $0< t \le 1$. Since $(x,\rho_j^0(x))\notin \Delta_j$,  there exists $0<t'<1$ such that $t'(x,\rho_j^0(x))$ lies in the nontrivial facet of $\Delta_j$ and so, $q':= t'(x,\rho_j^0(x))\in Q_j$. Then, the line segment $q q'$ is contained in $Q_j$;
in particular, $(x,\rho_j^0(x))\in Q_j$. It follows that $\rho_j(x)\le
\rho_j^0(x)$.

If $x\in \pi(Q_j)=\pi(Q_j^0)\subset \R^{n-1}$, consider $(x, \sigma_j^0(x))\in Q_j^0$. Then, $(x,\sigma_j^0(x)) =  t q$ with $q\in Q_j$ and $0\le t \le 1$. If $y=tq +(1-t)\lambda_{nj}e_n\in Q_j$, then $\pi(y)=x$ and so, $\sigma_j(x)\ge (y)_n= \sigma_j^0(x) + (1-t)\lambda_{nj} \ge
\sigma_j^0(x)$.
\end{proof}

Now, we can restate the formula for the multiplicity of the origin in Proposition \ref{prop:mult0} as a mixed integral of convex functions:

\begin{theorem}\label{teo:MIcasoTocandoEjes} Let $\cA = (\cA_1, \dots, \cA_n)$ be a family of finite sets in
$(\Z_{\ge 0})^n$ satisfying assumptions (H1) and (H3). Let $\bff = (f_1, \dots, f_n)$ be a generic system of sparse polynomials in $\C[x_1, \dots, x_n]$ supported on  $\cA$. For every $1\le j \le n$, let $\bar \rho_j$ be the convex function defined in (\ref{eq:restricciones}). Then, the origin is an isolated
common zero of $\bff$  and $$\mult_0 (\bff) = MI_n'(\overline{\rho}_1, \dots,
\overline{\rho}_n).$$
\end{theorem}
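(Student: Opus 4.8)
The plan is to reduce the statement to Proposition \ref{prop:mult0} by showing the purely combinatorial identity
\[
MV_n(\cA^0) - MV_n(\cA) = MI_n'(\overline{\rho}_1, \dots, \overline{\rho}_n).
\]
The link between mixed volumes and mixed integrals of the floor/ceiling functions is the standard one recalled in Section \ref{section:general}: for polytopes $Q_1, \dots, Q_n$ in $\R^n$ with $\sigma_j = \sigma_{Q_j}$, $\rho_j = \rho_{Q_j}$, one has $Vol_n(\sum_{j\in J} Q_j) = \int_{\sum_{j\in J}\pi(Q_j)} \bigl(\boxplus_{j\in J}\sigma_j - \boxplus'_{j\in J}\rho_j\bigr)(x)\,dx$, because $\boxplus_{j\in J}\sigma_j$ and $\boxplus'_{j\in J}\rho_j$ parameterize the upper and lower envelopes of $\sum_{j\in J}Q_j$ (Remark \ref{rem:def x piso y techo} and the last line of Section \ref{section:general}). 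Plugging this into the inclusion–exclusion definitions of $MV_n$ and $MI_n'$, and using that the ``upper-envelope'' contributions are identical for $\cA$ and $\cA^0$ by Lemma \ref{lem:piso ext} ($\sigma_j^0 = \sigma_j$ and $\pi(Q_j^0) = \pi(Q_j)$), one gets
\[
MV_n(\cA^0) - MV_n(\cA) = MI_n'(\rho_1, \dots, \rho_n) - MI_n'(\rho_1^0, \dots, \rho_n^0),
\]
so the problem becomes comparing the mixed integral of the lower envelopes of the $Q_j$ with that of the $Q_j^0$, and I must show this difference equals $MI_n'(\overline{\rho}_1, \dots, \overline{\rho}_n)$, the mixed integral of the restrictions of $\rho_j$ to the small simplices $\pi(\Delta_j)$.

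Next I would expand each term $\int_{P_J}\boxplus'_{j\in J}\rho_j$ versus $\int_{P_J}\boxplus'_{j\in J}\rho_j^0$. By Lemma \ref{lem:piso ext}, $\rho_j^0$ agrees with $\rho_j$ outside $\pi(\Delta_j)$ and is $0$ on $\pi(\Delta_j)$; this should propagate to the Minkowski-sum level. Concretely, I expect that $\boxplus'_{j\in J}\rho_j^0$ agrees with $\boxplus'_{j\in J}\rho_j$ on $P_J \setminus D_J$ and is identically $0$ on $D_J = \sum_{j\in J}\pi(\Delta_j)$ — the first because the optimal split for $\boxplus'$ avoids the $\Delta_j$-regions where raising the floor to $0$ only helps, and the second because Lemma \ref{lem:cero en borde}-type reasoning (the functions $\rho_j^0$ vanish on all of $\pi(\Delta_j)$, not just its boundary) forces the sum to vanish on $D_J$. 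Granting this, $\int_{P_J}\boxplus'_{j\in J}\rho_j - \int_{P_J}\boxplus'_{j\in J}\rho_j^0 = \int_{D_J}\boxplus'_{j\in J}\rho_j = \int_{D_J}\overline{\boxplus'_{j\in J}\rho_j} = \int_{D_J}\boxplus'_{j\in J}\overline{\rho}_j$, where the last equality is exactly the content of Lemma \ref{lem:suma pisos}. Summing over $J$ with the inclusion–exclusion signs then yields $MI_n'(\rho_1, \dots, \rho_n) - MI_n'(\rho_1^0, \dots, \rho_n^0) = MI_n'(\overline{\rho}_1, \dots, \overline{\rho}_n)$, which combined with the first display and Proposition \ref{prop:mult0} gives the theorem.

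I anticipate the main obstacle to be the claim that $\boxplus'_{j\in J}\rho_j^0$ coincides with $\boxplus'_{j\in J}\rho_j$ off $D_J$ and vanishes on $D_J$ — i.e. that the ``$0$ on the simplices'' phenomenon lifts cleanly through the infimal-convolution operation $\boxplus'$. For a point $x \in D_J$ one writes $x = \sum_{j\in J}p_j$ with $p_j \in \pi(\Delta_j)$, so $\sum \rho_j^0(p_j) = 0$ and since $\boxplus'_{j\in J}\rho_j^0 \ge 0$ (each $\rho_j^0 \ge 0$, as $0$ is a vertex and $Q_j^0 \subset (\R_{\ge 0})^n$... actually $\rho_j^0 \ge \rho_j^0$ of a vertex), one concludes it is $0$ there; this is essentially Lemma \ref{lem:cero en borde} with ``facet'' replaced by ``all of $\pi(\Delta_j)$'' and should go through verbatim. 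For $x \notin D_J$ the argument is the more delicate one: given an optimal split $x = \sum y_j$ for $\boxplus'_{j\in J}\rho_j^0$, if some $y_j \in \pi(\Delta_j)$ I would use Claim I/Claim III of Lemma \ref{lem:suma pisos} (slide $y_j$ toward $0$ along a nontrivial facet, absorbing the displacement into another coordinate) to show the value is unchanged while reducing to a split with fewer components in the $\Delta$-regions, and then invoke Lemma \ref{lem:piso ext} to replace each $\rho_j^0(y_j)$ by $\rho_j(y_j)$; alternatively, one can argue geometrically that the lower envelope of $Q_J^0 = \sum_{j\in J}Q_j^0$ agrees with that of $Q_J$ exactly over $P_J \setminus D_J$, using that $Q_J^0 = \conv(Q_J \cup \bigcup_{\text{subsets}}(\dots))$ is obtained from $Q_J$ by coning pieces toward the coordinate directions, and Lemma \ref{lem:piso ext} applied componentwise after the Minkowski sum is decomposed. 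Either route is a routine-but-careful convex-geometry verification of the same flavour as the already-proven lemmas, so I would state it as a short lemma and give the facet-sliding proof.
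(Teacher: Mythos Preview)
Your approach is essentially the same as the paper's. Both reduce to Proposition~\ref{prop:mult0} and then establish the combinatorial identity $MV_n(\cA^0)-MV_n(\cA)=MI_n'(\overline\rho_1,\dots,\overline\rho_n)$ term by term over subsets $J$, using Lemma~\ref{lem:suma pisos} to identify $\boxplus'_{j\in J}\overline\rho_j$ with $(\boxplus'_{j\in J}\rho_j)|_{D_J}$ and Lemma~\ref{lem:piso ext} to compare $\rho_j$ with $\rho_j^0$.

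The only organizational difference is the direction of the computation: the paper starts from $\int_{D_J}\boxplus'_{j\in J}\overline\rho_j$, rewrites it as a difference of volumes over $D_J$, and then enlarges the domain to $P_J$ to recognize $Vol_n(\sum_{j\in J}Q_j^0)-Vol_n(\sum_{j\in J}Q_j)$; you instead first write $MV_n(\cA^0)-MV_n(\cA)=MI_n'(\rho_1,\dots,\rho_n)-MI_n'(\rho_1^0,\dots,\rho_n^0)$ and then localize the difference to $D_J$. These are the same identity read in opposite directions. In particular, the step you single out as the ``main obstacle'' --- that $\boxplus'_{j\in J}\rho_j^0$ vanishes on $D_J$ and agrees with $\boxplus'_{j\in J}\rho_j$ on $P_J\setminus D_J$ --- is precisely the content of the paper's passage ``Lemma~\ref{lem:piso ext} implies that $Vol_n((D_J)_{0,\nu_J})-\cdots = Vol_n((P_J)_{\boxplus'\rho_j^0,\nu_J})-\cdots$'', which the paper asserts without a separate argument. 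Your explicit flagging of this as a Minkowski-sum analogue of Lemma~\ref{lem:piso ext}, with a sketched facet-sliding justification, is if anything more careful than the paper on this point.
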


\begin{proof} By Proposition \ref{prop:mult0}, it suffices to show that
$$MV_n(\cA^0)-MV_n(\cA) = MI_n'(\overline{\rho}_1, \dots,
\overline{\rho}_n).$$

For every $1 \le j \le n$, consider $\nu_j \in \R$
such that $\nu_j \ge \max(\rho_j)\ge \max(\overline{\rho}_j)$.

For $J\subset \{1,\dots, n\}$, let $D_J =\sum_{j\in J}\pi(\Delta_j)$ and $\nu_J= \sum_{j \in J}\nu_j$. Since $\nu_J \ge \max (\boxplus'_{j\in J}\overline{\rho}_j)$, we have that
$$\int_{D_J}\boxplus'_{j\in J}\overline{\rho}_j\ dx_1\dots dx_n = \nu_J Vol_{n-1}(D_J)-
Vol_n\left((D_J)_{\boxplus_{j\in J}'\overline{\rho}_j,\nu_J}\right).$$
Then, by Lemma \ref{lem:suma pisos},
$$ \int_{D_J}\boxplus'_{j\in J}\overline{\rho}_j\, dx_1\dots dx_n
= Vol_n\left( (D_J)_{0, \nu_J}\right)- Vol_n\left((D_J)_{(\boxplus_{j\in J}'\rho_j)|_{D_J},\nu_J}\right).$$
Now, if $P_J= \sum_{j\in J}\pi(Q_j^0) =\sum_{j\in J}\pi(Q_j)$, Lemma \ref{lem:piso ext} implies that
$$Vol_n\left( (D_J)_{0, \nu_J}\right)- Vol_n\left((D_J)_{(\boxplus_{j\in J}'\rho_j)|_{D_J},\nu_J}\right)={}$$ $${}= Vol_n( (P_J)_{\boxplus_{j\in J}'\rho_j^0,\nu_J}) - Vol_n((P_J)_{\boxplus_{j\in J}'\rho_j,\nu_J}) ={} $$
$$ {}= Vol_n\left((P_J)_{\boxplus_{j\in J}'\rho_j^0,\boxplus_{j\in J}\sigma_j^0}\right) -
Vol_n\left((P_J)_{\boxplus_{j\in J}'\rho_j,\boxplus_{j\in J}\sigma_j}\right).$$
Finally, by Remark \ref{rem:def x piso y  techo},
$$ Vol_n\left((P_J)_{\boxplus_{j\in J}'\rho_j^0,\boxplus_{j\in J}\sigma_j^0}\right)=
Vol_n\Big(\sum_{j\in J} Q_j^0\Big)$$  and $$ \ Vol_n\left((P_J)_{\boxplus_{j\in J}'\rho_j,\boxplus_{j\in J}\sigma_j}\right)= Vol_n\Big(\sum_{j\in J}Q_j\Big),$$
and so,
$$\int_{D_J}\boxplus'_{j\in J}\overline{\rho}_j\, dx_1\dots dx_n
=Vol_n\Big(\sum_{j\in J} Q_j^0\Big) - Vol_n\Big(\sum_{j\in J}Q_j\Big).$$

The theorem follows from the definitions of the mixed integral and the mixed volume.
\end{proof}

\begin{example} \label{ex2} Consider the generic sparse polynomial system
$$\begin{array}{l}
f_1 = c_{1,20} x_1^2 +c_{1,11} x_1 x_2+c_{1,04}x_2^4 +c_{1,13} x_1 x_2^3+c_{1,33} x_1^3 x_2^3 \\
f_2 = c_{2,40} x_1^4 +c_{2,21} x_1^2  x_2+c_{2,04}x_2^4 +c_{2,25} x_1^2 x_2^5+c_{2,13} x_1 x_2^3
\end{array}$$
with supports
$$
\cA_1 =\{(2,0), (1,1),(0,4), (1,3),(3,3)\}, \qquad
\cA_2 =\{(4,0), (2,1),(0,4), (2,5),(1,3)\}.
$$
Here, we have $\Delta_1 = \conv\{ (0,0), (2,0), (0,4)\}$ and $\Delta_2= \conv\{(0,0), (4,0), (0,4)\}$.

\begin{center}
\begin{tikzpicture}[scale=0.6]
\draw[fill=blue, color=blue] (0,4)--(1,1)--(2,0)--(3,3)--(0,4);
\draw[color=black] (0,4)--(1,1)--(2,0)--(3,3)--(0,4);
\draw[-latex,color=darkgray,thin] (-1,0) -- (4,0);
\draw[-latex,color=darkgray,thin] (0,-1) -- (0,6);
\draw[color=blue, very thick] (0,0)--(2,0);
\draw[shift={(2,3.7)}] node[above] {\textcolor{blue}{$\conv(\cA_1)$}};
\draw[shift={(1,0)}] node[below] {\textcolor{blue}{$\pi(\Delta_1)$}};
\end{tikzpicture} \hspace{2cm}
\begin{tikzpicture}[scale=0.6]
\draw[color=black, fill=red] (0,4)--(2,1)--(4,0)--(2,5)--(0,4);
\draw[-latex,color=darkgray,thin] (-1,0) -- (5,0);
\draw[-latex,color=darkgray,thin] (0,-1) -- (0,6);
\draw[color=red, very thick] (0,0)--(4,0);
\draw[shift={(2.5,4.8)}] node[above] {\textcolor{red}{$\conv(\cA_2)$}};
\draw[shift={(2,0)}] node[below] {\textcolor{red}{$\pi(\Delta_2)$}};
\end{tikzpicture}
\end{center}

To compute the multiplicity of the origin following Theorem \ref{teo:MIcasoTocandoEjes}, consider the convex functions $\overline \rho_1 : \pi(\Delta_1) \to \R$ and $\overline \rho_2 : \pi(\Delta_2) \to \R$:

\begin{center}
\begin{tikzpicture}[scale=0.6]
\draw[-latex,color=darkgray,thin] (-1,0) -- (4,0);
\draw[-latex,color=darkgray,thin] (0,-1) -- (0,6);
\draw[color=blue, ultra thick] (0,4)--(1,1)--(2,0);
\draw[fill=blue!40] (0,0)--(0,4)--(1,1)--(2,0)--(0,0);
\draw[shift={(1,2.5)}] node[above] {\textcolor{blue}{{$\overline{\rho}_1$}}};
\end{tikzpicture}\hspace{6mm}
\begin{tikzpicture}[scale=0.6]
\draw[-latex,color=darkgray,thin] (-1,0) -- (5,0);
\draw[-latex,color=darkgray,thin] (0,-1) -- (0,6);
\draw[shift={(0,0)},color=red, ultra thick] (0,4)--(2,1)--(4,0);
\draw[fill=magenta!30] (0,0)--(0,4)--(2,1)--(4,0)--(0,0);
\draw[shift={(1.5,2.5)}] node[above] {\textcolor{red}{{ $\overline{\rho}_2$}}};
\end{tikzpicture} \hspace{8mm}
\begin{tikzpicture}[scale=0.6]
\draw[shift={(0,5)},fill=blue!40, color=blue!40] (0,0)--(1,0)--(0,3)--(0,0);
\draw[shift={(0,1)},fill=blue!40, color=blue!40] (0,0)--(1,0)--(1,1)--(0,1)--(0,0);
\draw[shift={(3,1)},fill=blue!40, color=blue!40] (0,0)--(1,0)--(0,1)--(0,0);
\draw[shift={(4,0)},fill=magenta!30, color=magenta!30] (0,0)--(2,0)--(0,1)--(0,0);
\draw[shift={(1,0)},fill=magenta!30, color=magenta!30] (0,0)--(2,0)--(2,1)--(0,1)--(0,0);
\draw[shift={(1,2)},fill=magenta!30, color=magenta!30] (0,0)--(2,0)--(0,3)--(0,0);
\draw[-latex,color=darkgray,thin] (-1,0) -- (7,0);
\draw[-latex,color=darkgray,thin] (0,-1) -- (0,9);
\draw[shift={(0,0)},color=black, ultra thick] (0,8)--(1,5)--(3,2)--(4,1)--(6,0);
\draw[shift={(3.5,4)}] node {{$\overline{\rho}_1\boxplus \overline{\rho}_2$}};
\end{tikzpicture}
\end{center}
Therefore,
$$\mult_0(\bff) = \displaystyle MI'_2(\overline{\rho}_1, \overline{\rho}_2) = \int_0^6 \overline{\rho}_1\boxplus \overline{\rho}_2 (x)\, dx - \int_0^2 \overline{\rho}_1(x)\,  dx - \int_0^4 \overline{\rho}_2 (x) \, dx=  7.$$

\end{example}

\begin{remark} The computation of the multiplicity of the origin by means of mixed integrals following Theorem \ref{teo:MIcasoTocandoEjes} may involve smaller polytopes than its computation using mixed volumes according to Proposition \ref{prop:mult0}, since it depends only on the points of the lower envelopes of the polytopes $Q_j=\conv(\cA_j)$  that lie above the simplices $\pi(\Delta_j)$ for $j=1,\dots, n$.

Following \cite{HS97}, this computation can also be done by locating the stable mixed cells with positive inner normals in a subdivision of $\cA^0$ induced by a suitable lifting, and computing and adding the mixed volumes of those cells, which may also involve smaller polytopes. Moreover, the proof of \cite[Theorem 2]{HS97} implies that the mixed integral in Theorem \ref{teo:MIcasoTocandoEjes} also counts the number of Puiseux series expansions around the origin of the solution set of the system under generic perturbation of the constant terms of the polynomials.
\end{remark}

\subsection{General case} \label{subsec:generalcase}

Consider now a family $\cA=(\cA_1, \dots, \cA_n)$ of finite sets
in $(\Z_{\ge 0})^n$ satisfying conditions (H1) and (H2).
 Let $\bff=(f_1, \dots, f_n)$  be a system of generic sparse polynomials in $\C[x_1, \dots,
x_n]$ supported on $\cA$.

For $M\in \Z_{> 0}$, let $\Delta_M:= \{ M e_i\}_{i=1}^n$ and,  for all $ 1 \le j \le n$, let
$\cA_{j}^{\Delta_M}:=\cA_j\cup \Delta_M$ and $\cA_{j}^{\Delta_M,0}:=\cA_{j}^{\Delta_M}\cup \{ 0\}$. Set $\cA^{\Delta_M}:= (\cA_1^{\Delta_M},\dots,\cA_n^{\Delta_M})$ and $\cA^{\Delta_M,0}:=(\cA_1^{\Delta_M,0},\dots,\cA_n^{\Delta_M,0})$.

\begin{proposition} \label{prop:Mgrande} With the previous assumptions and notation, we have that $0$ is an isolated common zero of $\bff$ and, for every $M\gg 0$, its multiplicity is
$$\mult_0(\bff)= MV_n(\cA^{\Delta_M,0}) -
MV_n(\cA^{\Delta_M}).$$
Moreover, the identity holds for every $M> \mult_0(\bff)$. In particular, it suffices to take $M= MV_n(\cA^0) - MV_n(\cA) +1$.
\end{proposition}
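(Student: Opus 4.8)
The idea is to reduce the general case to the particular case already solved in Proposition~\ref{prop:mult0} by enlarging the supports so that assumption (H3) holds, while keeping the multiplicity of the origin unchanged. The family $\cA^{\Delta_M}$ obtained by adding the points $Me_1,\dots,Me_n$ to every $\cA_j$ satisfies (H1) trivially (the new points are nonzero) and (H3) (each $\cA_j^{\Delta_M}$ contains the pure power $Me_i$ for every $i$), so Proposition~\ref{prop:mult0} applies to a generic system $\bfg=(g_1,\dots,g_n)$ supported on $\cA^{\Delta_M}$ and yields $\mult_0(\bfg)=MV_n(\cA^{\Delta_M,0})-MV_n(\cA^{\Delta_M})$. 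Thus the whole statement follows once I prove that, for $M$ large enough, $\mult_0(\bff)=\mult_0(\bfg)$, i.e. that adding the extreme monomials $x_i^M$ with generic coefficients does not change the multiplicity of the origin.

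\textbf{Key steps.} First I would record that, since (H1) and (H2) hold for $\cA$, \cite[Proposition 6]{HJS13} guarantees $0$ is an isolated zero of the generic system $\bff$; this is the first assertion. Next, I would compare the two systems via the multiplicity matrices $S_k(\cdot,0)$ of Section~\ref{sec:DZ}. A generic $g_j$ can be written as $g_j=f_j+d_j x_j^{M}$ (more precisely, $g_j$ has the same monomials as $f_j$ plus the extra terms $d_{j,i}x_i^M$; it suffices to treat the case where for each $j$ one adds $x_j^M$, or one handles all $n$ extra monomials per polynomial — the argument is identical), where $d_j$ is a new generic coefficient and $M$ is large. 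The crucial observation is that the entries of $S_k(\bfg,0)$ are partial derivatives of order $\le k$ of the polynomials $x^\beta g_j$ evaluated at $0$; a monomial $x_i^M$ contributes a nonzero such derivative only if it is "hit" by a derivative $\partial_\alpha$ with $|\alpha|\le k$ and $\alpha\ge Me_i$ in the relevant coordinate, which forces $k\ge M$. Hence for every $k< M$ we have $S_k(\bfg,0)=S_k(\bff,0)$, so $\dim\ker S_k(\bfg,0)=\dim\ker S_k(\bff,0)$ for all $k<M$. Now I invoke the stabilization property from \cite[Lemma 1]{DZ05}: writing $m=\mult_0(\bff)$, the dimensions $\dim\ker S_k(\bff,0)$ strictly increase until they reach $m$, which happens at some $k_0\le m$ (because each strict increase is by at least $1$, starting from $\dim\ker S_0=1$... more carefully, $k_0\le m-1$ since $\dim\ker S_{k}$ takes $m$ distinct values in $\{1,\dots,m\}$ as $k$ ranges over $0,\dots,k_0$); thus for $M>m$ we have $k_0<M$, and the equality $S_{k_0}(\bfg,0)=S_{k_0+1}(\bfg,0)$-agreement-with-$\bff$ forces $\dim\ker S_{k_0}(\bfg,0)=\dim\ker S_{k_0+1}(\bfg,0)=m$, which by the Proposition in Section~\ref{sec:DZ} gives $\mult_0(\bfg)=m=\mult_0(\bff)$. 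Finally, \eqref{eq:cotamult} gives $\mult_0(\bff)\le MV_n(\cA^0)-MV_n(\cA)$, so $M=MV_n(\cA^0)-MV_n(\cA)+1$ works.

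\textbf{Main obstacle.} The only genuinely delicate point is the bookkeeping in the claim that $S_k(\bfg,0)$ and $S_k(\bff,0)$ coincide for $k<M$: one must check that the added monomials $x_i^M$ with $M$ exceeding the differential order cannot affect any entry $\partial_\alpha(x^\beta g_j)(0)$, and that this persists after multiplying by $x^\beta$ with $|\beta|\le k-1$ — but since multiplying by $x^\beta$ only raises the total degree of each monomial, a term of degree $M$ in $g_j$ becomes a term of degree $\ge M$ in $x^\beta g_j$, and a nonzero contribution to the value at $0$ of $\partial_\alpha$ of such a term needs $|\alpha|\ge M>k$, a contradiction. A secondary subtlety is that one should make sure the genericity of $\bfg$ is compatible with fixing the original coefficients of $\bff$ to be generic; this is immediate because the set of coefficient vectors attaining $\mu_{\cA^{\Delta_M}}$ is Zariski open and dense (Lemma~\ref{lem:multgen}), and one may choose it to contain any prescribed generic choice of the old coefficients, while $\mult_0(\bff)$ is likewise the generic value $\mu_\cA$. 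Once these verifications are in place, the chain of equalities $\mult_0(\bff)=\mult_0(\bfg)=MV_n(\cA^{\Delta_M,0})-MV_n(\cA^{\Delta_M})$ completes the proof, and the explicit bound $M=MV_n(\cA^0)-MV_n(\cA)+1$ follows from \eqref{eq:cotamult}.
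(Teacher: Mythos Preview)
Your proposal is correct and follows essentially the same route as the paper: enlarge the supports to $\cA^{\Delta_M}$ so that (H3) holds, apply Proposition~\ref{prop:mult0} to the perturbed system $\bfg$, and then show $\mult_0(\bff)=\mult_0(\bfg)$ by observing that the multiplicity matrices $S_k(\bff,0)$ and $S_k(\bfg,0)$ coincide for all $k\le M-1$ (since the added monomials $x_i^M$ have degree exceeding the differential order) and invoking the stabilization at $k_0<M$; the explicit bound on $M$ then comes from \eqref{eq:cotamult}. One small wrinkle: your parenthetical ``it suffices to treat the case where for each $j$ one adds $x_j^M$'' is not quite right, since (H3) requires \emph{every} $\cA_j^{\Delta_M}$ to meet \emph{every} axis, so you do need all $n$ extra monomials per polynomial as you yourself note immediately afterward.
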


\begin{proof} Conditions (H1) and (H2) imply that $0$ is an isolated common zero of the generic system $\bff$ supported on $\cA$.

Take  $M> \mult_0(\bff)$  and consider polynomials
$$g_j=f_j+\sum_{i=1}^n c_{j,Me_i}\, x_i^{M}$$ with support sets
$\cA_{j}^{\Delta_M}=\cA_j\cup \Delta_M$  and generic coefficients for all $ 1 \le j \le n$.

Since $\cA^{\Delta_M}= (\cA_1^{\Delta_M},\dots,\cA_n^{\Delta_M}) $ fulfills the conditions (H1) and (H3) stated in Section \ref{section:toca ejes}, by Proposition \ref{prop:mult0} the multiplicity of the origin as a common isolated root of
$\bfg:= (g_1, \dots, g_n)$ is $\mult_0(\bfg)=MV_n(\cA^{\Delta_M,0}) -
MV_n(\cA^{\Delta_M}).$

Let us prove that $\mult_0(\bff) = \mult_0(\bfg)$. To do so, we consider the matrices $S_k(\bff,0)$ and $S_k(\bfg,0)$, for $k\ge 0$, introduced in Section \ref{sec:DZ}. Note that, since $M>\mult_0(\bff)$, in order to compute $\mult_0(\bff)$, it suffices to compare the dimensions of the nullspaces of the  matrices $S_k(\bff,0)$ for $0\le k \le M-1$. Now, for every $k\le M-1$, $\alpha, \beta \in (\Z_{\ge 0})^n$, with $|\alpha| \le k$ and $|\beta|\le k-1$, and every $1\le j \le n$, we have that
$$ (S_k(\bff,0))_{(\beta, j) , \alpha} = \dfrac{1}{\alpha!}\dfrac{\partial^{|\alpha|}}{\partial x^\alpha} (x^\beta f_j) (0) = \dfrac{1}{\alpha!}\dfrac{\partial^{|\alpha|}}{\partial x^\alpha} (x^\beta g_j) (0)=  (S_k(\bfg,0))_{(\beta, j) , \alpha}.$$
Since the dimensions of the nullspaces of $S_k(\bff,0)= S_k(\bfg,0)$ stabilize for $k<M$, then, $\mult_0(\bff)= \mult_0(\bfg)$.

The fact that we can take $M= MV_n(\cA^0) - MV_n(\cA) +1$ follows from inequality (\ref{eq:cotamult}).
\end{proof}

From the previous result and Theorem \ref{teo:MIcasoTocandoEjes} we can
express the multiplicity of the origin as an isolated zero of a generic sparse
system via mixed integrals:

\begin{corollary} \label{cor:mult0MI} Let $\cA =(\cA_1, \dots, \cA_n)$ be a family of finite sets in
$(\Z_{\ge 0})^n$ satisfying assumptions (H1) and (H2).   Let $\bff= (f_1, \dots, f_n) $  be
a generic family of polynomials in $\C[x_1, \dots, x_n]$ supported on $\cA$. Let
$M:=MV_n(\cA^0)- MV_n(\cA)+1$ and, for $1\le j \le n$, let $\rho_{j}^{\Delta_M}$ be the
convex function that parameterizes the lower envelope of the
polytope $\conv(\cA_j^{\Delta_M})$ and $\overline{\rho_{j}}^{\Delta_M}$ its restriction defined as in (\ref{eq:restricciones}). Then, $$\mult_0(\bff) = MI_n'(\overline{\rho_1}^{\Delta_M}, \dots,
\overline{\rho_n}^{\Delta_M}).$$
\end{corollary}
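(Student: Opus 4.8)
The plan is to combine the two main results already established in this section. By Proposition \ref{prop:Mgrande}, choosing $M = MV_n(\cA^0) - MV_n(\cA) + 1$, we have $M > \mult_0(\bff)$, and hence
$$\mult_0(\bff) = MV_n(\cA^{\Delta_M,0}) - MV_n(\cA^{\Delta_M}).$$
Now I would observe that the family $\cA^{\Delta_M} = (\cA_1^{\Delta_M},\dots,\cA_n^{\Delta_M})$ satisfies assumptions (H1) and (H3): indeed, $0 \notin \cA_j^{\Delta_M}$ because $0 \notin \cA_j$ by (H1) and $0 \notin \Delta_M$ since $M > 0$; and for every $1 \le i,j \le n$ we have $Me_i \in \Delta_M \subset \cA_j^{\Delta_M}$, which is exactly (H3) with $\mu_{ij} = M$. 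This observation is already made inside the proof of Proposition \ref{prop:Mgrande}, so it can simply be invoked.

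Having verified (H1) and (H3) for $\cA^{\Delta_M}$, I would apply Theorem \ref{teo:MIcasoTocandoEjes} to the family $\cA^{\Delta_M}$. The theorem states that for a generic system supported on $\cA^{\Delta_M}$, the multiplicity of the origin equals $MI_n'(\overline{\rho_1}^{\Delta_M}, \dots, \overline{\rho_n}^{\Delta_M})$, where $\rho_j^{\Delta_M}$ is the convex function parameterizing the lower envelope of $\conv(\cA_j^{\Delta_M}) = Q_j^{\Delta_M}$ and $\overline{\rho_j}^{\Delta_M}$ is its restriction to $\pi(\Delta_j^{M})$ with $\Delta_j^{M} = \conv\{0, \lambda_{1j}^{M} e_1, \dots, \lambda_{nj}^{M} e_n\}$ and $\lambda_{ij}^{M} = \min\{\mu \in \N : \mu e_i \in Q_j^{\Delta_M}\}$, as in (\ref{eq:lambdaij}) and (\ref{eq:restricciones}) applied to $\cA^{\Delta_M}$. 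But the proof of Proposition \ref{prop:mult0} (via Proposition \ref{prop:Mgrande}) already identifies $\mult_0(\bfg) = \mult_0(\bff)$ for the generic system $\bfg$ supported on $\cA^{\Delta_M}$ obtained by adding the monomials $x_i^M$ to $\bff$, so chaining the three equalities $\mult_0(\bff) = \mult_0(\bfg) = MI_n'(\overline{\rho_1}^{\Delta_M}, \dots, \overline{\rho_n}^{\Delta_M})$ gives the statement.

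The only point requiring a small amount of care is to make sure that the notation $\overline{\rho_j}^{\Delta_M}$ in the statement of the corollary agrees with the one produced by applying Theorem \ref{teo:MIcasoTocandoEjes} to $\cA^{\Delta_M}$ — that is, that the simplex entering the restriction is indeed $\Delta_j^M = \conv\{0,\lambda_{1j}^M e_1,\dots,\lambda_{nj}^M e_n\}$ built from $\cA_j^{\Delta_M}$ rather than from $\cA_j$. Since $\conv(\cA_j^{\Delta_M}) \supseteq \conv(\cA_j)$, the values $\lambda_{ij}^M$ satisfy $\lambda_{ij}^M \le \min(\lambda_{ij}, M)$, but this is exactly the definition intended in the corollary statement (the functions $\rho_j^{\Delta_M}$ and their restrictions are defined "as in (\ref{eq:restricciones})" applied to the new family), so no discrepancy arises. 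I expect no genuine obstacle here: the corollary is a direct splicing of Proposition \ref{prop:Mgrande} and Theorem \ref{teo:MIcasoTocandoEjes}, and the proof is essentially one line once the hypotheses (H1) and (H3) for $\cA^{\Delta_M}$ are recorded.

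\begin{proof}
By Proposition \ref{prop:Mgrande}, since $M = MV_n(\cA^0) - MV_n(\cA) + 1 > \mult_0(\bff)$, the origin is an isolated common zero of $\bff$ and
$$\mult_0(\bff) = MV_n(\cA^{\Delta_M,0}) - MV_n(\cA^{\Delta_M}).$$
As observed in the proof of that proposition, the family $\cA^{\Delta_M} = (\cA_1^{\Delta_M},\dots,\cA_n^{\Delta_M})$ satisfies assumptions (H1) and (H3): we have $0 \notin \cA_j \cup \Delta_M$ because $0 \notin \cA_j$ and $M > 0$, and $Me_i \in \Delta_M \subset \cA_j^{\Delta_M}$ for all $1 \le i,j\le n$. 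Hence Theorem \ref{teo:MIcasoTocandoEjes}, applied to a generic system supported on $\cA^{\Delta_M}$, gives that the multiplicity of the origin as an isolated zero of such a system equals $MI_n'(\overline{\rho_1}^{\Delta_M}, \dots, \overline{\rho_n}^{\Delta_M})$, where $\rho_j^{\Delta_M}$ parameterizes the lower envelope of $\conv(\cA_j^{\Delta_M})$ and $\overline{\rho_j}^{\Delta_M}$ is its restriction defined as in (\ref{eq:restricciones}). On the other hand, the proof of Proposition \ref{prop:Mgrande} shows that for the generic system $\bfg = (g_1,\dots,g_n)$ with $g_j = f_j + \sum_{i=1}^n c_{j,Me_i}x_i^M$, which is supported on $\cA^{\Delta_M}$, one has $\mult_0(\bfg) = \mult_0(\bff)$. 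Combining these facts,
$$\mult_0(\bff) = \mult_0(\bfg) = MI_n'(\overline{\rho_1}^{\Delta_M}, \dots, \overline{\rho_n}^{\Delta_M}),$$
as claimed.
\end{proof}
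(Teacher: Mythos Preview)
Your proof is correct and follows exactly the approach the paper intends: the corollary is stated immediately after Proposition \ref{prop:Mgrande} with the remark that it follows ``from the previous result and Theorem \ref{teo:MIcasoTocandoEjes}'', and no separate proof is given. Your write-up simply makes explicit the chain $\mult_0(\bff)=\mult_0(\bfg)=MI_n'(\overline{\rho_1}^{\Delta_M},\dots,\overline{\rho_n}^{\Delta_M})$ via the verification that $\cA^{\Delta_M}$ satisfies (H1) and (H3), which is precisely what the paper has in mind.
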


The following property enables us to deal with smaller support sets when computing multiplicities.

\begin{proposition} \label{prop:sacar monomios}
Let $\bff = (f_1, \dots, f_n)$ be a generic system of polynomials in
$\C[x_1, \dots, x_n]$ supported on a family $\cA= (\cA_1,
\dots, \cA_n)$ of finite subsets of $(\Z_{\ge 0})^n$. Assume that $0$ is an isolated
common zero of $\bff$. Let $f_1 = \sum_{a\in\cA_1}c_{1,a} x^a$.
If $\alpha, \alpha + \beta \in \cA_1$ with $\beta \in (\Z_{\ge 0})^n\setminus\{ 0 \}$, then
$$\mult_0(\bff) = \mult_0(f_1 -c_{1,\alpha+\beta}x^{\alpha+\beta}, f_2,\dots, f_n).$$
\end{proposition}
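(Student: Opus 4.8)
The plan is to exploit the combinatorial characterization of genericity and the fact that, for a generic system supported on $\cA$, the coefficient $c_{1,\alpha+\beta}$ is a free parameter. Write $\tilde{f}_1 = f_1 - c_{1,\alpha+\beta}x^{\alpha+\beta}$ and $\tilde{\bff} = (\tilde f_1, f_2,\dots, f_n)$. The key observation is that $\tilde f_1$ factors as $\tilde f_1 = f_1 - c_{1,\alpha+\beta}x^{\alpha+\beta}$, and the monomial $x^{\alpha+\beta} = x^\alpha \cdot x^\beta$ shares the factor $x^\alpha$ with another monomial of $f_1$. The first step is to make this precise: since $\alpha, \alpha+\beta \in \cA_1$, we can write $f_1 = x^{\alpha}\big(c_{1,\alpha} + c_{1,\alpha+\beta}x^\beta + \text{(other terms, possibly not divisible by }x^\alpha)\big)$ — actually one should be careful, as not every monomial of $f_1$ need be divisible by $x^\alpha$. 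So instead I would argue directly at the level of ideals and local rings.

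First I would reduce to comparing the two systems as generic members of the \emph{same} family of supports. Observe that $\tilde{\bff}$ is a generic system supported on $\cA' = (\cA_1\setminus\{\alpha+\beta\}, \cA_2,\dots,\cA_n)$, \emph{and} it is also obtainable as a specialization of a system supported on $\cA$ (by setting $c_{1,\alpha+\beta}=0$, which is a proper closed condition). The delicate point is that $\mu_{\cA}$ is achieved on a Zariski-open set by Lemma \ref{lem:multgen}, but the hyperplane $\{c_{1,\alpha+\beta}=0\}$ might lie in the \emph{complement} of that open set, so I cannot simply say "a generic point of the hyperplane is generic in $\cA$." The resolution is to work with the multiplicity matrices $S_k(\bff,0)$ directly, as in the proof of Proposition \ref{prop:Mgrande}. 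For $k\ge 0$, the entries of $S_k(\bff,0)$ are polynomials in the coefficients $\mathbf c$; the rank of $S_k$, and hence $\dim\ker S_k$, is a lower-semicontinuous (resp. upper-semicontinuous) function on coefficient space, so it attains its generic value on a nonempty Zariski-open set. I would show that this generic value is the same whether we range over all of $\C^{\#\cA_1}\times\cdots\times\C^{\#\cA_n}$ or over the hyperplane $H = \{c_{1,\alpha+\beta}=0\}$.

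The heart of the argument — and the step I expect to be the main obstacle — is to prove that the rank of $S_k(\bff,0)$, for generic $\mathbf c$, does not drop when we restrict to $H$. The idea is a column/row operation: the monomial $x^{\alpha+\beta}$ appears in $f_1$, and for any $\gamma$, the polynomial $x^\gamma f_1$ contains the term $c_{1,\alpha+\beta}x^{\gamma+\alpha+\beta}$. I would exhibit an explicit invertible linear change on the rows of $S_k$ — namely, replace the row indexed by $(\gamma, 1)$ (corresponding to $x^\gamma f_1$) by the row of $x^\gamma(f_1 - c_{1,\alpha+\beta}x^{\alpha+\beta})$ minus $c_{1,\alpha+\beta}$ times the row of $x^{\gamma+\beta}f_1$ — but this is circular since $x^{\gamma+\beta}f_1$ still contains $x^{\gamma+\beta+\alpha+\beta}$. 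A cleaner route: use that $x^{\alpha+\beta}$ being in the ideal generated along with $x^\alpha$... Actually the correct mechanism is that, since $\alpha$ and $\alpha+\beta$ are both in $\cA_1$ and $0$ is an isolated zero, one can show $x^{\alpha+\beta}$ lies in the ideal $(x^\alpha f_1, f_2, \dots, f_n)$ locally at $0$ up to high order, or more simply: the ideals $\mathcal I = (f_1,\dots,f_n)$ and $\tilde{\mathcal I} = (\tilde f_1, f_2,\dots,f_n)$ have the same image in $\C[x]_{\mathfrak m_0}/\mathfrak m_0^{M}$ for $M$ large, because $f_1 - \tilde f_1 = c_{1,\alpha+\beta}x^{\alpha}\cdot x^{\beta}$ and — here is the crux — the factor $x^\alpha$ already appears attached to a \emph{unit} contribution? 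No. Let me instead finish with the cleanest available idea: I would argue that $\mult_0(\bff) = \mult_0(\tilde{\bff})$ by showing both local rings are isomorphic, via the fact that modulo $\tilde{\mathcal I}$, one has $f_1 \equiv c_{1,\alpha+\beta}x^{\alpha+\beta}$, and then invoking that $x^{\alpha+\beta} = x^\beta \cdot x^\alpha$ with $x^\alpha$ dividing a term of $\tilde f_1 = f_1 - c_{1,\alpha+\beta}x^{\alpha+\beta}$ (namely $c_{1,\alpha}x^\alpha$), so that in $\C[x]_{\mathfrak m_0}/\tilde{\mathcal I}$ we can solve for $x^\alpha$ as $x^\alpha = -c_{1,\alpha}^{-1}(\text{rest})$, substitute, and check the nilpotent structure is unchanged — I would carry out this substitution carefully and conclude $\mathcal I \C[x]_{\mathfrak m_0} = \tilde{\mathcal I}\C[x]_{\mathfrak m_0}$, whence the multiplicities agree. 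The semicontinuity argument of the previous paragraph then transfers this equality of multiplicities from the hyperplane $H$ to the generic system supported on $\cA$, completing the proof.
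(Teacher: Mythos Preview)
Your proposal does not go through: the key claim you settle on, that $\mathcal I\,\C[x]_{\mathfrak m_0} = \tilde{\mathcal I}\,\C[x]_{\mathfrak m_0}$, is false in general. Take $n=2$, $\cA_1=\{(1,0),(0,1),(2,0)\}$, $\cA_2=\{(0,3)\}$ (which satisfy (H1) and (H2)), $\alpha=(1,0)$, $\alpha+\beta=(2,0)$, and generic coefficients, so $f_1 = a x_1 + b x_2 + c x_1^2$, $f_2 = d x_2^3$, $\tilde f_1 = a x_1 + b x_2$. Modulo $\tilde f_1$ one has $x_1 \equiv -\tfrac{b}{a}x_2$, so $x_1^2 \equiv \tfrac{b^2}{a^2}x_2^2$, which is nonzero in $\C[x_2]_{(x_2)}/(x_2^3)$; hence $x_1^2\notin\tilde{\mathcal I}\,\C[x]_{\mathfrak m_0}$ and therefore $f_1\notin\tilde{\mathcal I}\,\C[x]_{\mathfrak m_0}$. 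The two local ideals are in fact incomparable, even though both quotients have length $3$. So ``solving for $x^\alpha$ and substituting'' cannot yield equality of ideals; at best it produces a recursive rewriting that pushes the discrepancy to higher and higher degree without ever eliminating it. Your earlier semicontinuity observation is correct but only gives one inequality: since $\tilde\bff$ arises by specializing $c_{1,\alpha+\beta}=0$, upper semicontinuity of multiplicity yields $\mult_0(\tilde\bff)\ge \mult_0(\bff)$, and it is precisely the reverse inequality that needs a new idea. The row-operation approach on $S_k$ that you abandoned as ``circular'' could in principle be pushed through by a careful downward induction on degree, but you have not done so.

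The paper's proof is entirely different and bypasses all of this by going through the already-established mixed volume formula. Using Proposition~\ref{prop:Mgrande}, one adjoins monomials $x_i^M$ (for $M$ large) to every $f_j$ and to every $\tilde f_j$, reducing both multiplicities to the situation of Proposition~\ref{prop:mult0}, where $\mult_0 = MV_n(\cA^{\Delta_M,0})-MV_n(\cA^{\Delta_M})$. The proof then becomes a one-line convex geometry computation: one checks that $\alpha+\beta\in\conv(\{\alpha,Me_1,\dots,Me_n\})$ via the explicit convex combination
\[
\alpha+\beta=\Big(1-\frac{|\beta|}{M-|\alpha|}\Big)\alpha+\sum_{i=1}^n\Big(\frac{\beta_i}{M}+\frac{|\beta|\,\alpha_i}{(M-|\alpha|)M}\Big)Me_i,
\]
so deleting $\alpha+\beta$ from $\cA_1\cup\{Me_i\}_i$ does not change the convex hull, and hence neither mixed volume changes. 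This is where the hypothesis ``$\alpha\in\cA_1$'' is actually used.
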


\begin{proof}
Let $h_1, \dots, h_n$ be polynomials of the form $h_j = f_j
+ \sum_{i=1}^n c_{j, M e_i}\, x_i^{M}$ with $c_{j,M e_i}\in \C$
generic coefficients and $M\in \N$ sufficiently big such that
$$\mbox{mult}_0(f_1, \dots, f_n) = \mbox{mult}_0(h_1, \dots,
h_n),$$
$$ \mbox{mult}_0(f_1- c_{1,\alpha+\beta}x^{\alpha+\beta}, f_2,
\dots, f_n) = \mbox{mult}_0(h_1- c_{1,\alpha+\beta}x^{\alpha+\beta},h_2,
\dots, h_n),$$ $\alpha+\beta\ne M e_i$ for all $1\le i\le n$ and $\cA_1
\subset \mbox{conv}(\{0, M e_1, \dots, M e_n\})$.  The existence of $M$ is ensured by Proposition \ref{prop:Mgrande} and its proof.

To prove that
$ \mbox{mult}_0(h_1, \dots, h_n) = \mbox{mult}_0(h_1- c_{1,\alpha+\beta}x^{\alpha+\beta}, h_2, \dots, h_n)$,
by Proposition \ref{prop:mult0}, it suffices to show that
$\conv(\cA_1\cup \{M {e}_i\}_{i=1}^{n}\backslash
\{\alpha+\beta\}) =  \conv(\cA_1\cup \{M {e}_i\}_{i=1}^{n})$.
This follows from the fact that
$\alpha + \beta \in \conv(\{\alpha, M e_1, \dots, M e_n\}) $, since
$$\alpha + \beta = \Big(1- \frac{|\beta|}{M-|\alpha|}\Big) \alpha + \sum_{i=1}^n \Big(\frac{\beta_i}{M}+
\frac{|\beta|\alpha_i}{(M-|\alpha|)M}\Big) M {e}_i$$ is a convex
linear combination of $\alpha, Me_1, \dots, Me_n$.
\end{proof}

As a consequence of Proposition \ref{prop:sacar monomios} we are able to obtain a refined formula for the multiplicity of the origin for generic polynomials supported on a family $\cA$ satisfying conditions (H1) and (H2), with no need of adding extra points to the supports whenever they intersect the coordinate axes.

\begin{proposition} \label{prop:refined} Let $\cA = (\cA_1,\dots, \cA_n)$ be a family of finite subsets of $(\Z_{\ge 0 })^n$ satisfying assumptions (H1) and (H2), and let $\bff =
(f_1,\dots, f_n)$ be a generic sparse polynomial system supported on $\cA$. Let $M\in \Z$, $M\ge MV_n(\cA^0) - MV_n(\cA)+1$. Then, $0$ is an isolated common zero of $\bff$ with multiplicity
$$\mult_0(\bff) = MV_n(\cA_1^{M, 0},\dots, \cA_n^{M,0})-MV_n(\cA_1^{M},\dots, \cA_n^{M}),$$
where, for every $1\le j \le n$, $\cA_j^{M} := \cA_j \cup \big\{ M e_i : 1\le i \le n, \, \cA_j \cap \{ \mu e_i \mid \mu \in \Z_{\ge 0}\} = \emptyset\big\}$ and $\cA_j^{M,0}:= \cA_j^{M} \cup \{ 0 \}$.
\end{proposition}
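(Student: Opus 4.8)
The plan is to start from Proposition \ref{prop:refined}'s companion result, Proposition \ref{prop:Mgrande}, which already gives $\mult_0(\bff) = MV_n(\cA^{\Delta_M,0}) - MV_n(\cA^{\Delta_M})$ for $M \ge MV_n(\cA^0)-MV_n(\cA)+1$, where $\cA^{\Delta_M}_j = \cA_j \cup \{Me_1,\dots,Me_n\}$. So the whole task reduces to showing that the mixed-volume difference is unchanged if, for each $j$ and each axis direction $e_i$ along which $\cA_j$ already contains a point $\mu e_i$, we delete the added vertex $Me_i$ from $\cA_j^{\Delta_M}$ (and, correspondingly, from $\cA_j^{\Delta_M,0}$). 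That is, I want $MV_n(\cA^{\Delta_M,0})-MV_n(\cA^{\Delta_M}) = MV_n(\cA^{M,0})-MV_n(\cA^M)$ with $\cA^M$ as defined in the statement. Since $M \ge MV_n(\cA^0)-MV_n(\cA)+1 > \mult_0(\bff)$, both $\cA^{\Delta_M}$ and $\cA^M$ satisfy (H1) and (H3) (each $\cA_j^M$ still contains a pure power of every variable: either the original $\mu e_i$ or the added $Me_i$), so Proposition \ref{prop:mult0} applies to a generic system supported on either family, and its multiplicity at the origin is the stated mixed-volume difference.

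First I would reduce to removing one superfluous point at a time. Fix $j_0$ and $i_0$ with $\cA_{j_0}\cap\{\mu e_{i_0}\mid \mu\in\Z_{\ge 0}\}\ne\emptyset$; say $\mu_0 e_{i_0}\in\cA_{j_0}$ with $\mu_0 < M$ (we may assume $\mu_0<M$ after arranging $M$ strictly larger than all $\mu_{ij}$, which is automatic once $M$ exceeds the multiplicity bound and hence the relevant coordinates of the original supports, or else simply absorb the finitely many degenerate cases). I claim $Me_{i_0}\in\conv(\cA_{j_0}^{\Delta_M}\setminus\{Me_{i_0}\})$. Indeed, $\conv(\cA_{j_0}^{\Delta_M}\setminus\{Me_{i_0}\})$ contains $\mu_0 e_{i_0}$ and all the other added vertices $Me_i$, $i\ne i_0$; but to land on $Me_{i_0}$ I need a point on the axis beyond $\mu_0 e_{i_0}$, which these other vertices cannot supply. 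So a direct convex-combination argument does \emph{not} work — $Me_{i_0}$ is genuinely a vertex of $\conv(\cA_{j_0}^{\Delta_M})$, and deleting it \emph{does} change that polytope. This is the main obstacle: unlike Proposition \ref{prop:sacar monomios}, the point being removed is not redundant for the individual Newton polytope, so the proof must show the change in $\mathrm{conv}(\cA_{j_0}^{\Delta_M})$ cancels between the two mixed volumes $MV_n(\cA^{\Delta_M,0})$ and $MV_n(\cA^{\Delta_M})$.

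The resolution I would pursue: use the algebraic identity $\mult_0(\bff)=\mult_0(\bfg)$ directly, bypassing the combinatorial cancellation. Let $\bfg$ be generic supported on $\cA^{\Delta_M}$ and let $\tilde\bfg$ be generic supported on $\cA^M$; I want $\mult_0(\bfg)=\mult_0(\tilde\bfg)$. Now $\tilde g_{j_0}$ differs from $g_{j_0}$ only by the absence of the monomial $c\,x_{i_0}^M$, and in $\cA_{j_0}^{\Delta_M}$ we have both $\mu_0 e_{i_0}$ and $Me_{i_0}$ present — that is, $x_{i_0}^{\mu_0}$ and $x_{i_0}^M = x_{i_0}^{\mu_0}\cdot x_{i_0}^{M-\mu_0}$ both appear, which is exactly the hypothesis ``$\alpha, \alpha+\beta\in\cA_1$'' of Proposition \ref{prop:sacar monomios} with $\alpha=\mu_0 e_{i_0}$, $\beta=(M-\mu_0)e_{i_0}$. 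Hence Proposition \ref{prop:sacar monomios} (applied with the roles of the polynomials permuted) yields $\mult_0(\bfg)=\mult_0(\bfg$ with $x_{i_0}^M$ deleted from $g_{j_0})$, i.e. removing that one added vertex does not change the multiplicity at the origin. Since $\mult_0$ of a generic system equals $\mu_{\cA}$ (Lemma \ref{lem:multgen}) for the relevant support family, and the resulting support family still satisfies (H1), (H3), iterating over all superfluous pairs $(j,i)$ gives $\mult_0(\bfg)=\mult_0(\tilde\bfg)$. Finally, by Proposition \ref{prop:mult0} applied to the family $\cA^M$ (which satisfies (H1) and (H3)), $\mult_0(\tilde\bfg)=MV_n(\cA^{M,0})-MV_n(\cA^M)$, and combining with $\mult_0(\bff)=\mult_0(\bfg)=\mult_0(\tilde\bfg)$ completes the proof. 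The one technical point to verify carefully is that Proposition \ref{prop:sacar monomios} is stated for $f_1$ but is symmetric in the indices (its proof only uses that $0$ is an isolated zero, which is preserved), so it legitimately applies to any $f_j$, and that each intermediate support family keeps (H1) and (H3) so that "generic multiplicity" is well-defined and equal along the chain.
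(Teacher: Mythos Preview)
Your overall strategy is sound: reduce to showing $\mult_0(\bff)=\mult_0(\tilde\bfg)$ for $\tilde\bfg$ generic on $\cA^M$, then apply Proposition~\ref{prop:mult0} to $\cA^M$ (which does satisfy (H1) and (H3)). The step that breaks is the removal of a superfluous vertex $Me_{i_0}$ via Proposition~\ref{prop:sacar monomios} when every axis point $\mu e_{i_0}\in\cA_{j_0}$ has $\mu>M$. Your parenthetical claim that this ``is automatic once $M$ exceeds the multiplicity bound and hence the relevant coordinates of the original supports'' is false. Take $n=2$, $\cA_1=\{(1,0),(0,1)\}$, $\cA_2=\{(100,0),(0,1)\}$; then $\mult_0(\bff)=1$ and $MV_2(\cA^0)-MV_2(\cA)=100-99=1$, so $M=2$ is admissible, yet $\cA_2$ meets the first axis only at $(100,0)$. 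In $\cA_2^{\Delta_2}=\{(100,0),(0,1),(2,0),(0,2)\}$ there is no support point $\alpha\le(2,0)$ with $\alpha\ne(2,0)$, so Proposition~\ref{prop:sacar monomios} cannot delete $(2,0)$; and ``absorbing the finitely many degenerate cases'' is not an argument.

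The fix is to bypass $\cA^{\Delta_M}$ and Proposition~\ref{prop:sacar monomios} entirely and repeat the multiplicity-matrix argument from the proof of Proposition~\ref{prop:Mgrande} directly between $\bff$ and $\tilde\bfg$: the only monomials in $\tilde g_j$ not in $f_j$ are of the form $x_i^M$, all of degree $M>\mult_0(\bff)$, so $S_k(\bff,0)=S_k(\tilde\bfg,0)$ for every $k\le M-1$, whence $\mult_0(\bff)=\mult_0(\tilde\bfg)$. Proposition~\ref{prop:mult0} applied to $\cA^M$ then gives the stated formula. The paper states Proposition~\ref{prop:refined} without proof, attributing it to Proposition~\ref{prop:sacar monomios}; your attempt follows that indicated route faithfully, but the route as written needs the patch above in the case $\mu_0>M$.
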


\begin{example} Consider the generic polynomial system $\bff = (f_1,f_2, f_3) $ with
$$\begin{array}{rcl}
f_1 &=&  c_{11} x_1  +  c_{12} x_2 + c_{13} x_2^2 + c_{14} x_1^2 x_2 x_3 \\
f_2 &=& c_{21}  x_1^2 + c_{22} x_1^3 + c_{23} x_1^2 x_2 + c_{24} x_3^3 \\
f_3 &=& c_{31} x_1  + c_{32}  x_1x_2 + c_{33} x_3^2+ c_{34} x_2 x_3^3  \\
\end{array}$$
with support family $\cA = (\cA_1,\cA_2, \cA_3)$, where
$$\begin{array}{rcl}
\cA_1 & =  &\{ (1,0,0), (0,1,0), (0,2,0), (2,1,1)\} \\
\cA_2 & =  &\{ (2,0,0), (3,0,0), (2,1,0), (0,0,3)\} \\
\cA_3 & =  &\{ (1,0,0), (1,1,0), (0,0,2), (0,1,3)\}
\end{array}$$
which satisfies assumptions (H1) and (H2). Then, $0$ is an isolated common root of $\bff$. In order to compute its multiplicity according to Proposition \ref{prop:refined},  let
$$M := MV_3(\cA^0) - MV_3(\cA)+1 = 28-22 +1=7,$$
and consider the modified support sets
$$\begin{array}{rcl}
\cA_1^{7} & =  &\{ (1,0,0), (0,1,0), (0,2,0), (2,1,1), (0,0,7)\} \\
\cA_2^{7} & =  &\{ (2,0,0), (3,0,0), (2,1,0), (0,0,3), (0,7,0)\} \\
\cA_3^{7} & =  &\{ (1,0,0), (1,1,0), (0,0,2), (0,1,3), (0,7,0)\}
\end{array},$$
which coincide with the supports of the polynomials in Example \ref{ex1}. Therefore,
$$\mult_0(\bff) = MV_3(\cA_1^{7,0},\cA_2^{7,0}, \cA_3^{7,0}) - MV_3(\cA_1^{7},\cA_2^{7}, \cA_3^{7}) = 3.$$
\end{example}

\section{Multiplicity of other roots with zero coordinates} \label{sec:multother}

Let $\cA=(\cA_1, \dots, \cA_n)$ be a family of finite sets in
$(\Z_{\ge 0})^n$ and $ \bff = (f_1, \dots, f_n) \subset \C[x_1, \dots, x_n]$ a generic family of polynomials with support set $\cA$.

For $I \subset \{1, \dots, n\}$, recall that
$$J_I= \{ j\in \{1,\dots, n\} \mid \exists a\in \cA_j : a_i = 0 \ \forall i\in I\}$$
 is the set of indices of the
polynomials in $\bff$ that do not vanish identically under the
specialization $x_i = 0$ for every $i\in I$. Also, for every $j\in J_I$, we denote
$$\cA_j^I = \{ a\in \cA_j \mid a_i = 0 \ \forall i\in I\}.$$

Following \cite[Section 3.2.1]{HJS13}, the system $\bff$ has isolated common zeros lying in
$O_I:=\{x \in \C^n \mid x_i=0 \mbox{ if and only if } i \in I\}$ if and only if
\begin{itemize}
\item[(A1)] $\#I +\#J_I=n$,
\item[(A2)] for every $\widetilde I \subset I$, $\# \widetilde I + \# J_{\widetilde I} \ge n$,
\item[(A3)] for every $J \subset J_I$, $\dim(\sum_{j\in J} \cA^I_j)\ge \# J$.
\end{itemize}

From now on, we will consider a non-empty set $I\subset\{1,\dots, n\}$ satisfying the conditions above and we will study the multiplicity of the isolated common zeros of $\bff$ lying in $O_I$.

\subsection{Multiplicity of affine isolated roots} \label{sec:multaffine}

The aim of this section is to compute multiplicities of the isolated zeros of $\bff$ in $O_I$ in terms of mixed volumes and mixed integrals associated to the system supports. The key result that allows us to do this shows that these multiplicities coincide with the multiplicity of the origin as an isolated root of an associated generic sparse system:

\begin{theorem}\label{teo:multF=multG} Let $\cA=(\cA_1, \dots, \cA_n)$ be a family of finite
sets in $(\Z_{\ge 0})^n$
and $\bff = (f_1, \dots, f_n)$ a generic  sparse system of polynomials in
$\C[x_1, \dots, x_n]$ supported on $\cA$. Assume that $\emptyset \ne I\subset \{1,
\dots, n\}$ satisfies conditions (A1), (A2) and (A3).
Let $\zeta \in \C^n$ be an isolated zero of $\bff$ with $\zeta\in O_I$. Then
$$\mult_{\zeta}(\bff)= \mult_0(\bfg),$$ for a system $\bfg:=(g_j)_{j\notin J_I}$  of generic polynomials
with supports $\cB_j^I:=\pi_I (\cA_j)$ for every $j\notin J_I$, where $\pi_I: \Z^n \to \Z^{\# I}$ is the projection onto the coordinates indexed by $I$.
\end{theorem}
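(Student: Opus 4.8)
The plan is to show that the multiplicity of $\bff$ at $\zeta$ can be computed after a monomial change of coordinates that "decouples" the variables indexed by $I$ from the rest, and then to identify the resulting local computation with that of $\bfg$ at the origin. Concretely, I would first observe that since $\zeta\in O_I$, exactly the coordinates $\zeta_i$ with $i\notin I$ are nonzero; translate these nonzero coordinates so that they become new coordinates centered at $\zeta$, and keep the coordinates $x_i$, $i\in I$, unchanged (they already vanish at $\zeta$). After this translation, for $j\in J_I$ the polynomial $f_j$ has nonzero constant term (by definition of $J_I$, it does not vanish at points of $O_I$ for generic coefficients), so it is a unit in the local ring at $\zeta$; hence the $\#J_I$ equations indexed by $J_I$ can be used to eliminate $\#J_I=n-\#I$ of the variables. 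The natural variables to eliminate are the $x_i$, $i\notin I$: for generic coefficients the Jacobian of $(f_j)_{j\in J_I}$ with respect to $(x_i)_{i\notin I}$ at $\zeta$ is invertible, so by the implicit function theorem (in the formal/analytic local ring) one can solve $x_i=\varphi_i((x_i)_{i\in I})$ for $i\notin I$, and substitution yields that the local ring $\C[x]_{\mathfrak m_\zeta}/\mathcal I$ is isomorphic to the local ring at the origin of the system $\bar g_j:=f_j((x_i)_{i\in I},\varphi((x_i)_{i\in I}))$ for $j\notin J_I$ in the $\#I$ variables $(x_i)_{i\in I}$.

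The second step is to identify the supports and genericity of this eliminated system with those claimed for $\bfg$. Each $\varphi_i$ has no constant term (it vanishes at the point corresponding to $\zeta$ in the $I$-variables, i.e.\ at the origin), so substituting into a monomial $x^a$ of $f_j$ produces a power series in $(x_i)_{i\in I}$ whose lowest-order part is $c\cdot\prod_{i\in I}x_i^{a_i}$ times a unit, i.e.\ its Newton support, as far as the \emph{lowest} exponents reachable are concerned, is governed by $\pi_I(a)$. Thus the system $(\bar g_j)_{j\notin J_I}$ has, up to units and higher-order terms, support exactly $\cB_j^I=\pi_I(\cA_j)$. The point is that multiplicity at the origin only depends on the lowest-order terms in a way captured by the multiplicity matrices $S_k$ of Section \ref{sec:DZ}: one argues, as in the proof of Proposition \ref{prop:Mgrande}, that the higher-order corrections coming from $\varphi$ do not change the kernel dimensions $\dim\ker S_k$ up to the stabilization index, because they only affect entries indexed by monomials of strictly larger degree. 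Hence $\mult_0((\bar g_j)_{j\notin J_I})=\mult_0(\bfg)$ where $\bfg$ is a \emph{generic} system supported on $(\cB_j^I)_{j\notin J_I}$; genericity of $\bff$ transfers to genericity of the relevant coefficients of $\bfg$ because the map from the coefficients of $\bff$ to the low-order coefficients of $\bar g$ is dominant (the diagonal/identity part of the substitution contributes each $\pi_I(a)$-coefficient linearly with a unit factor). One should also check that conditions (A1), (A2), (A3) are exactly what is needed for $(\cB_j^I)_{j\notin J_I}$ to satisfy (H1) and (H2) in $\#I$ variables, so that $\mult_0(\bfg)$ is well-defined and finite: (A1) gives $\#\{j\notin J_I\}=\#I$; (A2) is precisely (H2) for the subsystem; and (A3)/(H1) guarantee $0\notin\cB_j^I$ and that the origin is isolated.

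The main obstacle I expect is making the elimination step fully rigorous at the level of the local ring and, crucially, controlling the effect of the substitution $x_i\mapsto\varphi_i$ on the multiplicity matrices. One must be careful that although $\varphi_i$ is only a formal power series, the ideal generated by $(f_j)_{j\in J_I}$ in $\C[x]_{\mathfrak m_\zeta}$ already contains, after using these units, elements of the form $x_i-(\text{higher order in all variables})$, and so modulo $\mathcal I$ every occurrence of $x_i$, $i\notin I$, can be rewritten; truncating at degree $k$ suffices to compute $\dim\ker S_k$, so only finitely much of each $\varphi_i$ is ever needed. The cleanest way to phrase this is: set $M$ larger than the multiplicity (which is finite by hypothesis and bounded as in \eqref{eq:cotamult} applied to the subsystem), truncate everything modulo the maximal ideal to the power $M$, and show $S_k(\bff,\zeta)$ and $S_k(\bfg,0)$ (suitably padded with the trivially-unit rows coming from $J_I$, which contribute full-rank blocks that cancel the $n-\#I$ extra variables) have equal nullity for all $k\le M$. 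Beyond this bookkeeping the argument is the standard "a system can be solved for part of its variables when part of the Jacobian is invertible, and the quotient ring is unchanged", together with the observation that projecting supports onto the $I$-coordinates is exactly the effect on the Newton data; the genericity transfer and the (A)$\Leftrightarrow$(H) dictionary are routine.
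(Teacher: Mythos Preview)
Your first step contains an error: you assert that for $j\in J_I$ the translated $f_j$ has nonzero constant term and is therefore a unit in the local ring at $\zeta$. This is false---$\zeta$ is a common zero of \emph{all} the $f_j$, so $f_j(\zeta)=0$ for every $j$; the set $J_I$ consists of those $j$ with $(f_j)_I\not\equiv 0$, not those for which $f_j$ is nonvanishing on $O_I$. What you actually need, and invoke immediately afterwards, is that the Jacobian of $(f_j)_{j\in J_I}$ in the variables $(x_i)_{i\notin I}$ is invertible at $\zeta$. That is true but not automatic: it holds because $\xi:=(\zeta_i)_{i\notin I}$ is a \emph{simple} torus root of $\bff_I$, which uses condition (A3). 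More seriously, your second step has a gap. After elimination you obtain power series $\bar g_j=\sum_{b\in\cB_j^I}U_{j,b}(y)\,y^b$ with $U_{j,b}$ units, and you want $\mult_0(\bar g)=\mult_0(\bfg)$ for a generic polynomial system on $\cB^I$. Your appeal to the mechanism of Proposition~\ref{prop:Mgrande} does not apply: there one discards terms of degree at least $M>\mult_0$, whereas the corrections $(U_{j,b}(y)-U_{j,b}(0))y^b$ may have any degree above $|b|$, well below the stabilization index. And the dominance of the coefficient map is not evident, since the leading coefficients $U_{j,b}(0)=\sum_{\pi_I(a)=b}c_{j,a}\prod_{i\notin I}\zeta_i^{a_i}$ involve $\zeta$, which itself varies algebraically with the $c_{j,a}$.

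The paper avoids both difficulties by proving the two inequalities separately, with different tools. For $\mult_\zeta(\bff)\le\mult_0(\bfg)$ it replaces $\bff$ by the triangular system $\bff(I)$ (setting $f_j\mapsto(f_j)_I$ for $j\in J_I$), uses semicontinuity (Lemma~\ref{lem:multfI}) to get $\mult_\zeta(\bff)\le\mult_\zeta(\bff(I))$, and then performs your elimination rigorously at the level of multiplicity matrices (Proposition~\ref{prop:multh}): the block structure of $S_k(\bff(I),\zeta)$ lets one cancel copies of the invertible Jacobian by row operations and reduce to $S_k(\bff(I)_\xi,0)$; genericity of $\bff(I)_\xi$ is then immediate because the coefficients of $f_j$ for $j\notin J_I$ are untouched and $\xi$ depends only on the remaining ones. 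For $\mult_\zeta(\bff)\ge\mult_0(\bfg)$ the paper does \emph{not} eliminate: it makes a generic \emph{linear} change in the non-$I$ variables, invokes semicontinuity once to pass to a generic system $\bfh$ with the resulting supports, applies Proposition~\ref{prop:sacar monomios} repeatedly to strip $\bfh$ down to $(\widetilde g_1,\dots,\widetilde g_r,g_{r+1},\dots,g_n)$ with each $\widetilde g_j=\sum_i\vartheta_{ji}y_i+p_j(y_{r+1},\dots,y_n)$, and finishes with an explicit ring isomorphism. Your single-shot IFT route, if it could be completed, would unify these two halves; but the two missing ingredients above are precisely where the paper's argument does its real work.
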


For this statement to make sense, we  need the following:

\begin{lemma} Under the previous assumptions and notation, let $\cB^I= (\cB_j^I)_{j\notin J_I}$. Then, $0\in \C^{\# I}$ is an isolated zero of a generic polynomial system supported on $\cB^I$.
\end{lemma}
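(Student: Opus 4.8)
The plan is to verify that the family $\cB^I = (\cB_j^I)_{j\notin J_I}$, consisting of $\#I$ sets in $(\Z_{\ge 0})^{\#I}$, satisfies the two hypotheses (H1) and (H2) from Section \ref{sec:mult0} (equivalently, the necessary and sufficient conditions of \cite[Proposition 6]{HJS13}) for the origin to be an isolated zero of a generic system supported on it. Note first that the index set $\{j : j\notin J_I\}$ has cardinality $n - \#J_I = \#I$ by condition (A1), so $\cB^I$ is indeed a family of $\#I$ subsets of $\Z^{\#I}$, and since $\pi_I$ maps $(\Z_{\ge 0})^n$ into $(\Z_{\ge 0})^{\#I}$, each $\cB_j^I \subset (\Z_{\ge 0})^{\#I}$.

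\medskip

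For (H1): I must show $0 \notin \cB_j^I$ for every $j\notin J_I$. Suppose $0 = \pi_I(a)$ for some $a\in\cA_j$; this means $a_i = 0$ for all $i\in I$, which is precisely the condition defining membership in $J_I$. Hence $j\in J_I$, contradicting $j\notin J_I$. So (H1) holds.

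\medskip

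For (H2): relabelling the ground set, I need that for every subset $K \subset \{1,\dots,n\}\setminus J_I$ (playing the role of "$I$" in hypothesis (H2), but now inside the index set of $\cB^I$), writing $\widetilde{K}$ for the corresponding subset of coordinates indexed by $I$, one has $\#\widetilde K + \#\{ j\notin J_I : \exists b\in\cB_j^I \text{ with } b_i = 0 \ \forall i\in\widetilde K\} \ge \#I$. The cleanest route is to translate everything back to the original data: a point $b = \pi_I(a)\in\cB_j^I$ has its $\widetilde K$-coordinates all zero iff $a_i = 0$ for all $i$ in the subset $I_K\subset I$ corresponding to $\widetilde K$. Now consider the subset $\widehat I := (\{1,\dots,n\}\setminus I)\,\cup\, I_K$ of $\{1,\dots,n\}$ — that is, $I$ with the coordinates \emph{outside} $\widetilde K$ replaced by the full complement of $I$. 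Wait: more carefully, I want to pick a subset $L$ of $\{1,\dots,n\}$ such that $J_L$ relates to the quantity above. Take $L = I_K \cup (\{1,\dots,n\}\setminus I)$... actually the coordinates not in $I$ are genuine variables that are not forced to vanish, so this needs thought. The right choice is: among indices $j\notin J_I$, the polynomial $g_j$ (with support $\cB_j^I$) fails to vanish under $x_i = 0$ for $i\in\widetilde K$ iff there is $a\in\cA_j$ with $a_i = 0$ for all $i\in I_K$; and since $j\notin J_I$ there is no $a\in\cA_j$ with $a_i=0$ for \emph{all} $i\in I$. So I should apply condition (A2) — for $\widetilde I = I_K \subsetneq I$ we get $\#I_K + \#J_{I_K}\ge n$ — together with the observation that $J_{I_K} \setminus J_I$ is exactly the set counted above, and $J_I \setminus J_{I_K} = \emptyset$ (if a polynomial survives setting $x_i = 0$ for all $i\in I$, it certainly survives setting them to zero for $i$ in the smaller set $I_K$), so $J_{I_K} = J_I \sqcup (J_{I_K}\setminus J_I)$. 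Combining: $\#I_K + \#J_I + \#(J_{I_K}\setminus J_I) \ge n$, i.e. $\#\widetilde K + \#(J_{I_K}\setminus J_I) \ge n - \#J_I = \#I$, which is exactly (H2) for $\cB^I$. For $K = \emptyset$ one uses condition (A1) directly, or equivalently (A3) with $J = J_I$ ensuring the total dimension is $\#I$. The main obstacle is bookkeeping: getting the index translations between $I$, $\widetilde K$, $I_K$, and the complement $\{1,\dots,n\}\setminus I$ exactly right, and confirming that the relevant sets partition as claimed so that the inequalities (A1)–(A2) convert cleanly into (H1)–(H2) for the smaller family. Once that is done, \cite[Proposition 5 and Proposition 6]{HJS13} give the conclusion immediately.
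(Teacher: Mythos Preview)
Your proposal is correct and follows essentially the same route as the paper: verify (H1) from the definition of $J_I$, and for (H2) use that for any $\widetilde I\subset I$ one has $J_{\widetilde I}(\cA)=J_I\sqcup(J_{\widetilde I}\setminus J_I)$, then combine (A2) with (A1) to get $\#\widetilde I+\#(J_{\widetilde I}\setminus J_I)\ge n-\#J_I=\#I$. The exploratory detours (the aborted choice of $\widehat I$, and the separate treatment of $K=\emptyset$ invoking (A3)) are unnecessary---the main argument already covers that case---but they do no harm.
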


\begin{proof}
It suffices to show that $\cB^I$ satisfies conditions (H1) and (H2) stated at the beginning of Section \ref{sec:mult0} (see  \cite[Proposition 6]{HJS13}).

By the definition of $J_I$, it follows that $0\notin \pi_I(\cA_j) = \cB_j^I$ for every $j\notin J_I$.

In order to simplify notation, we will index the coordinates of $\Z^{\# I}$ by the corresponding elements of $I$.

To prove that condition (H2) holds, we must show that $\# \widetilde I + \# J_{\widetilde I}(\cB^I)\ge \# I$ for every $\widetilde I \subset I$, where
$J_{\widetilde I}(\cB^I)=\{j\notin J_I \mid \exists b\in \cB_j^I : b_i = 0 \ \forall i\in \widetilde I\}.$
Now, for every $\widetilde I \subset I$, we have that
$$J_{\widetilde I}(\cA) = J_I \cup \{j\notin J_I \mid \exists a\in \cA_j : a_i = 0 \ \forall i\in \widetilde I\} = J_I \cup J_{\widetilde I}(\cB^I).$$
Under assumption (A2) on $I$, the inequality $\#\widetilde I  +\# J_{\widetilde I}(\cA) \ge n$ holds; then,
$$\#\widetilde I  +\# J_{\widetilde I}(\cB^I) = \#\widetilde I+ \# J_{\widetilde I} (\cA)- \# J_I \ge n- \# J_I = \# I,$$
where the last identity follows from assumption (A1).
\end{proof}

In order to prove Theorem \ref{teo:multF=multG}, we first introduce some notation and prove some auxiliary results.

For a polynomial $g\in \C[x_1, \dots , x_n]$, $g_I$ will denote the polynomial in $\C[(x_i)_{i \not\in I}]$
obtained from $g$ by specializing $x_i = 0$ for every $i\in I$,
and $\bff_I$ the associated family of polynomials
$$\bff_I=((f_j)_I)_{j \in J_I}.$$ Then, $\bff_I$ is the set of polynomials obtained by
specializing the variables indexed by $I$ to $0$ in the
polynomials in $\bff$ and discarding the ones that vanish identically, and $\cA^I = (\cA_j^I)_{j\in J_I}$ is the family of supports of $\bff_I$.

We will use an auxiliary polynomial system defined as follows:
$$\bff(I) = (f_{1,I},\dots, f_{n,I}), \ \hbox{where} \ f_{j,I} = \begin{cases} (f_j)_I & \hbox{ if } j\in J_I \\ f_j & \hbox{ if } j\notin J_I \end{cases}.$$
Note that the family of supports of these polynomials is
$$\cA(I) = (\cA_{1,I},\dots, \cA_{n,I}), \ \hbox{where} \ \cA_{j,I} = \begin{cases} \cA_j^I & \hbox{ if } j\in J_I \\ \cA_j & \hbox{ if } j\notin J_I \end{cases}.$$

\begin{lemma} \label{lem:multfI}
Under the previous assumptions and notation, if $\zeta \in \C^n$ is an isolated zero of $\bff$ lying in $O_I$, then $\zeta$ is also an isolated zero of $\bff(I)$ and $\mbox{mult}_\zeta(\bff) \le \mbox{mult}_\zeta(\bff (I))$.
\end{lemma}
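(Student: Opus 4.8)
The plan is to realize $\bff(I)$ as the limit of a one-parameter family of generic systems that interpolates between $\bff$ and $\bff(I)$, keeping $\zeta$ as a common zero throughout, and then to apply semicontinuity of the multiplicity at $\zeta$ along this family. Concretely, for each $j\in J_I$ write $f_j = (f_j)_I + \sum_{a\in \cA_j\setminus \cA_j^I} c_{j,a}x^a$, where every monomial $x^a$ with $a\in \cA_j\setminus \cA_j^I$ involves at least one variable $x_i$ with $i\in I$. For $t\in\C$ define $f_j^{(t)}$ by keeping $f_j$ unchanged for $j\notin J_I$ and, for $j\in J_I$, replacing each such ``extra'' monomial $c_{j,a}x^a$ by $c_{j,a}\, t^{?}\, x^a$ — more precisely, multiply each extra monomial by $t$ raised to a positive power chosen so that $f_j^{(1)} = f_j$ and $f_j^{(0)} = f_{j,I} = (f_j)_I$ (taking $t$ to the first power for every extra monomial already works: $f_j^{(t)} = (f_j)_I + t\sum_{a\in\cA_j\setminus\cA_j^I}c_{j,a}x^a$). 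Set $\bff^{(t)} := (f_1^{(t)},\dots,f_n^{(t)})$, so $\bff^{(1)} = \bff$ and $\bff^{(0)} = \bff(I)$.

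First I would check that $\zeta$ is a common zero of $\bff^{(t)}$ for every $t$: indeed $\zeta\in O_I$ means $\zeta_i = 0$ for all $i\in I$, hence every extra monomial $x^a$ with $a\in\cA_j\setminus\cA_j^I$ vanishes at $\zeta$, so $f_j^{(t)}(\zeta) = (f_j)_I(\zeta) = f_j(\zeta) = 0$ for $j\in J_I$, while for $j\notin J_I$ we have $f_j^{(t)} = f_j$ and $f_j(\zeta) = 0$ by hypothesis. Next, $\zeta$ is an \emph{isolated} zero of $\bff^{(0)} = \bff(I)$: the scheme $V(\bff(I))$ contains $V(\bff)$ set-theoretically near $\zeta$ up to the extra monomials — more carefully, one argues that since $\zeta$ is isolated in $V(\bff)$ and, near $\zeta$, the ideal generated by $\bff(I)$ differs from that generated by $\bff$ only by the terms $t\sum c_{j,a}x^a$ which all lie in the ideal $(x_i : i\in I)$, the fiber dimension cannot jump; alternatively, this will follow from the semicontinuity argument itself once we know $\mult_\zeta(\bff^{(0)})$ is finite, which is exactly what we are proving, so I would instead verify isolatedness directly using conditions (A1)–(A3), which guarantee (via \cite[Proposition 6]{HJS13}, applied to the appropriate subsystem) that $\bff(I)$ has $0$-dimensional local ring at $\zeta$.

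Then I would invoke the standard semicontinuity of the intersection multiplicity in flat families: the function $t\mapsto \dim_\C \C[x]_{\mathfrak m_\zeta}/(f_1^{(t)},\dots,f_n^{(t)})\C[x]_{\mathfrak m_\zeta}$ is upper semicontinuous on the parameter line (equivalently, use the multiplicity matrices $S_k(\bff^{(t)},\zeta)$ of Section \ref{sec:DZ}: their entries are polynomials in $t$, so $\dim\ker S_k(\bff^{(t)},\zeta)$ can only drop on a Zariski-dense open set of $t$, and since $k_0$ stabilizes, $\mult_\zeta(\bff^{(t)})$ is constant for generic $t$ and can only be larger for special $t$). Hence $\mult_\zeta(\bff) = \mult_\zeta(\bff^{(1)}) \le \mult_\zeta(\bff^{(0)}) = \mult_\zeta(\bff(I))$, since $t=1$ is a generic value (the coefficients $c_{j,a}$ are themselves generic, so $\bff = \bff^{(1)}$ lies in the open stratum where the multiplicity attains its minimum) while $t=0$ is a special value. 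This gives the desired inequality.

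The main obstacle I anticipate is justifying that $\zeta$ remains an \emph{isolated} zero of $\bff(I)$ — without this, ``$\mult_\zeta(\bff(I))$'' is not even defined and the semicontinuity statement is vacuous. The natural route is to peel off the variables indexed by $I$: the polynomials $f_{j,I}$ for $j\in J_I$ do not involve those variables, so $\bff(I)$ is block-triangular, and one shows that (A3) forces the subsystem $\bff_I = ((f_j)_I)_{j\in J_I}$ in the variables $(x_i)_{i\notin I}$ to have $\pi_{I^c}(\zeta)$ as an isolated zero (a Bernstein-type dimension count, using $\dim(\sum_{j\in J}\cA_j^I)\ge \#J$), while for the remaining variables $(x_i)_{i\in I}$ the polynomials $f_j$, $j\notin J_I$, cut out $0$ as isolated by (A1)–(A2) together with the lemma preceding this one; combining the two blocks yields that $\zeta$ is isolated in $V(\bff(I))$. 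I would spell this reduction out as the technical heart of the argument, citing \cite[Section 3.2.1]{HJS13} and the preceding lemma for the two pieces.
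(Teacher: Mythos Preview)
Your approach is correct and at its core matches the paper's: both arguments reduce the multiplicity inequality to upper semicontinuity of the local multiplicity in the coefficients (the paper simply cites Lemma~\ref{lem:multgen}; you spell this out via the one-parameter family $\bff^{(t)}$ and the multiplicity matrices $S_k(\bff^{(t)},\zeta)$, which is exactly how Lemma~\ref{lem:multgen} is proved). The main difference is in the isolatedness step. The paper dispatches it in one line by observing that $\bff(I)$ is itself a \emph{generic} system supported on $\cA(I)$ (its coefficients being a subset of the generic coefficients of $\bff$) and that $J_{\widetilde I}(\cA(I)) = J_{\widetilde I}(\cA)$ for every $\widetilde I\subset I$, so conditions (A1)--(A3) transfer verbatim from $\cA$ to $\cA(I)$ and the criterion from \cite[Section~3.2.1]{HJS13} applies directly to $\bff(I)$. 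Your block-triangular decomposition --- handling $\bff_I$ via (A3) and the polynomials $(f_j)_{j\notin J_I}$ via (A1)--(A2) together with the preceding lemma --- is also valid but longer; it essentially anticipates the structure exploited in Proposition~\ref{prop:multh}, which the paper uses anyway in the proof of Theorem~\ref{teo:multF=multG}. So your route is a correct alternative, just slightly more elaborate than necessary once one notices that the combinatorial hypotheses pass unchanged to $\cA(I)$.
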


\begin{proof}
The fact that $\zeta$  is an isolated zero of $\bff(I)$ follows from the facts that $\bff(I)$ is a generic system supported on $\cA(I)$ vanishing at $\zeta$, and that, for every $\widetilde I \subset I$, $J_{\widetilde I } (\cA(I)) = J_{\widetilde I}(\cA)$. The inequality between the multiplicities is a consequence of Lemma \ref{lem:multgen}.
\end{proof}

We now focus on a special case of polynomial systems with the same structure as $\bff(I)$, namely,  systems of $n$ polynomials in $n$ variables which contain $r$ polynomials depending only on $r$ variables.

\begin{proposition} \label{prop:multh}
Let $\bfh = (h_1,\dots, h_n)$ be a system of polynomials in $\C[x_1,\dots, x_n]$ such that $h_1,\dots, h_r \in \C[x_1,\dots, x_r]$.
Let  $\xi \in \C^r$ be an isolated nondegenerate common zero of $h_1,\dots h_r$ such that $0\in \C^{n-r}$ is an isolated zero of $\bfh_\xi:= (h_{r+1}(\xi, x_{r+1},\dots, x_n), \dots,$ $ h_{n}(\xi, x_{r+1},\dots, x_n))$. Then, $\zeta= (\xi, 0)\in \C^n$ is an isolated zero of $\bfh$ satisfying:
$$\mult_\zeta (\bfh) = \mult_0(\bfh_\xi).$$
\end{proposition}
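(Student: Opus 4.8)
\textbf{Plan for the proof of Proposition \ref{prop:multh}.}
The plan is to identify the local ring of $\bfh$ at $\zeta=(\xi,0)$ with the local ring of $\bfh_\xi$ at $0$, using that $h_1,\dots,h_r$ cut out $\xi$ transversally in the first $r$ coordinates. First I would note that $\zeta$ is indeed an isolated zero of $\bfh$: since $h_1,\dots,h_r$ depend only on $x_1,\dots,x_r$ and $\xi$ is isolated among their common zeros, any component of $V(\bfh)$ through $\zeta$ projects into the isolated point $\xi$ of $V(h_1,\dots,h_r)\subset\C^r$, hence is contained in $\{\xi\}\times\C^{n-r}$; on that fiber $\bfh$ restricts to $\bfh_\xi$, which has $0$ as an isolated zero.

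The heart of the argument is the ring isomorphism. Let $\mathcal{I}=(h_1,\dots,h_n)\subset\C[x_1,\dots,x_n]$ and work in the local ring $\mathcal{O}:=\C[x]_{\mathfrak{m}_\zeta}$. Because $\xi$ is a \emph{nondegenerate} common zero of $h_1,\dots,h_r\in\C[x_1,\dots,x_r]$, the Jacobian $\partial(h_1,\dots,h_r)/\partial(x_1,\dots,x_r)$ is invertible at $\xi$, so by the (algebraic/formal) inverse function theorem $h_1,\dots,h_r$ form a regular system of parameters for the first $r$ variables at $\xi$; concretely, in $\C[x_1,\dots,x_r]_{\mathfrak{m}_\xi}$ the ideal $(h_1,\dots,h_r)$ equals $(x_1-\xi_1,\dots,x_r-\xi_r)$. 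Tensoring up, in $\mathcal{O}$ we get $(h_1,\dots,h_r)=(x_1-\xi_1,\dots,x_r-\xi_r)$, and therefore
$$\mathcal{O}/\mathcal{I}\mathcal{O}\;\cong\;\big(\mathcal{O}/(x_1-\xi_1,\dots,x_r-\xi_r)\big)\big/\big(\overline{h_{r+1}},\dots,\overline{h_n}\big),$$
where the bar denotes the image after setting $x_i=\xi_i$ for $i\le r$, i.e. $\overline{h_j}=h_j(\xi,x_{r+1},\dots,x_n)$. But $\mathcal{O}/(x_1-\xi_1,\dots,x_r-\xi_r)$ is canonically $\C[x_{r+1},\dots,x_n]_{\mathfrak{m}_0}$, and under this identification $\overline{h_j}$ is exactly the $j$-th component of $\bfh_\xi$. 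Hence $\mathcal{O}/\mathcal{I}\mathcal{O}\cong \C[x_{r+1},\dots,x_n]_{\mathfrak{m}_0}/(\bfh_\xi)$, and taking $\C$-dimensions gives $\mult_\zeta(\bfh)=\mult_0(\bfh_\xi)$.

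The main obstacle to make this fully rigorous is justifying the equality of ideals $(h_1,\dots,h_r)=(x_1-\xi_1,\dots,x_r-\xi_r)$ in the local ring $\C[x_1,\dots,x_r]_{\mathfrak{m}_\xi}$ purely algebraically (without invoking analytic inverse functions). This can be done by Nakayama's lemma: both ideals have the same image in $\mathfrak{m}_\xi/\mathfrak{m}_\xi^2$ because the Jacobian matrix $(\partial h_i/\partial x_\ell(\xi))$ is invertible, so $(h_1,\dots,h_r)+\mathfrak{m}_\xi^2=(x_1-\xi_1,\dots,x_r-\xi_r)+\mathfrak{m}_\xi^2=\mathfrak{m}_\xi$; since $\C[x_1,\dots,x_r]_{\mathfrak{m}_\xi}$ is Noetherian local and $\mathfrak{m}_\xi$ is finitely generated with $(h_1,\dots,h_r)+\mathfrak{m}_\xi\mathfrak{m}_\xi = \mathfrak{m}_\xi$, Nakayama gives $(h_1,\dots,h_r)=\mathfrak{m}_\xi$, hence both sides equal $\mathfrak{m}_\xi$ and a fortiori each other. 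A secondary subtlety is the flat base change $\C[x_1,\dots,x_r]_{\mathfrak{m}_\xi}\to\mathcal{O}$: one should check that $\mathcal{O}$ is obtained from $\C[x_1,\dots,x_r]_{\mathfrak{m}_\xi}$ by adjoining $x_{r+1},\dots,x_n$ and localizing, so extension of the ideal commutes with the quotient as claimed; this is routine once the variables are split as $(x_1,\dots,x_r)$ and $(x_{r+1},\dots,x_n)$.
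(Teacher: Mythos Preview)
Your argument is correct and takes a genuinely different route from the paper. The paper proves the equality of multiplicities by comparing the Dayton--Zeng multiplicity matrices $S_k(\bfh,\zeta)$ and $S_k(\bfh_\xi,0)$ from Section~\ref{sec:DZ}: it analyzes the block structure of $S_k(\bfh,\zeta)$, uses the invertibility of the Jacobian $\big(\partial h_j/\partial x_i(\xi)\big)_{1\le i,j\le r}$ to perform row reductions that eliminate all columns indexed by $\alpha$ with $(\alpha_1,\dots,\alpha_r)\ne 0$, and identifies the surviving submatrix with $S_k(\bfh_\xi,0)$, concluding that the kernels have equal dimension for every $k$. Your approach instead works directly with the local rings: the nondegeneracy of $\xi$ plus Nakayama give $(h_1,\dots,h_r)=\mathfrak{m}_\xi$ in $\C[x_1,\dots,x_r]_{\mathfrak{m}_\xi}$, and then a straightforward quotient-and-identify step yields the isomorphism of local algebras. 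Your route is shorter and conceptually cleaner for this statement in isolation; the paper's route stays within the linear-algebraic framework it has already set up and uses throughout (e.g.\ in Lemma~\ref{lem:multgen} and Proposition~\ref{prop:Mgrande}), which keeps the tools self-contained. Both arguments use the Jacobian invertibility at exactly the same point, just expressed in different languages.
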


\begin{proof}
Under our assumptions, it follows that $\zeta = (\xi , 0)$ is an isolated zero of the system $\bfh$: if there is an irreducible curve $C$ passing through $\zeta$, since $\xi$ is an isolated common zero of $h_1,\dots, h_r\in \C[x_1,\dots, x_r]$, we have that $C \subset \{ x_1 = \xi_1, \dots, x_r = \xi _r\}$ and so, $(\xi, 0) \in C \subset \{ x_1 = \xi_1, \dots, x_r = \xi _r, h_{r+1}(x) = 0\dots, h_n(x) = 0\} = \{ \xi\} \times V(\bfh_\xi)$, contradicting the fact that $0$ is an isolated zero of $\bfh_\xi$.

In order to prove the stated equality of multiplicities, we will compare the multiplicity matrices $S_k(\bfh, \zeta)$ and $S_k(\bfh_\xi, 0)$ for $k\in \N$ (see Section \ref{sec:DZ} for the definition of multiplicity matrices). To this end, we will analyze the structure of $S_k(\bfh, \zeta)$.

Recall that for the system $\bfh$, for $k \ge 1$, the columns of $S_k(\bfh, \zeta)$ are indexed by $\alpha$ for
$|\alpha|\le k$ and its rows are indexed by $(\beta,j)$ for
$|\beta|\le k-1$ and $1\le j \le n$; the entry corresponding to row $(\beta, j)$ and column $\alpha$ is $$(S_k(\bfh, \zeta))_{(\beta, j), \alpha} =\partial_\alpha ((x-\zeta)^{\beta} h_j)(\zeta),$$
where $\partial_\alpha$ is defined in (\ref{eq:deralpha}).

Note that, for  $\gamma=(\gamma_1, \dots, \gamma_n) \in (\Z_{\ge 0})^n$,  we have
\begin{equation} \label{eq:derivada}
\frac{1}{\alpha!}\frac{\partial^{|\alpha|}}{\partial
x^\alpha}((x-\zeta)^{\beta} x^\gamma)(\zeta)=
\begin{cases}
\prod\limits_{i=1}^r \binom{\gamma_i}{\alpha_i-\beta_i}
\xi_i^{\gamma_i+\beta_i-\alpha_i} & \mbox{ if } \beta_i \le \alpha_i \le \beta_i +\gamma_i \ \forall \, 1\le i \le r \\[-2mm]
& \mbox{ and } \alpha_i = \beta_i +\gamma_i \ \forall\,  r+1\le i \le n, \\[2mm]
0 & \mbox{ otherwise.}
\end{cases}\end{equation}

Then, an entry  of $S_k(\bfh, \zeta)$ corresponding to a row indexed by $(\beta, j)$ and a column indexed by $\alpha$ is $0$ whenever $|\beta| \ge |\alpha|$.

We will first consider the columns of  $S_k(\bfh, \zeta)$ indexed by vectors of the form $\alpha= (0,\dots, 0, \alpha_{r+1},\dots, \alpha_n)\ne 0$. For $1\le j \le r$, since the polynomial $h_j$ does not depend on the variables $x_{r+1},\dots, x_n$, we have that $(S_k(\bfh, \zeta))_{(\beta, j), \alpha} =0$ for every $\beta$. For $r+1\le j \le n$ and $\beta$ with $\beta_i \ne 0$ for some $1\le i \le r$, we also have $(S_k(\bfh, \zeta))_{(\beta, j), \alpha} =0$ since $\beta_i >\alpha_i = 0$ (see equation (\ref{eq:derivada})). Finally, for $r+1\le j \le n$ and $\beta = (0,\dots, 0, \beta_{r+1},\dots,\beta_n)$,
\begin{equation}\label{eq:matrixentries}
\begin{array}{rcl}
(S_k(\bfh, \zeta))_{(\beta, j), \alpha} &=& \displaystyle\dfrac{1}{\alpha!}\frac{\partial^{|\alpha|}}{\partial x_{r+1}^{\alpha_{r+1}} \dots \partial x_n^{\alpha_n}}
x_{r+1}^{\beta_{r+1}}\dots x_n^{\beta_n} h_j(\xi, x_{r+1},\dots, x_n) (0) \\[5mm] &=& (S_k(\bfh_\xi, 0))_{((\beta_{r+1},\dots, \beta_n), j), (\alpha_{r+1},\dots, \alpha_n)}.
\end{array}
\end{equation}

We analyze now the remaining columns of the matrix.

Consider the submatrix of $S_k(\bfh, \zeta)$ given by the columns indexed by $\alpha$ such that $(\alpha_1,\dots, \alpha_r ) \ne 0$  and
$|\alpha|=k$.
From identity (\ref{eq:derivada}), we can observe that in every row indexed by $(\beta, j)$ for
$|\beta|=k-1$ and $1\le j \le n$, the only columns with (possibly) non-zero
coordinates are indexed by $\alpha=\beta +
e_i$ where $\{e_i\}_{i=1}^n$ is the canonical basis of $\R^n$;
moreover,
$$(S_k(\bfh, \zeta))_{(\beta, j), \beta + e_i} =
\frac{\partial h_j}{\partial x_i}(\zeta).$$
Note that, for $1 \le j \le r$ and $r+1\le i \le n$, we have $\frac{\partial h_j}{\partial
x_i}\equiv 0 $.
Then, for every $\beta$ with $|\beta|=k-1$, in the rows indexed by $(\beta, j)$
for $ 1 \le j \le r$ we have a copy  of the Jacobian matrix $\mathcal{J} := \left(\frac{\partial h_j}{\partial
x_i}(\xi)\right)_{1\le j,i \le r}$ in the columns indexed by $\beta +e_1,\dots, \beta+e_r$, and
all other entries of the matrix $S_k(\bfh, \zeta)$ in these rows are zero. We remark that $\mathcal{J}$ is an invertible matrix since $\xi$ is a nonsingular common zero of $h_1,\dots h_r$. Note that, for every
$\alpha$ with $|\alpha|=k$ and $\alpha_i\ge 1$ for some $1 \le i \le r$,
there is at least one  $\beta=\alpha-e_i$ with $|\beta|=k-1$; so, all the columns indexed by $\alpha$ with $|\alpha|=k$ and  $(\alpha_1,\dots, \alpha_r ) \ne 0$ are involved in at least one of the copies of $\mathcal{J}$.

Therefore, by performing row operations in $S_k(\bfh, \zeta)$ we can obtain a matrix such that each column indexed by a vector $\alpha$ with $|\alpha|=k$ and $(\alpha_1,\dots, \alpha_r ) \ne 0$ contains all zero entries except for a unique coordinate equal to $1$ in a row indexed by $(\beta, j)$ for some $\beta$ with $|\beta |=k-1$ and $1\le j \le r$, and all these $1$'s lie in different rows. Moreover, these row operations do not modify the remaining columns of $S_k(\bfh, \zeta)$.

Then, the dimension of the kernel of $S_k(\bfh,\zeta)$ is the same as the dimension of the kernel of the matrix obtained by removing the columns indexed by $\alpha$ with $(\alpha_1,\dots, \alpha_r) \ne 0$  and $|\alpha|=k$.
We repeat this procedure for $s=k, k-1, \dots, 1$ (in this order) and we conclude that
the dimension of the kernel of $S_k(\bfh,\zeta)$ is the same as the dimension of the kernel of the submatrix obtained by removing all columns indexed by $\alpha$ with $(\alpha_1,\dots, \alpha_r) \ne 0$. This submatrix consists of the first column of  $S_k(\bfh,\zeta)$, which is identically zero, and all columns indexed by $\alpha= (0,\dots, 0, \alpha_{r+1},\dots,\alpha_n)\ne 0$.
Due to our previous considerations on the matrix formed by these columns, we have that the only rows that are not zero are those indexed by $(\beta, j)$ with $r+1\le j \le n$ and $\beta= (0,\dots, 0,\beta_{r+1}, \dots, \beta_n)$ and these are exactly the rows of $S_k(\bfh_\xi, 0)$ (see identity (\ref{eq:matrixentries})). Therefore,
$$\dim(\ker(S_k(\bfh, \zeta)))=\dim(\ker(S_k(\bfh_\xi, 0))) \mbox{ for
every } k \ge 1.$$ The result follows.
\end{proof}

Now we can prove Theorem \ref{teo:multF=multG}.

\begin{proof}
Without loss of generality, we may assume that $I=\{r+1, \dots, n\}$ for some
$r\in \{1, \dots, n\}$ and $J_I=\{1, \dots, r\}$.

We will first prove that  $\mult_{\zeta}(\bff)\ge \mult_0(\bfg)$.

We make the change of variables
$$ \begin{matrix}
x_1 := \sum_{i=1}^r c_{1i}y_i + \zeta_1 & \quad & x_{r+1} := y_{r+1} \\
\vdots & & \vdots \\
x_r := \sum_{i=1}^r c_{ri}y_i + \zeta_r & \quad &     x_n := y_n
\end{matrix}$$
where $(c_{ki})_{1 \le k,i \le r}\subset \Q$ are
generic constants and obtain the polynomial system
$\widetilde{\bff}=(\widetilde{f}_1,\dots, \widetilde{f}_n)$ in $\C[y_1,\dots, y_n]$ from the system $\bff$. Note that $\mbox{mult}_0(\widetilde{\bff}) = \mbox{mult}_{\zeta}(\bff)$.

For every $1\le j \le n$, let $\widetilde \cA_j$ be the support of $\widetilde f_j$.

For $1\le j \le r$, since
$f_j(x_1, \dots, x_r, 0, \dots, 0)\ne 0$ and has a non-constant term (since it vanishes at $(\zeta_1,\dots, \zeta_r)\in (\C^*)^r$),
due to the genericity of the coefficients and the change of variables, we have that the monomials $y_1, \dots, y_r$ appear with non-zero coefficients in $\widetilde{f}_j(y)$.
On the other hand, again, for the genericity of coefficients and change of variables, for $r+1 \le j \le n$, \begin{equation}\label{eq:supp1}
\pi_I(\widetilde{\cA}_j) = \pi_I( \cA_j);
\end{equation}
moreover, taking into account that $$\widetilde f_j(0, \dots, 0, y_{r+1},\dots, y_n) = f_j(\zeta_1, \dots, \zeta_r, x_{r+1},\dots, x_n),$$ we conclude that
\begin{equation}\label{eq:supp2}
\{ \beta \in (\Z_{\ge 0 })^{n-r}\mid (\mathbf{0}, \beta) \in \widetilde{\cA}_j) \} = \pi_I(\cA_j).
\end{equation}
Let $\mathbf{h}= (h_1, \dots, h_n)$ be a generic polynomial system with supports $\widetilde \cA = (\widetilde{\cA}_1,\dots,\widetilde{\cA}_n)$. Note that condition (H1) holds for $\widetilde \cA$. Let us see that $\widetilde \cA$ also satisfies condition (H2), which implies that $0$ is an isolated zero of $\bfh$. For $\widetilde I \subset \{1,\dots, n\}$, if $\#\widetilde{I} +  \# J_{\widetilde I}(\widetilde \cA) <n$, when setting $y_i = 0$ in $\widetilde \bff$ for every $i\in \widetilde I$, we obtain a system in $n-\#\widetilde I$ unknowns with $\# J_{\widetilde I}(\widetilde \cA)<n-\#\widetilde I$ equations. This system vanishes at $0$ and defines a positive dimensional variety, contradicting the fact that $0\in \C^n$ is an isolated common zero of $\widetilde \bff$.
By Lemma \ref{lem:multgen}, the inequality
$\mbox{mult}_0 (\widetilde{\bff}) \ge \mbox{mult}_0(\mathbf{h})$ holds.

Applying Proposition \ref{prop:sacar monomios} to the polynomials in the system $\mathbf{h}$,
since the monomials $y_1,\dots, y_r$ appear with non-zero coefficients in $h_1,\dots, h_r$ and, for $r+1\le j \le n$, the supports $\mbox{supp}(h_j) = \mbox{supp}(\widetilde f_j)$ satisfy conditions (\ref{eq:supp1}) and (\ref{eq:supp2}), it follows that
$\mbox{mult}_0(\mathbf{h}) =
\mbox{mult}_0(\widetilde {\mathbf{g}},\mathbf{g})$, where $\widetilde {\mathbf{g}} = (\widetilde g_{1},\dots, \widetilde g_r)$  with $\widetilde g_j = \sum_{i=1}^r\vartheta_{ji} y_i + p_j(y_{r+1}, \dots,
y_n)$  for  $1 \le j \le r$, and $\mathbf{g} = (g_{r+1},\dots, g_n)$ with
  $g_j\in \C[y_{r+1},\dots, y_n]$ a generic polynomial with support $\pi_I(\cA_j)$  for $r+1 \le j \le n$.

 Then, if $A$ is the inverse of the matrix $(\vartheta_{ji})$ and $$A .\, (\widetilde g_1, \dots, \widetilde  g_r)^t = (y_1 + q_1(y_{r+1},\dots, y_n), \dots, y_r + q_r(y_{r+1},\dots, y_n))^t,$$ the following is an isomorphism:
$$\begin{array}{ccl}
\Q[y_1, \dots, y_n]/{(\widetilde {\mathbf{g}},\mathbf{g})} &\rightarrow& \Q[y_{r+1},
\dots, y_n]/{( \mathbf{g})}\\[2mm]
\overline{y_i} & \mapsto & \overline{-q_i} \quad \mbox{ for all
}1 \le i \le r \\ \overline{y_i} &  \mapsto & \overline{y_i}
\quad \mbox{ for all }r+1 \le i \le n \end{array}$$
and hence
$\mbox{mult}_0(\widetilde {\mathbf{g}},\mathbf{g})= \mbox{mult}_0(\mathbf{g})
.$

Therefore,
$$ \mbox{mult}_{\zeta}(\bff)= \mbox{mult}_0(\widetilde{\bff}) \ge \mbox{mult}_0(\mathbf{h}) = \mbox{mult}_0(\widetilde {\mathbf{g}},\mathbf{g})= \mbox{mult}_0(\mathbf{g}).$$

To prove the other inequality, note that, by Lemma \ref{lem:multfI}, we have that
 $$\mult_\zeta(\bff) \le \mult_\zeta(\bff(I)).$$
Then, applying Proposition \ref{prop:multh} to
the system $\bff(I)$ and $\xi = (\zeta_1,\dots, \zeta_r)$, we deduce that
$$\mult_\zeta(\bff(I)) = \mult_0(\bff(I)_\xi).$$
By the genericity of the coefficients of $\bff$ and the triangular structure of $\bff(I)$, the system  $\bff(I)_\xi$ turns to be a generic system supported on $\cB_{r+1}^I, \dots, \cB_n^I$.

We conclude that
$\mult_\zeta(\bff) \le \mbox{mult}_0(\mathbf{g}).$
\end{proof}

Taking into account that the results in Section \ref{sec:mult0} enable us to express the multiplicity of the origin as an isolated zero of a generic sparse system in terms of mixed volumes and mixed integrals, we can now state a similar result regarding the multiplicity of any affine isolated zero of a generic sparse system of $n$ equations in $n$ unknowns.

\begin{theorem} \label{teo:MultXiTODO} Let $\cA = (\cA_1, \dots, \cA_n)$ be a family of finite sets in
$(\Z_{\ge 0})^n$ and $\bff = (f_1, \dots, f_n)$ be a generic
sparse system of polynomials in $\C[x_1, \dots, x_n]$ supported on $\cA$.
Let $I \subset \{1, \dots, n\}$ satisfying conditions (A1), (A2) and (A3).
For $j\notin J_I$, let $\cB_j^I= \pi_I(\cA_j)$, where  $\pi_I: \Z^n \to \Z^{\# I}$ is the projection to the
coordinates indexed by $I$.  Let $M_I:=MV_{\#I}\big((\cB_j^I \cup \{ 0 \})_{j \not\in J_I}\big)-
MV_{\#I}\big((\cB_j^I)_{j \not\in J_I}\big)+1$.

Then, for every isolated zero $\zeta \in \C^n$ of $\bff$ such that $\zeta_i = 0$ if and only if $i\in I$, we have
$$\mult_{\zeta}(\bff)= MV_{\#I}((\cB_j^I \cup \{ 0, M_I e_i\}_{i=1}^{\#I})_{j\notin J_I}) -
MV_{\#I}((\cB_j^I \cup \{M_I e_i\}_{i=1}^{\#I})_{j\notin J_I}).
$$
Moreover, if $(\rho_j)_{j \not\in J_I}$ are the convex functions that parameterize the lower
envelopes of the polytopes
$\conv(\cB_j^I \cup \{M_I e_i\}_{i=1}^{\#I})$ and $(\overline{\rho}_j)_{j\notin J_I}$ are their restrictions as defined in (\ref{eq:restricciones}), then $\mult_{\zeta}(\bff) = MI'_{\# I}((\overline{\rho}_j)_{j\notin J_I})$.

\end{theorem}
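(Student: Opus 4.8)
The plan is to assemble Theorem \ref{teo:MultXiTODO} directly from the structural reduction in Theorem \ref{teo:multF=multG} together with the formulas for the multiplicity of the origin proved in Section \ref{sec:mult0}. By Theorem \ref{teo:multF=multG}, for an isolated zero $\zeta$ of $\bff$ with vanishing coordinates indexed by $I$, we have $\mult_\zeta(\bff) = \mult_0(\bfg)$, where $\bfg = (g_j)_{j\notin J_I}$ is a generic sparse system of $\#I$ polynomials in $\#I$ variables supported on the family $\cB^I = (\cB_j^I)_{j\notin J_I}$ with $\cB_j^I = \pi_I(\cA_j)$. The lemma immediately preceding Theorem \ref{teo:multF=multG} guarantees that $\cB^I$ satisfies hypotheses (H1) and (H2), so the results of Section \ref{subsec:generalcase} apply to $\bfg$ verbatim.

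The first step is then to invoke Proposition \ref{prop:refined} (the refined version of Proposition \ref{prop:Mgrande}) for the system $\bfg$ supported on $\cB^I$ in $\#I$ variables. Since $M_I = MV_{\#I}((\cB_j^I\cup\{0\})_{j\notin J_I}) - MV_{\#I}((\cB_j^I)_{j\notin J_I}) + 1 \ge \mult_0(\bfg) + 1$ by inequality (\ref{eq:cotamult}) applied to $\cB^I$, Proposition \ref{prop:refined} yields
$$\mult_0(\bfg) = MV_{\#I}\big((\cB_j^I)_{j\notin J_I}^{M_I,0}\big) - MV_{\#I}\big((\cB_j^I)_{j\notin J_I}^{M_I}\big).$$
One only has to observe that, because no $\cB_j^I$ contains $0$ (condition (H1)), the ``completed'' support $(\cB_j^I)^{M_I}$ in the notation of Proposition \ref{prop:refined} is obtained by adjoining $M_I e_i$ precisely for those $i$ with $\cB_j^I \cap \{\mu e_i\} = \emptyset$; but adjoining $M_I e_i$ when $\mu e_i \in \cB_j^I$ for some $\mu$ already does not change the convex hull once $M_I$ exceeds all relevant exponents, which holds here since $M_I$ is at least the affine multiplicity bound (cf. the convexity argument in the proof of Proposition \ref{prop:sacar monomios}). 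Hence $MV_{\#I}((\cB_j^I)^{M_I}_{j\notin J_I}) = MV_{\#I}((\cB_j^I\cup\{M_Ie_i\}_{i=1}^{\#I})_{j\notin J_I})$ and likewise with $\{0\}$ adjoined, giving the first displayed formula of the theorem.

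For the mixed-integral formula, the second step is to apply Corollary \ref{cor:mult0MI} — or equivalently Theorem \ref{teo:MIcasoTocandoEjes} applied to the family $(\cB_j^I\cup\{M_Ie_i\}_{i=1}^{\#I})_{j\notin J_I}$, which satisfies (H1) and (H3) by construction — to conclude that $\mult_0(\bfg) = MI'_{\#I}((\overline{\rho}_j)_{j\notin J_I})$, where $\rho_j$ parameterizes the lower envelope of $\conv(\cB_j^I\cup\{M_Ie_i\}_{i=1}^{\#I})$ and $\overline{\rho}_j$ is its restriction to $\pi(\Delta_j)$ as in (\ref{eq:restricciones}). Chaining $\mult_\zeta(\bff) = \mult_0(\bfg)$ with these two expressions finishes the proof.

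The only genuine subtlety — and hence the step I would write out most carefully — is matching the bookkeeping between the abstract completed-support notation of Proposition \ref{prop:refined} and the explicit set $\cB_j^I\cup\{0, M_I e_i\}_{i=1}^{\#I}$ appearing in the statement, i.e.\ checking that throwing in the extra axis points $M_Ie_i$ that are ``already covered'' by an existing pure power leaves every mixed volume unchanged. Everything else is a direct citation of Theorem \ref{teo:multF=multG}, Proposition \ref{prop:refined} and Corollary \ref{cor:mult0MI}, together with the preceding lemma certifying that $\cB^I$ meets hypotheses (H1)–(H2); no new estimates are required.
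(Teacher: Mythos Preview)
Your overall plan is exactly the paper's: combine Theorem \ref{teo:multF=multG} with the formulas of Section \ref{sec:mult0}, using the preceding lemma to check that $\cB^I$ satisfies (H1)--(H2). The paper does not even write this out; it simply states the theorem and remarks afterward that the formula can be refined via Proposition \ref{prop:refined}.

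However, there is a genuine error in the execution. The displayed formula of Theorem \ref{teo:MultXiTODO} is precisely the formula of Proposition \ref{prop:Mgrande} (applied to $\cB^I$ with $M=M_I$), \emph{not} the refined one. You instead invoke Proposition \ref{prop:refined} and then try to argue that the two formulas agree term by term, claiming that ``adjoining $M_I e_i$ when $\mu e_i \in \cB_j^I$ for some $\mu$ already does not change the convex hull once $M_I$ exceeds all relevant exponents.'' This is false: if $\mu e_i\in\cB_j^I$ with $\mu<M_I$, adding $M_I e_i$ typically \emph{enlarges} the convex hull, and the individual mixed volumes $MV_{\#I}((\cB_j^I)^{M_I}_{j\notin J_I})$ and $MV_{\#I}((\cB_j^I\cup\{M_Ie_i\}_i)_{j\notin J_I})$ are in general different. (A one-variable example: $\cB=\{2\}$ gives $M_I=3$; then $(\cB)^{M_I}=\{2\}$ has length $0$ while $\cB\cup\{3\}=\{2,3\}$ has length $1$.) Only the \emph{differences} coincide, and they coincide because each separately equals $\mult_0(\bfg)$.

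The fix is to drop the detour entirely: cite Proposition \ref{prop:Mgrande} (not Proposition \ref{prop:refined}) for the mixed-volume formula, since its conclusion is verbatim the first display of the theorem with $M=M_I$, and then cite Corollary \ref{cor:mult0MI} (equivalently Theorem \ref{teo:MIcasoTocandoEjes} applied to $(\cB_j^I\cup\{M_Ie_i\}_i)_{j\notin J_I}$) for the mixed-integral formula. No reconciliation step is needed.
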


Note that the previous formula for multiplicities can be refined applying Proposition \ref{prop:refined} instead of Proposition \ref{prop:Mgrande}.

\subsection{Examples}

The following examples illustrate the result in the previous section.

\begin{example} Consider the generic polynomial system
$$ \left\{\begin{array}{l}
 c_{11} x_1^2 + c_{12}x_1^2x_2^2 + c_{13} x_1x_3 +c_{14} x_1x_2^2x_3 + c_{15} x_3^4 + c_{16}x_2^2x_3^4=0\\
 c_{21} x_1^4 + c_{22}x_1^4x_2^2 + c_{23} x_1^2x_3 +c_{24} x_1^2x_2^2x_3 + c_{25} x_3^4 + c_{26}x_2^2x_3^4=0\\
 c_{31} x_1 + c_{32}x_1x_2^2 + c_{33} +c_{34} x_2^2 + c_{35} x_3 + c_{36}x_2^2x_3=0\\
\end{array}\right.
$$
taken from \cite[Example 3]{HJS10}. There is a unique nonempty set $I = \{ 1, 3\}$ satisfying conditions (A1), (A2) and (A3), which leads to two isolated solutions with  $x_1=0$, $x_2
\ne 0$ and $x_3 = 0$. Since $J_I=\{3\}$, Theorem
\ref{teo:multF=multG} tell us that the multiplicity of each of these solutions equals the multiplicity of $(0,0)$ as an isolated root of a generic sparse system supported on $\mathcal{B}^I_1 = \{ (2,0), (1,1), (0,4)\}$ and $\mathcal{B}^I_2= \{ (4,0), (2,1),(0,4)\}$, namely a system of the type
$$\left\{\begin{array}{l} a_1x_1^2 + b_1x_1x_3 + c_1x_3^4 = 0 \\
a_2x_1^4 + b_2x_1^2x_3 + c_3x_3^4=0
\end{array}\right.$$

This multiplicity can be computed, by Proposition \ref{prop:mult0}, as $MV_2(\mathcal{B}^I_1\cup\{( 0,0) \}, \mathcal{B}^I_2\cup\{(0,0)\}) - MV_2(\mathcal{B}^I_1 , \mathcal{B}^I_2 ) = 7$ or, alternatively, by Theorem \ref{teo:MIcasoTocandoEjes}, as
$MI_2'(\overline{\rho}_1, \overline{\rho}_2)=7$, where
$\overline{\rho}_1$ and $\overline{\rho}_2$ are the functions whose graphs are given in Example \ref{ex2}.
\end{example}

\begin{example} Consider the generic polynomial system
$$\left\{\begin{array}{l}
 a_{11}x_1 + a_{12}x_1x_2 =0 \\
 a_{21}x_2^2 + a_{22}x_1^2x_2^4 + a_{23}x_1^3 =0\\
 a_{31}x_3 + a_{32}x_1x_3 + a_{33}x_3^2x_4^2 + a_{34}x_3^3x_4 =0\\
 a_{41}x_4^3 + a_{42}x_2^3x_4^3 + a_{43}x_3^2x_4^3 + a_{44}x_4^5 + a_{45}x_3^2x_4^5 =0\\
\end{array}\right.
$$
Using \cite[Proposition 5]{HJS13} we can check that all zeros of
the system are isolated. Moreover, all the subsets $I\subset \{1,2,3,4\}$ satisfying conditions (A1), (A2) and (A3) are
$$I_1=\emptyset, \ \ I_2 = \{3\}, \ \ I_3 = \{1,2\}, \ \ I_4 = \{3,4\}, \ \ I_5 = \{1,2,3\}
\mbox{ and } I_6 = \{1,2,3,4\}.$$
By Bernstein's theorem, the system has  24 different simple zeros with all non-zero coordinates
(associated to $I_1$) and, by Theorem \ref{teo:MultXiTODO}, we can see that there are
\begin{itemize}
\item $6$ simple zeros associated to $I_2$,
\item $8$ zeros with multiplicity $2$ associated to $I_3$,
\item $3$ zeros with multiplicity $3$ associated to $I_4$,
\item $2$ zeros with multiplicity $2$ associated to $I_5$,
\end{itemize}
and that the origin is an isolated zero of multiplicity $6$.

That is, the system has a total of $65$ (isolated) zeros counting multiplicities. Note that, in this case, $SM_4(\cA) =65$ is smaller than $MV_4(\cA\cup\{0\})= 85$.
\end{example}

\end{document}